\newtheorem{teo}{Theorem}[section]
\newtheorem{corollary}[teo]{Corollary}
\newtheorem{proposition}[teo]{Proposition}
\newtheorem{lemma}[teo]{Lemma}
\newtheorem*{strategy}{Strategy}
\theoremstyle{definition}
\newtheorem{definition}[teo]{Definition}
\newtheorem{remark}[teo]{Remark}
\newtheorem{example}[teo]{Example}
\newtheorem{examples}[teo]{Examples}
\newtheorem{notation}[teo]{Notation}
\newtheorem*{First Main Result}{First Main Result}
\newtheorem*{Second Main Result}{Second Main Result}
\newcommand{\N}{\mathbb{N}}
\newcommand{\cO}{\mathcal{O}}
\newcommand{\cQ}{\mathcal{Q}}
\newcommand{\cG}{\mathcal{G}}
\newcommand{\F}{\mathcal{F}}
\newcommand{\T}{\mathcal{T}}
\newcommand{\vdasho}{\vdash_o }
\renewcommand{\l}{\ell}
\newcommand{\la}{\lambda}
\newcommand{\Qd}[2]{\cQ_2^{( #1 )}( #2 )}
\renewcommand{\epsilon}{\varepsilon}
\DeclareMathOperator{\Irr}{Irr}
\newcommand{\fS}{{\mathfrak{S}}}
\newcounter{thmlistcnt}
	{\setcounter{thmlistcnt}{0}%
	\begin{list}{\emph{(\roman{thmlistcnt})}}{%
		\usecounter{thmlistcnt}%
		\setlength{\topsep}{0pt}%
		\setlength{\leftmargin}{27pt}%
		\setlength{\itemsep}{0pt}%
		\setlength{\labelwidth}{20pt}
		\setlength{\itemindent}{0pt}}%
	}%
	{\end{list}}%
\title[]{Hook removal operators on the odd Young graph}
\author{Christine Bessenrodt, Eugenio Giannelli and J\o rn B. Olsson}
\address[C.~Bessenrodt]{Institute for Algebra, Number Theory and Discrete Mathematics, Leibniz Universit\"at Hannover, Welfengarten 1, D-30167 Hannover, Germany}
\email{bessen@math.uni-hannover.de}
\address[E.~Giannelli]{Department of Pure Mathematics and Mathematical Statistics, University of Cambridge, Cambridge CB3 0WA, UK}
\email{eg513@cam.ac.uk}
\address[J.\,B.~Olsson]{Department of Mathematical Sciences, University of Copenhagen, DK-2100 Copenhagen \O,
Denmark} \email{olsson@math.ku.dk}
\begin{document}

\thanks{The second author's research is funded by Trinity Hall, University of Cambridge.}

\begin{abstract}
In this article we consider hook removal operators on \textit{odd} partitions, i.e., partitions labelling odd-degree irreducible characters of finite symmetric groups. 
In particular we complete the discussion, started in \cite{INOT} and developed in \cite{BGO}, concerning the commutativity of such operators.
\end{abstract}

\maketitle

\section{Introduction}   \label{sec:intro}

Let $n$ be a natural number and let $\chi$ be an irreducible character of odd degree of the symmetric group $\fS_n$. In \cite{INOT} it was shown that whenever $k$ is a non-negative integer such that $2^k\leq n$, then there exists a unique irreducible constituent $f_k^n(\chi)$ of $\chi_{\fS_{n-2^k}}$ of odd degree
appearing with odd multiplicity. This result (together with \cite[Theorem 1.1]{G}) was the key ingredient to give a purely algebraic interpretation of the canonical McKay bijection for symmetric groups first described combinatorially in \cite[Theorem 4.3]{GKNT}. 

Let $\mathrm{Irr}_{2'}(\fS_n)$ be the set of irreducible characters of $\fS_n$ of odd degree. 
The main aim of this article is to continue the study, begun in \cite{BGO}, of the maps
$$f_k^n:\Irr_{2'}(\fS_n)\longrightarrow\Irr_{2'}(\fS_{n-2^k}),\ \ \text{for all}\ \ n,k\in\mathbb{N}.$$
To ease the notation we will sometimes write $f_k$ instead of $f^n_k$ when the domain of the function is clear from the context.

A first natural question in order to understand the \textit{coherence} of these functions with respect to each other is: \textit{when does $f^{n-2^k}_kf^n_k=f_{k+1}^n$ hold?}  (in brief:  $f_kf_k=f_{k+1}$?).
We are able to fully answer this question. In order to formulate the precise statements of our theorems we need to introduce some specific combinatorial concepts and notation. This is done in details in Section~\ref{sec: kk,k+1}. Here we restrict ourselves to summarise our main results.

\begin{First Main Result}
Let $n$ and $k$ be natural numbers such that $2^{k+1}\leq n$. Then:

\textbf{(i)} We completely describe the partitions labelling those irreducible characters $\chi$ such that $f^{n-2^k}_kf^n_k(\chi)=f_{k+1}^n(\chi)$. (This description is obtained in Corollary~\ref{cor:Gk-all} together with Lemma~\ref{lem: goodcores} and Proposition~\ref{prop: bases}.)

\textbf{(ii)} As a corollary of \textbf{(i)}, we give an explicit formula for the number of such irreducible characters. (This is contained in Theorem~\ref{theo: G} and Corollary~\ref{cor:G0-count}.)

\end{First Main Result}

 Surprisingly, we are also able to show that when $n/2^k$ is \textit{small},
 we only need to check the value of $\chi$ on one specified conjugacy class
 to see whether $f_kf_k(\chi)=f_{k+1}(\chi)$ or not.
In fact, if $\lfloor n/2^k \rfloor\le 4$ then there is a specified element $g\in\fS_n$,  depending on $n$ and $k$, such that for all $\chi \in \Irr_{2'}(\fS_n)$ we have that $f_kf_k(\chi)=f_{k+1}(\chi)$ if and only if $|\chi(g)|=1$ (Theorem~\ref{ThmA}).

In the second half of the article we focus on the commutativity problem.
More precisely, given $\ell, k, n\in\mathbb{N}$ such that $2^k+2^\ell\leq n$ we aim to determine the set $\mathcal{T}_{k,\ell}(n)$ consisting of those partitions of $n$ that label irreducible characters $\chi$ of $\fS_n$ such that $f_kf_\ell(\chi)=f_\ell f_k(\chi)$. Again, we postpone the description of the necessary combinatorial framework to Sections~\ref{sec:commutativity} and~\ref{sec:basecases}. Here we summarize our results.

\begin{Second Main Result}
Let $\ell, k, n\in\mathbb{N}$ be such that $2^k+2^\ell\leq n$. Then:

\textbf{(i)} We completely describe the partitions labelling the irreducible characters contained in $\mathcal{T}_{k,\ell}(n)$. (In Lemma~\ref{lem: reducedcount} we first reduce the problem to the case $k=0$. The case $k=0$ is fully analysed in Section~\ref{sec:basecases}.)

\textbf{(ii)} As a corollary of \textbf{(i)}, we give an explicit formula for $|\mathcal{T}_{k,\ell}(n)|$. (This is done in Theorem~\ref{theo: B} together with Corollary~\ref{cor: countfinal}.)
\end{Second Main Result}

We remark that our Second Main Result concludes the study of commutativity of the maps $f_k$. This work begun in \cite{INOT} and was subsequently developed in \cite{BGO}.
The characters of odd degrees on which we focus in this article are labelled by partitions which we call odd partitions; thus we focus on a subgraph of the Young graph which we call the odd Young graph. The hook removal operators that we discuss here act on this odd Young graph, and the main theme of this article is the relationship of the behavior of these operators on this graph.

\section{Background}\label{sec:background}

We start by recalling some facts in the representation theory of symmetric groups. We refer the reader to \cite{James}, \cite{JK} or \cite{OlssonBook} for a more detailed account.

\subsection{Partitions and hooks}

A partition $\lambda=(u_1,u_2,\dots,u_\ell)$ is a finite weakly decreasing sequence of positive integers. We say that $\lambda$ is a partition of $|\lambda|=\sum u_i$, written $\lambda\vdash|\lambda|$ and call the $u_i$'s the {\it parts} of $\la$.
The Young diagram of $\lambda$ is the set $$[\lambda]=\{(i,j)\in{\mathbb N}\times{\mathbb N}\mid 1\leq i\leq\ell,1\leq j\leq u_i\},$$
where we
think of the diagram in matrix orientation,
with the node $(1,1)$ in the upper left corner.
Given $(r,c)\in[\lambda]$, the corresponding {\it hook} $H_{(r,c)}(\lambda)$ is the set defined by $$H_{(r,c)}(\lambda)=\{(r,y)\in [\lambda]\ |\ y\geq c\}\cup\{(x,c)\in [\lambda]\ |\ x\geq r\}.$$
We let $h_{r,c}(\lambda)=|H_{(r,c)}(\lambda)|$ be the {\it length} of the hook, and we denote by $\mathcal{H}(\lambda)$ the set of hooks in $\lambda$.
When $h=H_{(r,c)}(\lambda)$ is a hook of $\lambda$, we say that we remove it from $\lambda$ by removing the nodes on the rim of the Young diagram,  starting at the hand $(r,u_r)$ and ending at the foot of column $c$; the partition thus obtained is denoted by $\lambda\setminus h$.
Let $e\in\mathbb{N}$.  A hook of length $e$ in $\la$ is called an  $e$-hook of $\lambda$ and
we denote by $\mathcal{H}_e(\lambda)$ the set of hooks in $\lambda$ whose lengths are divisible by $e$.

\subsection{Odd degree characters and odd partitions}

Let $n$ be a natural number. We let $\Irr(\fS_n)$ denote the set of irreducible characters of~$\fS_n$ and $\mathcal{P}(n)$ the set of partitions of $n$.
There is a natural correspondence $\lambda \leftrightarrow \chi^{\lambda} $ between $\mathcal{P}(n)$ and $\Irr(\fS_n)$. In this case we say then that $\lambda$ labels $\chi^{\lambda}$.
We denote by $\Irr_{2'}(\fS_n)$ the set of irreducible characters of $\fS_n$ of odd degree. If $\chi^{\lambda} \in \Irr_{2'}(\fS_n)$ we say that $\chi^{\lambda}$ is an {\it odd character},
we call $\lambda$ an {\it odd partition} of~$n$ and write $\lambda\vdash_o n$.
We denote by $\cO(n)$ the set of odd partitions of $n$; by convention,
$\cO(0)=\{\emptyset\}$.

The following key result was proved in \cite[Theorem A and Proposition 4.2]{INOT}, generalizing the beautiful result obtained in \cite[Theorem 1]{APS}.

\begin{teo}\label{theo: INOT}
Let $n,k$ be such that $2^k\le n$ and let $\lambda\vdash_o n$. Then there exists a unique $2^k$-hook $h$ in $\lambda$ such that the partition
$\mu=\lambda\setminus h$ obtained by removing $h$ from $\lambda$ is an odd partition of $n-2^k$. Furthermore, we then have $f_k^n(\chi^\lambda)=\chi^\mu$.
\end{teo}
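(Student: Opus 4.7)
I would prove the two assertions of the theorem separately, by induction on $k$, with base case $k = 0$ given by the APS theorem \cite{APS}.

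For the combinatorial assertion --- existence and uniqueness of a $2^k$-hook $h$ in $\lambda$ with $\lambda \setminus h$ odd --- the main tool is the $2$-core tower of $\lambda$ together with Macdonald's characterization of odd partitions: $\lambda \vdasho n$ if and only if the $2$-core tower has the shape dictated by the binary expansion of $n$. Since a $2^k$-hook in $\lambda$ corresponds, via iterated $2$-quotient, to a single node in one of the $2^k$ partitions at level $k$ of the tower (with the $2$-core and all remaining tower data preserved outside that node), removing a $2^k$-hook amounts to removing one node from some level-$k$ partition. The question then becomes: which such removals yield a valid odd tower for $n - 2^k$? The base case $k = 0$ is \cite[Theorem 1]{APS}; for larger $k$, the bijection between $2^k$-hooks of $\lambda$ and $2^{k-1}$-hooks of one component of its $2$-quotient $(\lambda^{(0)}, \lambda^{(1)})$ reduces the problem to a strictly smaller instance of the combinatorial claim, pinning down the unique admissible hook.

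For the representation-theoretic identification $f_k^n(\chi^\lambda) = \chi^\mu$, I need to show that $\chi^\mu$ is the unique odd-degree constituent of $\chi^\lambda \downarrow_{\fS_{n-2^k}}$ appearing with odd multiplicity. For $k = 0$ this is immediate from the classical branching rule $\chi^\lambda \downarrow_{\fS_{n-1}} = \sum_{\mu} \chi^\mu$, summed over partitions $\mu$ obtained from $\lambda$ by removing a single node (all multiplicities equal $1$), combined with the uniqueness from part~(a). For $k \ge 1$, the Littlewood--Richardson rule gives
$$\langle \chi^\lambda \downarrow_{\fS_{n-2^k}}, \chi^\mu \rangle = \sum_{\nu \vdash 2^k} c^\lambda_{\mu,\nu} \, \chi^\nu(1),$$
so modulo $2$ only those $\nu \vdash 2^k$ with $\chi^\nu(1)$ odd contribute, and the goal becomes to show that this sum is odd precisely for $\mu = \lambda \setminus h$.

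The main obstacle is the parity analysis of this multiplicity for $k \ge 1$: a direct Littlewood--Richardson computation is unwieldy, so I would pass through an abacus / $2$-quotient reformulation that transports the multiplicity question one level deeper in the $2$-core tower, where the inductive hypothesis applies. Controlling the cancellations modulo $2$ at that lower level --- and ensuring that the odd-multiplicity odd-degree constituent propagates correctly through the recursion --- is the technical heart of the argument.
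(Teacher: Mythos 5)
First, a point of reference: the paper does not prove this statement itself --- it is quoted verbatim from \cite[Theorem A and Proposition 4.2]{INOT} --- so there is no in-paper proof to compare against, and your proposal has to stand on its own. Your combinatorial half is sound in outline: the identification of $2^k$-hooks with nodes at level $k$ of the quotient tower, Macdonald's parity criterion (Theorem~\ref{thm:oddcriterion} in the paper), and the reduction through the $2$-quotient are exactly the right tools, and the omitted bookkeeping (that the $2$-disjointness of the component sizes forces both the component and, via the $k=0$ case of \cite{APS}, the node within it) does go through.

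The genuine gap is in the representation-theoretic half, and it sits precisely where you locate ``the technical heart.'' Having written the multiplicity as $\sum_{\nu\vdash 2^k} c^{\lambda}_{\mu,\nu}\,\chi^{\nu}(1)$, you propose to control its parity by ``transporting the multiplicity question one level deeper in the $2$-core tower''; but restriction to $\fS_{n-2^k}$ does not interact in any straightforward way with the $2$-quotient (the subgroup is not compatible with that structure), so it is not clear this recursion can even be set up, and you give no indication of how the mod~$2$ cancellations would be controlled. In fact no recursion is needed at that point: the odd partitions of $2^k$ are exactly the hooks $(2^k-j,1^j)$, and \emph{every} hook of $2^k$ has odd degree $\binom{2^k-1}{j}$, so modulo $2$ the multiplicity equals $\sum_{j=0}^{2^k-1} c^{\lambda}_{\mu,(2^k-j,1^j)} \equiv \sum_{j}(-1)^j c^{\lambda}_{\mu,(2^k-j,1^j)} = \langle s_{\lambda/\mu}, p_{2^k}\rangle$, which by the Murnaghan--Nakayama rule is $\pm 1$ if $\lambda/\mu$ is a single $2^k$-rim hook and $0$ otherwise. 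Hence the odd-multiplicity constituents of $\chi^\lambda\big\downarrow_{\fS_{n-2^k}}$ are exactly the $\chi^{\lambda\setminus h}$ for $h$ a $2^k$-hook of $\lambda$, and your combinatorial half then singles out the unique odd one among them. Without this (or an equivalent) parity identity, the second assertion of the theorem is not established, so as written the proposal is incomplete.
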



In the setting of Theorem~\ref{theo: INOT} we abuse notation and consider $f_k^n$ also as a map between the corresponding sets of partitions $\mathcal{O}(n)$ and $\mathcal{O}(n-2^k)$. In particular we will write $f_k^n(\lambda)=\mu$.

\begin{definition}\label{def: odd-position}
Let $n\in \N$ and  $\lambda\vdash_o n$.
We call a hook $h$ in $\lambda$ {\it odd} if $\lambda\setminus h$ is again an odd partition.
If $2^k \le n$, we let $z_k(\la)$ and $s_k(\la)$ be the integers satisfying that $H_{z_k(\la),s_k(\la)}(\la)$ is the unique odd $2^k$-hook in $\la$. Also let $r_k(\la)=h_{z_k(\la),1}(\la)$, a first-column hook length.

\end{definition}

\subsection{Abacus configurations, $e$-cores, $e$-quotients}

As discovered by G. James, all of the operations on partitions concerning addition and removal of $e$-hooks  may conveniently be recorded on an $e$-{\it abacus}.
This is described in detail in \cite[Section 2.7]{JK}. Since we are going to use abaci in some proofs we introduce our modification of this important object, which may somehow be more flexible.

An $e$-abacus consists of an odd number of rows, say $2a+1$, labelled (from top to bottom) by the integers $-a,-a+1,\ldots, a-1,a$, and $e$ runners (or columns) labelled (from left to right) by $0,1, \ldots, e-1$. For any $(x,y)\in\{-a,-a+1,\ldots, a-1,a\}\times \{0,1, \ldots, e-1\}$ we can place a bead in position $(x,y)$ of the abacus.
Any distribution of beads on an $e$-abacus is called an {\it $e$-abacus configuration}.
Note that an $e$-abacus contains only finitely many beads in an $e$-abacus configuration. In all instances we choose the number of beads and rows large enough to perform all of the necessary operations.

A {\it gap} in an  $e$-abacus configuration $A$ is an empty position in $A$, i.e., a position not containing a bead.
Numbering the positions in $A$ from 0 to $e(2a+1)-1$, going from left to right and from top to bottom, we call the gap with the lowest position number the {\it first gap}.
Listing the numbers of the positions with beads in $A$ gives a set of non-negative integers, i.e., {\it a sequence of $\beta$-numbers} in the notation of \cite[Section 2.7]{JK}.
This is a generalization of a sequence of first-column hook lengths and uniquely identifies a partition $\lambda$.
In fact, if $c_1, c_2,\cdots,c_s$ are the beads on $A$ after the first gap, then the differences between the position numbers of these beads
and the position number of the first gap gives the first-column hook lengths $h_{(i,1)}(\la)$ of a unique partition $\la$. In this case we say that $A$ is an {\it $e$-abacus configuration} \textit{for} $\lambda$ (later we mostly call it simply an $e$-abacus  for $\lambda$).
Thus for a fixed $a$ the $e$-abacus configurations for a given $\lambda$ differ only by the number of beads before the first gap, i.e., by the position of the first gap. In particular, different $e$-abacus configurations for $\la$ have different numbers of beads.

We define a {\it normalized $e$-abacus configurations for $\la$} as one having its first gap in position $(0,0)$.

By \cite[2.7.13]{JK} the removal of an $e$-hook in $\la$ is recorded in any sequence of $\beta$-numbers for $\la$ by reducing one of its numbers by $e$.

We collect some of the above facts in a lemma for later reference.

\begin{lemma}\label{abacus-facts} Let $A$ be an $e$-abacus configuration for $\lambda$ consisting of $m$ beads.
\begin{enumerate}
\item[{(1)}] The positions of the beads after the first gap in $A$ determine the first-column hook lengths of $\la$.
Each of these beads contribute with a part of $\la$ which is equal to the number of gaps before the bead.

\item[{(2)}] The removal or addition of an $e$-hook is recorded on the $e$-abacus by sliding a bead one step up or down to an empty position on a runner of $A$.

\item[{(3)}] If  $A$ is normalized and if $|\la| < e$ then all beads after the first gap are in row $0$ of~$A$.
If there are $s$ such beads then their positions are $(0,h_{(i,1)}(\la))$ for $1 \le i \le s$.

\item[{(4)}] Let $B$ be an $e$-abacus configuration for $\lambda$. If $B$ consists of $m$ beads, then $A=B$.

\end{enumerate}
\end{lemma}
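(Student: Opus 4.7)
The plan is to verify items (1)--(4) as direct consequences of the partition/$\beta$-number correspondence recalled just before the lemma, together with the standard abacus calculus from \cite[Section~2.7]{JK}. None of the four items is deep; the work amounts to careful bookkeeping with bead positions on $A$.

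For (1), I would list the beads of $A$ in position-increasing order as $b_1<b_2<\cdots<b_m$ and let $g$ be the position of the first gap. By the definition of ``first gap'', every position in $\{0,\ldots,g-1\}$ contains a bead, so those beads are exactly $b_1,\ldots,b_g$; hence the beads after the first gap are $b_{g+1},\ldots,b_m$, and the defining paragraph of an $e$-abacus configuration for $\la$ identifies $b_{g+j}-g$ (for $1\le j\le\ell(\la)$) with a first-column hook length of $\la$, with $m-g=\ell(\la)$. A short count, using $h_{(i,1)}(\la)=u_i+\ell(\la)-i$, then shows that the number of gaps strictly before the bead at $b_{g+j}$ equals $b_{g+j}-(g+j-1)=u_{\ell(\la)-j+1}$, which is the claimed part of $\la$.

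Item (2) is exactly \cite[2.7.13]{JK}: reducing one $\beta$-number by $e$ corresponds to sliding a bead one row up on its runner to an empty position, and symmetrically for addition. For (3), the bound $h_{(1,1)}(\la)=u_1+\ell(\la)-1\le|\la|<e$ forces every first-column hook length to lie in $\{1,\ldots,e-1\}$; combined with (1) applied to the normalized abacus (where $g=0$), the beads after the first gap are precisely those at the positions $(0,h_{(i,1)}(\la))$ in row~$0$. For (4), the equality $g=m-\ell(\la)$ derived in the plan for (1) shows that the first gap is uniquely determined by $\la$ and $m$; the remaining beads must then sit at $g+h_{(i,1)}(\la)$, so $B$ is forced to equal $A$. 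The only (mild) obstacle throughout is keeping the row indexing (top-to-bottom) consistent with the bead ordering (increasing position, which reverses the row order); once that is fixed, each item is just an unpacking of the standard abacus dictionary.
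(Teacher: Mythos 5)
Your verification is correct, and it follows the same route the paper intends: the paper gives no proof of this lemma, presenting it as a collection of facts immediate from the $\beta$-number/abacus dictionary set up in the preceding paragraphs and in [JK, Section 2.7], which is exactly what you unpack. The only cosmetic point is that in the paper's global position numbering the first gap of a normalized configuration sits at position number $ae$ rather than $0$, but since you measure everything relative to the first gap this does not affect any of your conclusions.
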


\begin{definition}\label{def: leftslide1}
Given an $e$-abacus configuration $A$ and a bead $c$ in $A$, we denote by $A(y, \leftarrow c)$ the $e$-abacus obtained by moving bead $c$ to its left $y$ times, ending in a gap. More precisely, the position number of $c$ in the abacus is reduced by $y$. Here we use \textit{left} in a broad sense. In fact, in principle we could end up in a gap in a higher row. (This will not happen in our applications later in the article).
\end{definition}

\begin{definition}\label{def: leftslide}
Let $n$ and $k$ be natural numbers such that $2^k\leq n$ and let $\lambda\vdasho n$.
Let $e$ be a natural number and let $A$ be an $e$-abacus configuration for $\lambda$. We denote by $c_{k}(A)$ the unique bead in $A$ such that $A(2^k, \leftarrow c_k(A))$ is an $e$-abacus configuration for $f_k(\lambda)$.
\end{definition}

We note that Theorem~\ref{theo: INOT} guarantees that the $c_{k}(A)$'s in  Definition~\ref{def: leftslide} are well defined.
The following is a consequence of Lemma~\ref{abacus-facts}(3).

\begin{lemma}\label{odd-hook-notations}
Let $2^k \le n < e$ and $\lambda\vdasho n$. Let $A$ be a normalized $e$-abacus configuration for $\lambda$. Then the bead $c_k(A)$ is in position $(0,r_k(A))$ for some $r_k(A) \in \{1, \ldots, n\}$. In fact, in the notation of Definition~\ref{def: odd-position} we have $r_k(A)=h_{z_k(\la),1}(\la)=r_k(\la)$, i.e., $r_k(A)$ is a
first-column hook length of $\la$.
\end{lemma}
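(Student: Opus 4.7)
The plan is to exploit Lemma~\ref{abacus-facts}(3): since $A$ is a normalized $e$-abacus for $\lambda$ and $|\lambda|=n<e$, every bead after the first gap lies in row $0$, at positions $(0,h_{(i,1)}(\lambda))$ for $1\le i\le \ell(\lambda)$. In particular, $c_k(A)=(0,y)$ for some $y\ge 1$, and I set $r_k(A):=y$. The first task is to identify $y$ as a first-column hook length; this is automatic from the above observation, since the bead at $(0,y)$ corresponds to some row $i$ of $\lambda$ with $y=h_{(i,1)}(\lambda)$.

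Next I would pin down the range of $y$. The rows $-a,\ldots,-1$ of the normalized configuration are completely filled, so sliding the bead $c_k(A)$ from position number $ae+y$ to $ae+y-2^k$ lands either on the first gap $(0,0)$ (when $y=2^k$) or on a position $(0,y-2^k)$ in row $0$ with $y>2^k$, which is then necessarily a gap. The case $y<2^k$ is impossible, since the target would lie in the fully-occupied block of positions $0,\ldots,ae-1$. Hence $y\ge 2^k\ge 1$. For the upper bound, $y=h_{(i,1)}(\lambda)\le h_{(1,1)}(\lambda)=\lambda_1+\ell(\lambda)-1\le n$, giving $r_k(A)\in\{1,\ldots,n\}$.

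The key identification $r_k(A)=r_k(\lambda)$ uses the abacus-to-hook dictionary from \cite[2.7.13]{JK}: reducing the $\beta$-number at $(0,h_{(i,1)}(\lambda))$ by $2^k$ records the removal of a $2^k$-hook from $\lambda$ whose row index is exactly $i$ (the row of $\lambda$ associated with that bead). By Definition~\ref{def: leftslide}, the resulting configuration represents $f_k(\lambda)$, so the $2^k$-hook thus removed is the one that takes $\lambda$ to $f_k(\lambda)$. By Theorem~\ref{theo: INOT} and Definition~\ref{def: odd-position}, this hook must be the unique odd $2^k$-hook $H_{(z_k(\lambda),s_k(\lambda))}(\lambda)$; comparing row indices gives $i=z_k(\lambda)$, so $r_k(A)=y=h_{(z_k(\lambda),1)}(\lambda)=r_k(\lambda)$.

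The only step that is not purely bookkeeping is the identification in the last paragraph that the row of the slid bead matches $z_k(\lambda)$; this is precisely the correspondence between beads in row $0$ and rows of $\lambda$ recorded in Lemma~\ref{abacus-facts}(1), combined with the uniqueness clause in Theorem~\ref{theo: INOT}. Everything else follows from the normalization hypothesis and the assumption $n<e$.
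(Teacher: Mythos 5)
Your proof is correct and follows exactly the route the paper intends: the paper offers no written proof beyond the remark that the lemma "is a consequence of Lemma~\ref{abacus-facts}(3)", and your argument is precisely the fleshing-out of that remark (beads after the first gap sit in row $0$ at first-column hook lengths, the slid bead must be one of these since the earlier rows are full, and uniqueness in Theorem~\ref{theo: INOT} identifies its row with $z_k(\lambda)$).
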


Let $\lambda$ be a partition of $n$ and let $A$ be an $e$-abacus configuration for $\lambda$. Denote by $A_0$, $A_1$, $\ldots$, $A_{e-1}$ the runners in $A$ from left to right.  For $j\in\{0,\ldots, e-1\}$, denote by $|A_j|$ the number of beads on runner $j$. Moreover, we denote by $A^{\uparrow}$ the $e$-abacus obtained from $A$ by sliding all beads on each runner as high as possible. Similarly, we denote by $A_0^{\uparrow},\ldots, A_{e-1}^{\uparrow}$ the runners of $A^{\uparrow}$.
Then $A^{\uparrow}$ is an $e$-abacus for a partition $C_{e}(\lambda)$, known as the {\it $e$-core of $\la$}. Using Lemma~\ref{abacus-facts}(2) repeatedly, we see that $C_{e}(\lambda)$ does not contain an $e$-hook and is obtained from $\la$ be removing a number of $e$-hooks (say $w_e(\la)$ $e$-hooks) from~$\la$. We call $w_e(\la)$ the {\it $e$-weight} of~$\lambda$.

Then $w_e(\la)=w_e(A)$,  where $w_e(A)$ denotes the total number of up-slides needed to obtain $A^\uparrow$ from $A$. Similarly, for $i\in\{0,\ldots, e-1\}$ let $w_e(A_i)$ denote the number of those slides that were performed on runner $i$ in the transition from $A$ to $A^\uparrow$. Thus we have
 $$w_e(\lambda)=w_e(A)=w_e(A_0)+\ldots +w_e(A_{e-1}).$$
For $j\in\{0,\ldots, e-1\}$, denote by  $\mu_j=\mu_j(A)$  the partition corresponding to the runner~$A_j$ (considered as a $1$-abacus configuration). We then obviously have that $w_e(A_j)=|\mu_j|$.

\begin{remark}\label{rem: abacus}
Suppose that the $e$-abacus configuration  $A$ for the partition $\la$ of $n$ consists of $m$ beads.
The $e$-{\it quotient} $Q_{e}(\lambda)=(\lambda_0,\ldots, \lambda_{e-1})$ of $\la$, as defined in \cite[Section 2.7]{JK} and \cite[Section 3]{OlssonBook} agrees with $(\mu_0(A),\ldots, \mu_{e-1}(A))$ if $e \mid m$.  If $e \nmid m$ then  $(\mu_0,\ldots, \mu_{e-1})$ is just a $m'$-cyclic shift of $(\lambda_0,\ldots, \lambda_{e-1})$. Here $m'$ is the smallest positive integer, which is congruent to $m$ modulo $e$. In particular, $Q_{e}(\lambda)$ is also determined by $A$.
\end{remark}

 It is  well-known that a partition is uniquely determined by its $e$-core and $e$-quotient (see e.g.\ \cite[Proposition 3.7]{OlssonBook}). This fact may also be deduced from the discussion above.

\begin{remark}\label{compatible-hooks}
Let $\mathcal{H}(Q_e(\lambda))=\cup_{i=0}^{e-1} \mathcal{H}(\lambda_{i})$ (considered as multisets). As explained in \cite[Theorem 3.3]{OlssonBook}, there is a bijection between $\mathcal{H}_e(\lambda)$ and $\mathcal{H}(Q_e(\lambda))$ mapping hooks in $\lambda$ of length $ex$ to hooks in the quotient of length~$x$.
Moreover, the bijection respects the process of hook removal. In particular we conclude that
$$|\mathcal{H}_e(\lambda)|=w_e(\la) \; \text{ and } \; |\la|=|C_{e}(\lambda)|+ew_e(\la).$$
\end{remark}

\smallskip
For later use we also introduce the following object
when $e=2$.  We denote by $\mathcal{Q}_2(\lambda)$
the {\it $2$-quotient tower} and by $\mathcal{C}_2(\lambda)$ the {\it $2$-core tower}
of a partition~$\lambda$. These have rows numbered by $k \ge 0$.  The $k$th row  $\mathcal{Q}^{(k)}_2(\lambda)$ of $\mathcal{Q}_2(\lambda)$
contains $2^k$ partitions $\lambda^{(k)}_i$, $1 \le i \le 2^k$, and the $k$th row $\mathcal{C}^{(k)}_2(\lambda)$ of   $\mathcal{C}_2(\lambda)$ contains the $2$-cores of these partitions in the same order, i.e., $C_2(\lambda^{(k)}_i)$,  $1 \le i \le 2^k$. The 0th row of   $\mathcal{Q}_2(\lambda)$ contains $\lambda=\lambda^{(0)}_1$ itself,
row~1 contains the partitions $\lambda^{(1)}_1, \lambda^{(1)}_{2}$
occurring in the $2$-quotient $Q_2(\lambda)$, row~2 contains the partitions occurring in the $2$-quotients of partitions occurring in row~1,  and so on.
Specifically we have  $Q_2(\lambda^{(k)}_i)=(\lambda^{(k+1)}_{2i-1},\lambda^{(k+1)}_{2i})$ for $i \in \{1,\ldots,2^k\}$.
We remark that the $2^k$ partitions in $\mathcal{Q}^{(k)}_2(\lambda)$
are the same as those in the $2^k$-quotient $Q_{2^k}(\lambda)$ of $\lambda$, but in a different order for $k\ge 2$.
We also denote by $\mathcal{D}^{(k)}_2(\lambda)$ the {\em $k$-data} of~$\lambda$. This table (first introduced in \cite{BGO}) consists of $k+1$ rows.  For all $j\in\{0,\dots, k-1\}$ we have that row $j$ coincides with $\mathcal{C}^{(j)}_2(\lambda)$; the $(k+1)$th (last) row equals $\mathcal{Q}^{(k)}_2(\lambda)$.

\smallskip

\section{Preliminary results}\label{sec:prelims}

Having collected the combinatorial background material,
we now turn again to characters and prove some preliminary results that will
turn out to be useful in the later sections.

\subsection{Character values and odd partitions}

We apply the Murnaghan-Nakayama formula  (in short the MN-formula; see \cite [2.4.7]{JK}) for some easy but useful characterizations of odd partitions.

Suppose that $\chi^{\la} \in \Irr(\fS_n)$ and that $\pi \in \fS_n$ is a 2-element.
Then $\chi^{\la}(1)$ and $\chi^{\la}(\pi)$ have the same parity (see \cite[(6.4)]{Feit}).
In particular odd characters have odd values on all 2-elements,
and 
a character is odd
if and only if it takes odd values on some (non-empty) set of 2-elements.

The 2-powers in the binary expansion of numbers under consideration will play a crucial role.
When $2^a$ appears in such an expansion of $n$, we say that $2^a$ is a \textit{binary digit} of $n$ and write
$2^a\subseteq_2 n$.
Furthermore, we say that two natural numbers are \textit{$2$-disjoint} if they do not have any common binary digit.

We fix some notation for the following results.
We write $n=2^{a_1}+ \ldots + 2^{a_r}$ where $ a_1> \ldots >a_r \ge 0$.
Let $\omega_n$ be an element in $\fS_n$ of cycle type $(2^{a_1},2^{a_2}, \ldots ,2^{a_r})$.
A {\it 2-chain} in $\la \vdash n$ is a sequence $\la=\la^1,\ldots , \la^r$ where $\la^{i+1}$ is obtained from $\la^i$ by removing a $2^{a_i}$-hook.
If it exists, a 2-chain in $\la$ is unique.
We note that, as an immediate consequence of the main result in~\cite{B},
the number of partitions of $n$ that have a 2-chain is exactly $\prod_{i=1}^r 2^{a_i}$.

\smallskip

A part of the following result appeared in \cite[Lemma 1]{APS} with a different proof;
the criterion in part (3) will be applied repeatedly in later sections.

\begin{lemma}\label{lem:maxhook}
Let $\la \vdash n$. Let $\pi$ be a $2^{a_1}$-cycle in $\fS_n$.
\begin{enumerate}
\item[{(1)}]
If $\la$ contains a $2^{a_1}$-hook $h$, and
$\gamma \in \fS_n$ acts on the fixed points of $\pi$, then
$|\chi^{\la}(\pi \gamma)| =|\chi^{\la\setminus h}(\gamma)|$.

\item[{(2)}]
We have $\chi^{\la}(\pi) \ne 0$ if and only if $\la$ contains a $2^{a_1}$-hook $h$.
In that case $\chi^{\la}(1)$ and  $\chi^{\la\setminus h}(1)$ have the same parity.

\item[{(3)}] The partition $\la$ is odd if and only if $\la$ contains a $2^{a_1}$-hook $h$ such that $\la \setminus h$ is odd.

\item[{(4)}]\label{lem:chain-value}
If $\la$ contains a 2-chain then $|\chi^{\la}(\omega_n)|=1$.
\end{enumerate}
\end{lemma}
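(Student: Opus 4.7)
The plan is to prove (1)--(4) in order, with (1) doing the main work.

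For (1), the crucial observation is that since $a_1$ is the largest binary digit of $n$, we have $n<2^{a_1+1}$, and hence the $2^{a_1}$-weight of every $\lambda\vdash n$ satisfies $w_{2^{a_1}}(\lambda)\le 1$. By Remark~\ref{compatible-hooks} this translates into $|\mathcal{H}_{2^{a_1}}(\lambda)|\le 1$; moreover, since $|\lambda|<2\cdot 2^{a_1}$ no hook of $\lambda$ of length divisible by $2^{a_1}$ can have length strictly larger than $2^{a_1}$, so the set $\mathcal{H}_{2^{a_1}}(\lambda)$ consists of $2^{a_1}$-hooks only. Thus whenever $\lambda$ contains a $2^{a_1}$-hook $h$, this $h$ is unique. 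Because $\gamma$ is supported on the fixed points of $\pi$, the Murnaghan--Nakayama formula applied to the $2^{a_1}$-cycle $\pi$ collapses to the single summand $\chi^{\lambda}(\pi\gamma)=\pm\,\chi^{\lambda\setminus h}(\gamma)$; taking absolute values yields (1).

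For (2), specialize (1) to $\gamma=1_{\fS_{n-2^{a_1}}}$: the MN-sum is empty (so $\chi^{\lambda}(\pi)=0$) when $\lambda$ has no $2^{a_1}$-hook, while otherwise $\chi^{\lambda}(\pi)=\pm\,\chi^{\lambda\setminus h}(1)\ne 0$. The parity statement follows from the fact recalled at the start of Section~\ref{sec:prelims} that $\chi^{\lambda}(\pi)\equiv\chi^{\lambda}(1)\pmod{2}$ for the $2$-element $\pi$: the signed identity above then forces $\chi^{\lambda}(1)\equiv\pm\,\chi^{\lambda\setminus h}(1)\pmod{2}$. Part (3) is an immediate translation of (2): $\lambda$ is odd iff $\chi^{\lambda}(1)$ is odd iff $\chi^{\lambda}(\pi)$ is odd and therefore non-zero, which by (2) forces the existence of a $2^{a_1}$-hook $h$, and then the parity clause of (2) transfers oddness between $\lambda$ and $\lambda\setminus h$ in both directions.

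Finally, for (4), I would iterate (1) along the $2$-chain. Write $\omega_n=\pi_1\pi_2\cdots\pi_r$ as a product of disjoint cycles of lengths $2^{a_1},\ldots,2^{a_r}$. At stage $i$ the $2$-chain supplies a $2^{a_i}$-hook of $\lambda^{i}$, and applying (1) with $\pi=\pi_i$ and $\gamma=\pi_{i+1}\cdots\pi_r$ (supported on the fixed points of $\pi_i$) yields the recursion $|\chi^{\lambda^{i}}(\pi_i\cdots\pi_r)|=|\chi^{\lambda^{i+1}}(\pi_{i+1}\cdots\pi_r)|$. Iterating until no cycles remain leaves $|\chi^{\lambda}(\omega_n)|=|\chi^{\emptyset}(1)|=1$; this iteration is legal because iterating (3) shows that a partition admitting a $2$-chain is odd at every stage, so the required hook is always available. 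The only real obstacle is the hook-uniqueness step in (1); the rest of the lemma is routine.
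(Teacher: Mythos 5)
Your proof is correct and follows essentially the same route as the paper: parts (1)--(3) are the Murnaghan--Nakayama formula plus the parity congruence $\chi^\la(\pi)\equiv\chi^\la(1)\pmod 2$ for $2$-elements, and (4) is the iteration along the $2$-chain. The one detail you make explicit that the paper leaves implicit --- that $n<2^{a_1+1}$ forces $w_{2^{a_1}}(\la)\le 1$, so the MN-sum collapses to a single term --- is exactly the right justification for the clean identity in (1).
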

 \begin{proof}
(1) is an instance of the MN-formula.

(2) 
By (1), $\chi^{\la}(\pi) \neq 0$ if and only if
$\la$ contains a $2^{a_1}$-hook $h$.
If this is the case then
$\chi^{\la}(\pi) =\pm \chi^{\la\setminus h}(1)$,
so $\chi^{\la}(1)$ and  $\chi^{\la\setminus h}(1)$ have the same parity.

(3) If $\la$ is odd, then  $\chi^{\la}(\pi)$ is odd,
so that $h$ exists and $\la \setminus h$ is odd by (2).
If $h$ exists and $\la \setminus h$ is odd, then $\la$ is odd, by (2).

(4) is obtained by repeated application of (2).
\end{proof}

The next result is implicit in McKay's paper \cite{McKay}, where he introduced his famous conjecture.

\begin{lemma} \label{lem:odd-value}
Let $\la \vdash n$. The following statements are equivalent:
\begin{enumerate}
\item[{(i)}]  $\la$ is odd.

\item[{(ii)}]  $\la$ contains a 2-chain.

\item[{(iii)}]  $|\chi^{\la}(\omega_n)|=1$.
\end{enumerate}
\end{lemma}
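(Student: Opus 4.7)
The plan is to prove the equivalence by the cycle (i) $\Rightarrow$ (ii) $\Rightarrow$ (iii) $\Rightarrow$ (i), with (ii) $\Rightarrow$ (iii) already in hand as Lemma~\ref{lem:maxhook}(4) and (iii) $\Rightarrow$ (i) a quick consequence of the Feit parity remark. The only real content is (i) $\Rightarrow$ (ii), which should follow by induction on $r$, the number of binary digits of $n$, using Lemma~\ref{lem:maxhook}(3) as the inductive step.

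In detail, for (i) $\Rightarrow$ (ii) I would argue as follows. Write $n = 2^{a_1} + \ldots + 2^{a_r}$ with $a_1 > \ldots > a_r \geq 0$, and suppose $\lambda \vdasho n$. The base case $r=1$ is trivial, since then $n = 2^{a_1}$ and the single-term sequence $\lambda^1=\lambda$ is a 2-chain. For the inductive step, since $\lambda$ is odd, Lemma~\ref{lem:maxhook}(3) yields a $2^{a_1}$-hook $h$ in $\lambda$ such that $\lambda^2 := \lambda \setminus h$ is odd. Now $\lambda^2 \vdasho n - 2^{a_1} = 2^{a_2} + \ldots + 2^{a_r}$, whose binary expansion has exactly $r-1$ terms $2^{a_2} > \ldots > 2^{a_r}$ (these are 2-disjoint from $2^{a_1}$, so no carrying occurs). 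By the inductive hypothesis, $\lambda^2$ admits a 2-chain $\lambda^2,\ldots,\lambda^r$ with $\lambda^{i+1}$ obtained from $\lambda^i$ by removing a $2^{a_i}$-hook; prepending $\lambda$ gives the desired 2-chain for $\lambda$.

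For (iii) $\Rightarrow$ (i), note that $\omega_n$ is a 2-element of $\fS_n$, being a product of cycles of 2-power length. The Feit parity remark recalled immediately before Lemma~\ref{lem:maxhook} then gives that $\chi^\lambda(1) \equiv \chi^\lambda(\omega_n) \pmod{2}$. If $|\chi^\lambda(\omega_n)| = 1$ then $\chi^\lambda(\omega_n)$ is odd, hence so is $\chi^\lambda(1)$, which means $\lambda \vdasho n$.

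The main obstacle is bookkeeping rather than mathematical depth: one must verify in the inductive step of (i) $\Rightarrow$ (ii) that the binary expansion of $n - 2^{a_1}$ is exactly $2^{a_2} + \ldots + 2^{a_r}$, so that the inductively produced 2-chain for $\lambda^2$ is indexed by the correct sequence of hook lengths to qualify as a tail of a 2-chain for $\lambda$. This is automatic since the $2^{a_i}$ are 2-disjoint, but it is the one place where the definition of 2-chain has to be matched to the specific binary decomposition being used.
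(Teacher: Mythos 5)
Your proof is correct and follows essentially the same route as the paper: the paper likewise obtains (i) $\Leftrightarrow$ (ii) by repeated application of Lemma~\ref{lem:maxhook}(3), gets (ii) $\Rightarrow$ (iii) from Lemma~\ref{lem:maxhook}(4), and closes the loop using that $\chi^{\la}(\omega_n)$ and $\chi^{\la}(1)$ have the same parity. The one small caveat is your base case: for the 2-chain to terminate at $\emptyset$ (which is what Lemma~\ref{lem:maxhook}(4) actually uses), an odd $\la\vdash 2^{a_1}$ must itself contain a $2^{a_1}$-hook, so the case $r=1$ is not vacuous but is one further application of Lemma~\ref{lem:maxhook}(3) with $\la\setminus h=\emptyset$.
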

\begin{proof}
(i) $\Leftrightarrow$ (ii) follows by repeated use of Lemma~\ref{lem:maxhook}(3).

(i) $\Rightarrow$ (iii) is stated in Lemma~\ref{lem:chain-value}(4), and the converse holds as $\chi^{\la}(\omega_n)$ and $\chi^\la(1)$ are of the same parity.
\end{proof}


\medskip

It is known that the Murnaghan-Nakayama formula may be iterated to get an interesting factorization formula for certain character values
involving $C_e(\lambda)$ and $Q_e(\la)$;  we recall this here (see \cite[2.7.33]{JK}).

\begin{lemma}\label{lem:farahat}
Let $\la\vdash n$, $Q_e(\la)=(\la_0,\ldots,\la_{e-1})$.
Assume that $w\ge w_e(\la)$, and let $\rho \in \fS_n$
be  the product of $w$ $e$-cycles;
let $\gamma \in \fS_n$ act on the fixed points of $\rho$.
Then
$$\chi^{\la}(\rho\gamma)=
\begin{cases}
\pm \binom{w}{|\la_0|,|\la_1|,\ldots,|\la_{e-1}|}\chi^{C_e(\la)}(\gamma)\prod_{i=0}^{e-1} \chi^{\la_i}(1) &  \text{if } w=w_e(\la) \\
 \rm{0} &  \text{if } w>w_e(\la)
\end{cases} \:.$$
\end{lemma}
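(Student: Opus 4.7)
The plan is to iterate the Murnaghan--Nakayama rule $w$ times, once per $e$-cycle of $\rho$, and then reorganize the resulting sum via the $e$-abacus. Writing $\rho=\sigma_1\cdots\sigma_w$ as a product of disjoint $e$-cycles and applying the MN-formula (cf.\ \cite[2.4.7]{JK}) repeatedly produces
\[
\chi^{\lambda}(\rho\gamma)\ =\ \sum_{S}\mathrm{sgn}(S)\,\chi^{\mu_S}(\gamma),
\]
where $S$ ranges over all sequences $\lambda=\lambda^{(0)}\supset\lambda^{(1)}\supset\cdots\supset\lambda^{(w)}=\mu_S$ of successive $e$-hook removals $h_j$, and $\mathrm{sgn}(S)=\prod_{j=1}^{w}(-1)^{L(h_j)}$ collects the leg-length signs. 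The case $w>w_e(\lambda)$ is then immediate: by Lemma~\ref{abacus-facts}(2) together with Remark~\ref{compatible-hooks}, every chain of successive $e$-hook removals from $\lambda$ has length at most $w_e(\lambda)$ and terminates at $C_e(\lambda)$, so no sequence $S$ of length $w$ exists and the sum is empty.

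When $w=w_e(\lambda)$, each $S$ ends at $C_e(\lambda)$, hence $\chi^{\mu_S}(\gamma)=\chi^{C_e(\lambda)}(\gamma)$ factors out and only the sign-weighted count $\sum_S\mathrm{sgn}(S)$ remains to be evaluated. I would fix an $e$-abacus $A$ for $\lambda$ and use Lemma~\ref{abacus-facts}(2) to identify $e$-hook removals with up-slides on the runners. Since runner $A_i$ carries the partition $\mu_i(A)$, which by Remark~\ref{rem: abacus} agrees with $\lambda_i$ up to a cyclic shift in indexing, a length-$w$ sequence $S$ decomposes as (i) a word in $\{0,\ldots,e-1\}^w$ specifying which runner is moved at each step, containing letter $i$ exactly $|\lambda_i|$ times, together with (ii) for each $i$ an ordered sequence of $|\lambda_i|$ corner removals reducing $\lambda_i$ to $\emptyset$. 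The constructions on distinct runners are independent, so part~(i) contributes the multinomial factor $\binom{w}{|\lambda_0|,\ldots,|\lambda_{e-1}|}$, and the sequences in~(ii) biject with standard Young tableaux of shape $\lambda_i$, hence number $\chi^{\lambda_i}(1)$.

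The main obstacle is the sign bookkeeping. The leg length $L(h_j)$ of an $e$-hook in the ambient partition $\lambda^{(j-1)}$ is not simply the leg length of the corresponding $1$-hook (corner) on runner $i$: it differs from it by a count of beads on the other runners that lie above the moving bead in the current abacus state, which at first sight depends on the history of earlier moves. I would control this via the classical compatibility of leg-length signs under the $e$-quotient bijection (see \cite[Section 3]{OlssonBook} and \cite{JK}), showing that once the interleaving word in (i) is fixed, $\mathrm{sgn}(S)$ is the same across all sequences $S$ realising it, equal to a global $\pm 1$ depending only on $A$. Pulling this factor out of the sum and combining it with the multinomial count and the dimensions $\chi^{\lambda_i}(1)$ then yields the asserted factorisation.
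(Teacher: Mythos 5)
The paper offers no proof of this lemma at all: it is stated as a known result and attributed to \cite[2.7.33]{JK}, so your argument cannot be compared with an in-paper proof but only judged on its own merits and against the cited source. What you have written is, in essence, the standard textbook derivation, and it is sound. The vanishing for $w>w_e(\la)$ is correct because every maximal chain of $e$-hook removals has length exactly $w_e(\la)$ (each removal is one up-slide on the abacus, and the total number of up-slides to reach $A^{\uparrow}$ is independent of the order), so iterating the MN-formula $w$ times leaves an empty sum. Your enumeration of the chains in the case $w=w_e(\la)$ — an interleaving word contributing the multinomial coefficient, times, for each runner, the number of orderings of corner removals emptying $\mu_i(A)$, which is the number of standard Young tableaux and hence $\chi^{\la_i}(1)$ — is also correct, and you rightly note that the cyclic shift between $(\mu_0(A),\ldots,\mu_{e-1}(A))$ and $Q_e(\la)$ is harmless since both the multinomial coefficient and the product of degrees are symmetric in the components. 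The one step you do not prove is the crucial sign claim; note that what you actually need (and what is true) is slightly stronger than constancy over sequences realising a fixed interleaving word: the accumulated sign $\prod_j(-1)^{L(h_j)}$ must be the same for \emph{all} complete removal chains, independent even of the word, or else the sum over words would not collapse to $\pm\binom{w}{|\la_0|,\ldots,|\la_{e-1}|}$. This is the classical well-definedness of the $e$-sign $\delta_e(\la)$ (it can be read off as the sign of the permutation sorting the $\beta$-numbers when passing from $\la$ to its core), and citing \cite{JK} and \cite{OlssonBook} for it is acceptable given that the paper itself cites the entire lemma; but if you intend your write-up as a self-contained proof rather than a reduction to that fact, this is the one point that still requires an argument.
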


For any $\la \vdash n$, $w_{2^k}(\la)  \le  \lfloor \frac{n}{2^k}\rfloor$, by Remark~\ref{compatible-hooks}.
If  $\la \vdasho n$ then $w_{2^k}(\la) =  \lfloor \frac{n}{2^k} \rfloor$,
by Theorem~\ref{theo: INOT}. 
Applying Lemma~\ref{lem:farahat}
with $e=2^k$ and $\rho$ a product of $ w=\lfloor \frac{n}{2^k} \rfloor$ $2^k$-cycles
we immediately obtain

\begin{corollary}\label{cor:farahat2}
Let $\la \vdash n$, with $\Qd{k}{\la}=(\la_1^{(k)}, \ldots, \la_{2^k}^{(k)})$.
If $\rho$  is a product of $ w=\lfloor \frac{n}{2^k} \rfloor$ $2^k$-cycles
and $\gamma \in \fS_n$ acts on the fixed points of $\rho$, then

$$\chi^{\la}(\rho \gamma)=
\begin{cases}
\pm \binom{w}{|\lambda^{(k)}_{1}|,|\lambda^{(k)}_{2}|,\ldots ,|\lambda^{(k)}_{2^k}|}\chi^{C_{2^k}(\la)}(\gamma)\prod_{i=1}^{2^k} \chi^{\lambda^{(k)}_{i}}(1)  & \text{if } w=w_{2^k}(\la)  \\
0 & \text{if } w>w_{2^k}(\la)
\end{cases}\:.$$
\end{corollary}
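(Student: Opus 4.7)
The plan is to simply specialize Lemma~\ref{lem:farahat} to the case $e=2^k$, $w=\lfloor n/2^k\rfloor$, and then cross the small gap between the two indexings of the quotient that appear in the two statements.

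First I would verify that Lemma~\ref{lem:farahat} is applicable, i.e.\ that the hypothesis $w\ge w_e(\la)$ is satisfied for our choice of parameters. This is immediate from Remark~\ref{compatible-hooks}: from $|\la|=|C_{2^k}(\la)|+2^k w_{2^k}(\la)$ we get $w_{2^k}(\la)\le n/2^k$, and since $w_{2^k}(\la)$ is an integer this forces
\[
w_{2^k}(\la)\le \Big\lfloor \tfrac{n}{2^k}\Big\rfloor = w.
\]
Thus Lemma~\ref{lem:farahat} directly produces the dichotomy stated in the corollary: the value vanishes when the inequality is strict, and otherwise equals the advertised product.

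The only remaining issue is purely cosmetic: Lemma~\ref{lem:farahat} is phrased with the standard $2^k$-quotient $Q_{2^k}(\la)=(\la_0,\ldots,\la_{2^k-1})$, whereas the corollary is phrased in terms of the $k$-th row $\Qd{k}{\la}=(\la^{(k)}_1,\ldots,\la^{(k)}_{2^k})$ of the 2-quotient tower. As emphasized in the paragraph preceding Remark~\ref{rem: abacus} and in the discussion of the tower, these two tuples consist of exactly the same multiset of partitions, differing only by a reordering. Since the right-hand side of the formula in Lemma~\ref{lem:farahat} involves only the multinomial coefficient $\binom{w}{|\la_0|,\ldots,|\la_{e-1}|}$ (symmetric in its arguments) and the product $\prod_i \chi^{\la_i}(1)$ (invariant under permutation of the factors), the permutation is irrelevant and the expression transports verbatim.

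There is no real obstacle here; the only thing to be a bit careful about is making sure the sign ambiguity $\pm$ in Lemma~\ref{lem:farahat} matches that in the corollary, which it does because it is inherited without modification. Putting these two observations together yields the claim.
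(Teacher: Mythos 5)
Your proposal is correct and matches the paper's own (very brief) justification: the paper likewise notes $w_{2^k}(\la)\le\lfloor n/2^k\rfloor$ via Remark~\ref{compatible-hooks} and then specializes Lemma~\ref{lem:farahat} with $e=2^k$. Your extra remark that the reindexing from $Q_{2^k}(\la)$ to $\Qd{k}{\la}$ is harmless because the formula is symmetric in the quotient components is a valid point the paper leaves implicit.
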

\smallskip

\begin{corollary} \label{cor:farahat}
Let $\la\vdash n$, $\rho,\gamma$ be as in Corollary~\ref{cor:farahat2}.
Then
$|\chi^{\la}(\rho \gamma)|=1$ if and only if the following conditions are all fulfilled:
\begin{itemize}
\item[(i)] There exists a unique $s \in \{1,2,\ldots,2^k \}$ such that  $\lambda^{(k)}_s \in \{(w),(1^w)\}$.
\item[(ii)] $|\chi^{C_{2^k}(\la)}(\gamma)| =1$.
\item[(iii)] $w=w_{2^k}(\la) $.
\end{itemize}
\end{corollary}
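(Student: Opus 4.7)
The plan is to reduce the statement to an analysis of the three factors appearing in the formula of Corollary~\ref{cor:farahat2}. First I would dispose of the case when condition (iii) fails: if $w > w_{2^k}(\la)$, then Corollary~\ref{cor:farahat2} gives $\chi^{\la}(\rho\gamma) = 0$, which has absolute value $\ne 1$. So condition (iii) is necessary, and for the remainder I assume it holds.

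Under (iii), Corollary~\ref{cor:farahat2} yields
\[
|\chi^{\la}(\rho\gamma)| \;=\; \binom{w}{|\lambda^{(k)}_{1}|,\ldots,|\lambda^{(k)}_{2^k}|} \cdot |\chi^{C_{2^k}(\la)}(\gamma)| \cdot \prod_{i=1}^{2^k} \chi^{\lambda^{(k)}_i}(1).
\]
All three factors on the right are non-negative integers, and the last is a product of positive integers (character degrees). Hence their product equals $1$ if and only if each factor equals $1$, and I would analyze the three factors separately.

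The multinomial coefficient equals $1$ iff exactly one of the sizes $|\lambda^{(k)}_s|$ equals $w$ and all other $\lambda^{(k)}_i$ are empty; this $s$ is unique since $w\ge 1$. Granted this, the factors $\chi^{\lambda^{(k)}_i}(1) = 1$ for $i\ne s$ are automatic, while $\chi^{\lambda^{(k)}_s}(1)=1$ iff $\chi^{\lambda^{(k)}_s}$ is a linear character of $\fS_w$, i.e., iff $\lambda^{(k)}_s \in \{(w),(1^w)\}$. Put together these are precisely condition (i). The remaining factor $|\chi^{C_{2^k}(\la)}(\gamma)|=1$ is condition (ii). Combining gives the stated equivalence.

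The argument is essentially bookkeeping with the three factors; the only slightly delicate points are recognizing when the multinomial coefficient is $1$ (a single nonzero part of size $w$) and recalling that the only degree-$1$ irreducibles of $\fS_w$ are the trivial and sign characters, labelled by $(w)$ and $(1^w)$. No genuine obstacle is expected.
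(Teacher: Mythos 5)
Your proof is correct and is essentially the argument the paper intends: Corollary~\ref{cor:farahat} is stated there without proof as an immediate consequence of Corollary~\ref{cor:farahat2}, and your factor-by-factor bookkeeping (using that $w=\lfloor n/2^k\rfloor\ge w_{2^k}(\la)$ always, so the two cases of Corollary~\ref{cor:farahat2} are exhaustive, and that under (iii) the sizes $|\lambda^{(k)}_i|$ sum to $w$) is exactly the intended deduction.
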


As a consequence of the previous results
we find all the character values $\pm1$ on certain 2-elements.
We use the notation introduced before;
in particular, $n=2^{a_1}+ \ldots + 2^{a_r}$ where $ a_1> \ldots >a_r \ge 0$.

\begin{corollary} \label{cor:value1}
Let $k\in \N$ with $2^k\le n$.
Set $w=\lfloor n/2^k \rfloor$, $m=n-w\cdot 2^k$.
Let $\rho\in\fS_n$ be a product of $w$ $2^k$-cycles,
$\gamma=\omega_m$ on the fixed points of $\rho$.
Let $\la\vdasho n$.
Then
$|\chi^{\la}(\rho \gamma)|=1$ if and only if  $\lambda^{(k)}_s\in \{(w),(1^w)\}$
for some $s \in \{1,2,\ldots,2^k \}$.

In particular, when $a_q\ge k > a_{q+1}$, there are exactly
$2^{k'}\prod_{i=q+1}^r 2^{a_i}$
irreducible characters with value $\pm 1$ on $\rho\gamma$,
where $k'=k$ when $w=1$, and $k'=k+1$ when $w>1$.
\end{corollary}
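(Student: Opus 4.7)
My plan is to obtain this as a specialization of Corollary~\ref{cor:farahat}, combined with a counting argument that exploits the bijection $\la\leftrightarrow(C_{2^k}(\la),Q_{2^k}(\la))$.

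First I would dispose of characters outside $\Irr_{2'}(\fS_n)$: since $\rho\gamma$ is a product of cycles of $2$-power length, hence a $2$-element, the parity fact recalled just before Lemma~\ref{lem:maxhook} yields $\chi^\la(\rho\gamma)\equiv\chi^\la(1)\pmod 2$, so $|\chi^\la(\rho\gamma)|=1$ forces $\la\vdasho n$. Assuming this, I would verify hypotheses (ii) and (iii) of Corollary~\ref{cor:farahat} by iterating Theorem~\ref{theo: INOT}: successively removing $w$ odd $2^k$-hooks from $\la$ produces an odd partition of size $m<2^k$, which admits no $2^k$-hook and therefore coincides with $C_{2^k}(\la)$. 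This simultaneously gives $w_{2^k}(\la)=w$ (condition (iii)) and the oddness of $C_{2^k}(\la)$; then Lemma~\ref{lem:odd-value} upgrades the latter to $|\chi^{C_{2^k}(\la)}(\gamma)|=|\chi^{C_{2^k}(\la)}(\omega_m)|=1$ (condition (ii)). The uniqueness of $s$ in (i) is automatic: since $\sum_t|\la^{(k)}_t|=w$, at most one entry can have size $w$. This disposes of the first assertion.

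For the counting I would use that a partition is determined by its $2^k$-core and $2^k$-quotient; the $\la\vdasho n$ satisfying the criterion are then parameterized by triples $(\eta,s,\pi)$ with $\eta\in\cO(m)$, $s\in\{1,\ldots,2^k\}$, and $\pi\in\{(w),(1^w)\}$, where $\eta$ is the $2^k$-core of $\la$ and $\pi$ is the unique non-empty component of $Q_{2^k}(\la)$, placed at position $s$. The number of odd partitions of $m=2^{a_{q+1}}+\cdots+2^{a_r}$ equals $\prod_{i=q+1}^r 2^{a_i}$ by \cite{B} together with Lemma~\ref{lem:odd-value}; multiplying by the $2^k$ choices for $s$ and by the $2$ choices for $\pi$, which collapse to $1$ when $w=1$ since then $(w)=(1^w)$, yields $2^{k'}\prod_{i=q+1}^r 2^{a_i}$.

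The main point requiring verification is that every such triple actually produces an odd $\la$ (so the parameterization is onto), which I would check by evaluating $\chi^\la(\rho\gamma)$ directly via Lemma~\ref{lem:farahat}: with this special shape the multinomial coefficient collapses to $\binom{w}{w,0,\ldots,0}=1$, each $\chi^{\la^{(k)}_i}(1)$ equals $1$, and $\chi^\eta(\gamma)=\pm 1$ by oddness of $\eta$, so $\chi^\la(\rho\gamma)=\pm 1$ and parity forces $\chi^\la(1)$ to be odd. The one wrinkle is that $Q_{2^k}(\la)$ and the tower row $\mathcal{Q}^{(k)}_2(\la)$ agree only up to a cyclic shift of coordinates (Remark~\ref{rem: abacus}), but the shape condition is invariant under this shift and the count of distinct $\la$'s is unaffected.
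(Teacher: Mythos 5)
Your proof is correct and follows essentially the same route as the paper's: both reduce to Corollary~\ref{cor:farahat} by observing that conditions (ii) and (iii) hold automatically for odd partitions (with uniqueness of $s$ forced by the total weight being $w$), and both count via the core--quotient parameterization with $\prod_{i=q+1}^r 2^{a_i}$ choices of core, $2^k$ positions and $1$ or $2$ shapes. Your two extra checks --- that $|\chi^{\la}(\rho\gamma)|=1$ already forces $\la$ to be odd, and that every triple $(\eta,s,\pi)$ really yields an odd partition --- are points the paper leaves implicit, and they tighten the argument rather than change it.
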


\begin{proof}
Since $\la\vdasho n$,
$w_{2^k}(\la) =  \lfloor \frac{n}{2^k} \rfloor$.
Furthermore, $C_{2^k}(\la)$ is an odd partition of $m=n-w\cdot 2^k$. Hence
$|C_{2^k}(\la)(\omega_m)|=1$ by Lemma~\ref{lem:odd-value}.
Thus both conditions (ii) and (iii) of Corollary~\ref{cor:farahat} are satisfied
for $\la$, and the criterion for
$|\chi^{\la}(\rho \gamma)|=1$ follows from Corollary~\ref{cor:farahat}; note that
the index $s$ is unique as the total weight of $\Qd{k}{\la}$ is $w$.

For counting the $\la\vdasho n$ satisfying $|\chi^{\la}(\rho \gamma)|=1$, note first that
there are exactly $\prod_{i=q+1}^r 2^{a_i}$ possible partitions $C_{2^k}(\la)$;
then there are $2^k$ possible indices $s$
such that $\lambda^{(k)}_s\in \{(w),(1^w)\}$, and either $1$ or $2$ possibilities
for   $\lambda^{(k)}_s$, when $w=1$ or $w>1$, respectively.
\end{proof}

For $\rho$ a product of $w_{2^k}(\la)$ $2^k$-cycles,
we know that $\la$ is odd if and only if  $\chi^{\la}(\rho)$ is odd.
This happens if and only if the following numbers are all odd:
$$\binom{w}{|\lambda^{(k)}_{1}|, |\lambda^{(k)}_{2}|,\ldots ,|\lambda^{(k)}_{2^k}|}, \quad
 \chi^{\lambda^{(k)}_{i}}(1) \text{, for all } i, \quad  \chi^{C_{2^k}(\la)}(1)\:.$$
Now
$\binom{w}{|\lambda^{(k)}_{1}|, |\lambda^{(k)}_{2}|,\ldots ,|\lambda^{(k)}_{2^k}|}$
is odd if and only if  the numbers $|\lambda^{(k)}_i|$,  $1\le i \le 2^k$,  are pairwise 2-disjoint.
Thus  $\lambda$ is odd if and only if the following conditions are all fulfilled:
\begin{itemize}
 \item[(i)] $C_{2^k}(\la)$ is odd.
\item[(ii)]  The partitions $\lambda_i^{(k)}$, $1\le i \le 2^k$, are all odd.
\item[(iii)] The numbers $|\lambda^{(k)}_i|$,  $1\le i \le 2^k$,  are pairwise 2-disjoint.
\end{itemize}

From this we may deduce the following theorem, which is also \cite[Theorem 2.5]{BGO}.
It shows that the $k$-data encodes full information on the parity of the character degree $\chi^\lambda(1)$.

\begin{teo}\label{thm:oddcriterion} 
Let $\lambda \vdash n$, and let $k \ge 0$ be fixed.
Consider
$\mathcal{Q}_2^{(k)}(\lambda)=(\la_1^{(k)},\ldots,\la_{2^k}^{(k)})$.
Then $\lambda$ is odd if and only if the following conditions are all fulfilled:
\begin{itemize}
\item[(i)]  $c_2^{(j)}(\lambda) \leq 1$ for all $j < k$.
\item[(ii)]  The partitions $\lambda_i^{(k)}$, $1\le i \le 2^k$, are all odd.
\item[(iii)] The numbers $|\lambda^{(k)}_i|$,  $1\le i \le 2^k$,  are pairwise 2-disjoint.
\end{itemize}
In this case  $ \sum_{i \ge 1} |\lambda^{(k)}_i|=\lfloor \frac{n}{2^k} \rfloor$.
\end{teo}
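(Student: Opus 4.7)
The plan is to bootstrap from the characterisation of oddness established in the paragraph immediately preceding the statement. That paragraph, via Lemma~\ref{lem:farahat} applied with $e=2^k$ and $\rho$ a product of $w_{2^k}(\la)$ $2^k$-cycles, shows that $\la\vdash n$ is odd if and only if the three conditions (i'), (ii'), (iii') hold: (i') $C_{2^k}(\la)$ is odd; (ii') every $\la_i^{(k)}$ is odd; (iii') the sizes $|\la_i^{(k)}|$ are pairwise $2$-disjoint. Conditions (ii') and (iii') are literally (ii) and (iii) of the theorem, so the entire content of the theorem boils down to the equivalence (i)$\Leftrightarrow$(i'): $C_{2^k}(\la)$ is odd if and only if $c_2^{(j)}(\la)\le 1$ for every $j<k$.

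To establish this, I would first observe that the $2$-core tower of $C_{2^k}(\la)$ agrees with $\mathcal{C}_2(\la)$ in rows $0,1,\ldots,k-1$ and is trivial from row $k$ onwards. The agreement up to row $k-1$ follows by iterating Remark~\ref{compatible-hooks}: removal of a $2^k$-hook in a partition corresponds, via the $2$-quotient, to removal of a $2^{k-1}$-hook in one of the two components of its $2$-quotient, and recursively to removal of a single $2$-hook located in row $k$ of the $2$-quotient tower. Successive $2^k$-hook removals therefore modify only rows $\ge k$ of the $2$-quotient tower and leave the first $k$ rows of the $2$-core tower untouched. Triviality beyond row $k-1$ in the tower of $C_{2^k}(\la)$ is immediate from $C_{2^k}(C_{2^k}(\la))=C_{2^k}(\la)$: its $2^k$-quotient is empty, hence so are all partitions in rows $\ge k$ of its $2$-core tower.

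With this identification in hand, (i')$\Leftrightarrow$(i) follows by induction on $k$. The case $k=1$ is clear since $C_2(\la)$ is odd iff $C_2(\la)\in\{\emptyset,(1)\}$ iff $c_2^{(0)}(\la)\le 1$. For the inductive step I would apply the level-$1$ instance of the three-condition criterion to $C_{2^k}(\la)$, whose $2$-core is $C_2(\la)$ and whose $2$-quotient is a pair of partitions that govern the subtrees of $\mathcal{C}_2(\la)$ rooted at row~$1$ (truncated at depth $k$), and then invoke the inductive hypothesis at level $k-1$ on each of these two components. The pairwise $2$-disjointness requirements propagating up the tower collapse into the simple row-by-row bound $c_2^{(j)}(\la)\le 1$ for $j<k$ by the size identity $|C_{2^k}(\la)|=\sum_{j<k}2^j c_2^{(j)}(\la)$, which forces each $c_2^{(j)}(\la)$ to take values in $\{0,1\}$ as soon as the tower consists of $2$-cores of size at most one. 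Finally, the concluding identity $\sum_i|\la_i^{(k)}|=\lfloor n/2^k\rfloor$ follows from $\sum_i|\la_i^{(k)}|=w_{2^k}(\la)$ (Remark~\ref{compatible-hooks}) combined with $w_{2^k}(\la)=\lfloor n/2^k\rfloor$ for odd $\la$, noted immediately after Lemma~\ref{lem:farahat}. The main bookkeeping obstacle is propagating the indexing on the $2$-core tower cleanly through the induction and aligning the $2$-disjointness at successive levels with the scalar bounds $c_2^{(j)}\le 1$; the underlying arithmetic is routine.
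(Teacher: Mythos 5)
Your proposal is correct and follows exactly the route the paper intends: it starts from the three-condition criterion (core odd, quotient components odd, sizes pairwise $2$-disjoint) derived in the paragraph preceding the theorem, and reduces everything to the equivalence of ``$C_{2^k}(\lambda)$ is odd'' with the bound $c_2^{(j)}(\lambda)\leq 1$ for $j<k$. The paper leaves that last equivalence to the cited reference (it is Macdonald's $2$-core tower criterion), whereas you supply a self-contained induction for it; your inductive step, including the use of the size identity $|C_{2^{k-1}}(\nu)|=\sum_{j<k-1}2^jc_2^{(j)}(\nu)$ to convert the $2$-disjointness of the two quotient-core sizes into the row-by-row bound, checks out.
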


\smallskip

\subsection{Odd partitions: $R$-operators, $k$-types}

Let $n, j\in\mathbb{N}$ be such that $2\leq 2^j\leq n$. Let $\lambda\vdash_o n$ and let $h=H_{z_j(\la),s_j(\la)}(\la)$ be the unique $2^j$-hook in~$\lambda$ whose removal leads to the odd partition $f_j^n(\lambda)$ of $n-2^j$ (see Definition~\ref{def: odd-position}).
Repeated applications of Remark~\ref{compatible-hooks}  show that for any $k\in\{0,1,\ldots, j\}$ the removal of $h$ from $\lambda$ corresponds to the removal of a $2^{j-k}$-hook $\overline{h}$ from exactly one of the $2^{k}$ partitions involved in $\Qd{k}{\la}$.
We denote this distinguished component of $\Qd{k}{\la}$
by $R_k^{j}(\lambda)$; note that this partition occurs only once in
$\Qd{k}{\la}$, as for an odd partition
$\lambda$ the nonempty partitions in $\Qd{k}{\la}$ are
of different sizes (by Theorem~\ref{thm:oddcriterion} (iii)).
It is easy to see that for all $k\in\{0,1,\ldots, j\}$ the $k$-data of $f_j^n(\lambda)$ is obtained from the $k$-data of $\lambda$ by replacing $R_k^j(\lambda)$ with
$f_{j-k}^{r}(R_k^{j}(\lambda))$, where $r=|R_k^{j}(\lambda)|$.

\smallskip

We denote by $\mathcal{H}(n)$ the set of hook partitions of~$n$. The following lemma shows that if $\lambda$ belongs to a specific class of partitions, then it is very easy to explicitly describe $f_k(\lambda)$.

\begin{lemma}\label{lem: hooks}
Let $t,k, w\in\mathbb{N}$ be such that $0\le k < t$ and $0\leq w\leq 2^t-1$,
and let $\lambda=(2^t-w,1^w)\in\mathcal{H}(2^t)$. Then
$$f_k(\lambda)= \begin{cases}
(2^t-w,1^{w-2^k}) & \mathrm{if}\ 2^k \subseteq_2 w,\\
(2^t-w-2^k,1^w) & \mathrm{if}\ 2^k \not\subseteq_2 w.
\end{cases}$$
\end{lemma}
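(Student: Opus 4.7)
The plan is to identify explicitly the $2^k$-hooks of the hook partition $\lambda=(2^t-w,1^w)$ and then determine which removal produces an odd partition, invoking the uniqueness clause of Theorem~\ref{theo: INOT}. First, I would record that $\lambda$ is odd: the character degree of the hook $(n-\ell,1^\ell)\vdash n$ is $\binom{n-1}{\ell}$, so $\chi^\lambda(1)=\binom{2^t-1}{w}$, and since $2^t-1=\sum_{i=0}^{t-1}2^i$ has every binary digit in $\{0,\dots,t-1\}$ equal to $1$, Lucas' theorem yields that this binomial is odd for every $0\le w\le 2^t-1$. By the same computation, any hook partition of $2^t-2^k$ of the form $(2^t-2^k-\ell,1^\ell)$ is odd precisely when $\ell$ has no binary digit outside $\{0,\dots,t-1\}\setminus\{k\}$, i.e.\ precisely when $2^k\not\subseteq_2\ell$.

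Next I would enumerate the hooks of $\lambda$. Since $k<t$, a $2^k$-hook cannot be the principal hook $H_{(1,1)}(\lambda)$ of length $2^t$. The remaining hooks are the arm-only hooks $H_{(1,j)}(\lambda)$ of length $2^t-w-j+1$ for $2\le j\le 2^t-w$ and the leg-only hooks $H_{(i,1)}(\lambda)$ of length $w-i+2$ for $2\le i\le w+1$. Hence a $2^k$-hook on the arm exists precisely when $w\le 2^t-2^k-1$, and removing it produces $(2^t-w-2^k,1^w)$; a $2^k$-hook on the leg exists precisely when $w\ge 2^k$, and removing it produces $(2^t-w,1^{w-2^k})$.

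Finally, I would apply the oddness criterion from the first paragraph to each candidate: the arm-removal $(2^t-w-2^k,1^w)$ is odd iff $2^k\not\subseteq_2 w$, while the leg-removal $(2^t-w,1^{w-2^k})$ is odd iff $2^k\not\subseteq_2(w-2^k)$, equivalently iff $2^k\subseteq_2 w$. These two conditions are mutually exclusive, and in each case the corresponding candidate is a genuine hook of $\lambda$ (when $2^k\subseteq_2 w$ then $w\ge 2^k$, and when $2^k\not\subseteq_2 w$ then automatically $w\le 2^t-2^k-1$ since $w\le 2^t-1$ and the $2^k$ bit of $w$ is $0$). Theorem~\ref{theo: INOT} then identifies $f_k(\lambda)$ as the unique odd removal, which matches the formula in the statement.

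The only mildly delicate point is ensuring the validity of each candidate hook exactly in the regime where its removal is odd, so that no spurious case arises; this is handled by the binary-digit observations above. Otherwise the argument is a direct computation with the hook structure of $\lambda$ combined with Lucas' theorem.
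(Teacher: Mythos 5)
Your proof is correct and follows essentially the same route as the paper, which simply cites the degree formula $\chi^{(n-w,1^w)}(1)=\binom{n-1}{w}$ and declares that the statement follows; you have merely filled in the details (Lucas' theorem for the parity of $\binom{2^t-1}{w}$ and $\binom{2^t-2^k-1}{\ell}$, the enumeration of arm and leg $2^k$-hooks, and the appeal to the uniqueness in Theorem~\ref{theo: INOT}) that the paper leaves implicit. The verification that exactly one candidate hook exists and is odd in each case is exactly the content the paper's ``follows directly'' is hiding, and you have handled it correctly.
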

\begin{proof}
It is well known that if $\mu=(n-w, 1^w)\in\mathcal{H}(n)$, then $\chi^\mu(1)={n-1 \choose w}$.
The statement of the lemma follows directly from this fact.
\end{proof}

\smallskip

For arbitrary partitions $\lambda$ of an arbitrary natural number $n$, it is not always as easy as it is for hook partitions of a power of $2$ to understand the structure of $f_k^n(\lambda)$. For this reason
we give an explicit arithmetic determination for $R_k^j(\la)$ which will turn out to be very useful.

\begin{lemma}\label{lem:R-arithmetic}
Let
$n=2^{a_1} + \ldots + 2^{a_r}$ with
$a_1>a_2 > \ldots > a_r\ge 0$.
Let $\la\vdasho n$
and $j\in \N$ with $2^j\le n$.
Let $k\in \N_0$ with $k\le j$ and
$
\Qd{k}{\la} =(\lambda_1,\ldots, \lambda_{2^k})$.
Let $a_t=\min\{a_i\mid a_i\ge j\}$.
Then $2^{a_t-k}$ is a binary digit of $|\la_s|$ for a unique
$s\in \{1,\ldots,2^k\}$,
and $R_k^j(\la)=\la_s$.
\end{lemma}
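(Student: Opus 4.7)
My plan is to combine Theorem~\ref{thm:oddcriterion}, applied both to $\lambda$ and to $\mu=f_j^n(\lambda)$, with a short binary-subtraction analysis.

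First, since $a_t\geq j\geq k$, the number $\lfloor n/2^k\rfloor=\sum_{a_i\geq k}2^{a_i-k}$ has $2^{a_t-k}$ as a binary digit. By Theorem~\ref{thm:oddcriterion} the sizes $|\lambda_1|,\ldots,|\lambda_{2^k}|$ are pairwise $2$-disjoint and sum to $\lfloor n/2^k\rfloor$, so their binary expansions partition the binary digits of $\lfloor n/2^k\rfloor$; hence there is a unique $s\in\{1,\ldots,2^k\}$ with $2^{a_t-k}\subseteq_2|\lambda_s|$.

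Next, set $R_k^j(\lambda)=\lambda_{s'}$. By the discussion preceding the lemma, $\mathcal{Q}_2^{(k)}(\mu)$ agrees with $\mathcal{Q}_2^{(k)}(\lambda)$ except that $\lambda_{s'}$ is replaced by a partition of size $|\lambda_{s'}|-2^{j-k}$. Applying Theorem~\ref{thm:oddcriterion} to the odd partition $\mu$, the multiset $\{|\lambda_{s'}|-2^{j-k}\}\cup\{|\lambda_i|:i\neq s'\}$ is pairwise $2$-disjoint with sum $\lfloor n/2^k\rfloor-2^{j-k}$, and therefore its members partition the binary digits of that number.

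The key---and the only real calculation---is to verify that $2^{a_t-k}$ is \emph{not} a binary digit of $\lfloor n/2^k\rfloor-2^{j-k}$. If $a_t=j$, the subtraction simply erases this bit. If $a_t>j$, then $2^{j-k}$ is absent from $\lfloor n/2^k\rfloor$, and by minimality of $a_t$ no $a_i$ lies strictly between $j$ and $a_t$; consequently no bit of $\lfloor n/2^k\rfloor$ sits at any position in $\{j-k,\ldots,a_t-k-1\}$, and the borrowing in the subtraction propagates straight up to position $a_t-k$, flipping that bit from $1$ to $0$. In both cases $a_t-k$ vanishes from the difference. Combined with the previous paragraph, $2^{a_t-k}\not\subseteq_2|\lambda_i|$ for every $i\neq s'$; by the uniqueness established in the first paragraph, $s=s'$, whence $R_k^j(\lambda)=\lambda_s$. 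The main obstacle is the borrowing case of the arithmetic step, where one must use the minimality of $a_t$ to conclude that the borrow travels all the way up to position $a_t-k$; the rest is a direct reading of Theorem~\ref{thm:oddcriterion}.
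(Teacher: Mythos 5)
Your proof is correct, and it uses the same two ingredients as the paper's argument --- the $2$-disjointness criterion of Theorem~\ref{thm:oddcriterion} and the observation that the $k$-data of $f_j^n(\la)$ is obtained by replacing $R_k^j(\la)$ with $f_{j-k}(R_k^j(\la))$ --- but it runs the logic in the opposite direction. The paper argues by sufficiency: it notes that $f_{j-k}(\la_s)$ is defined and claims that $|f_{j-k}(\la_s)|=|\la_s|-2^{j-k}$ remains $2$-disjoint from the other $|\la_i|$, so that the modified $k$-data belongs to an odd partition of $n-2^j$, and the uniqueness of the odd $2^j$-hook (Theorem~\ref{theo: INOT}) then forces $R_k^j(\la)=\la_s$. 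You argue by necessity: writing $R_k^j(\la)=\la_{s'}$ for the actual distinguished component, you apply Theorem~\ref{thm:oddcriterion} to $f_j^n(\la)$ and show that the digit $2^{a_t-k}$ vanishes from the new total $\lfloor n/2^k\rfloor-2^{j-k}$, hence cannot occur in any $|\la_i|$ with $i\ne s'$, which pins down $s'=s$. Your route avoids re-invoking the uniqueness statement of Theorem~\ref{theo: INOT}, and it makes fully explicit the borrow analysis (via the minimality of $a_t$) that the paper's one-line assertion of $2$-disjointness of $|f_{j-k}(\la_s)|$ leaves to the reader; that borrow computation is exactly the content hidden in the paper's ``Hence'', so the two proofs are of essentially equal depth.
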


\begin{proof}
By Theorem~\ref{thm:oddcriterion} we know that $\sum_{i=1}^{2^k} |\lambda_i|=\lfloor \frac{n}{2^k} \rfloor$ and that the numbers $|\lambda_i|$ are pairwise $2$-disjoint. Hence there exists a unique $s\in\{1,\ldots, 2^k\}$ such that $2^{a_t-k}$ is a binary digit of $|\la_s|$. Moreover, by definition we have that $\lambda_s$ is an odd partition such that $2^{j-k}\leq |\la_s|$. Hence $f_{j-k}(\lambda_s)$ is a well defined odd partition whose size is $2$-disjoint from $|\lambda_r|$ for all $r\in\{1,\ldots,2^k\}\setminus\{s\}$. Thus $R_k^j(\la)=\la_s$.
\end{proof}

In particular, for the realization of the removal of a $2^k$-hook from $\la$ in its $k$-data,
we look for the partition $\la_s$
such that the smallest binary digit of $\lfloor n/2^k\rfloor$ is a digit of $|\lambda_s|$.

\medskip

Because of this arithmetic description,
the following definition will play an important role.

\begin{definition}
Let $n,k \in \N$, $2^k\le n$.
Let $\la \vdash n$, with
$\Qd{k}{\la}=(\la_1,\ldots,\la_{2^k})$.
We say that $\la$ is
of {\em $k$-type}
$(n_1,\ldots,n_{2^k})$ (of size $\lfloor n/2^k \rfloor$)
if $n_j=|\la_j|$ for $1\le j \le 2^k$.

We let $\cO(n;\tau)$
denote the subset of $\cO(n)$
of partitions of $k$-type $\tau$.
\end{definition}

Note that for odd partitions of $n$,
their $k$-types are exactly the
2-disjoint weak compositions
of $\lfloor n/2^k\rfloor$ of length $2^k$.
Moreover, for any $j\ge k$, the position of
$R_k^j(\la)$ only depends on the $k$-type of $\la$, by Lemma~\ref{lem:R-arithmetic}.


\section{Comparing $f_k f_k$  with $f_{k+1}$}\label{sec: kk,k+1}

Let $n$ and $k$ be natural numbers such that $2^{k+1}\leq n$.
The aim of this section is to completely describe those partitions $\lambda\in\mathcal{O}(n)$ such that $f_kf_k(\lambda)=f_{k+1}(\lambda)$.
The strategy we adopt to achieve our goal is to reduce our problem to the study of the case where $k=0$.

We define
$$
\cG_k(n)=\{\la \vdasho n \mid f_kf_k(\la)=f_{k+1}(\la)\}
$$
and set $G_k(n)=|\cG_k(n)|$.
For a $k$-type $\tau$ of size $\lfloor n/2^k \rfloor$, we let $\cG_k(n;\tau)$ be the subset of $\cG_k(n)$
of partitions of $k$-type~$\tau$, and set $G_k(n;\tau)=|\cG_k(n;\tau)|$.
Furthermore, we set
$\displaystyle
\cG_k=\bigcup_{n\ge 2^{k+1}} \cG_k(n)\:.
$

\medskip

The reduction to the case $k=0$ is possible thanks to the following lemma.

\begin{lemma}\label{lem: fk2-fk+1}
Let $k, n\in\mathbb{N}_0$ be such that $2^{k+1}\leq n$. Let $\lambda\vdasho n$.
Then
$\la \in \cG_k$ if and only if
$R_k^{k}(\lambda)= R_k^{k+1}(\lambda) \in \cG_0$.
\end{lemma}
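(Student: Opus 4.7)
The plan is to compute everything in the $k$-data. An odd partition is determined by its $k$-data, and by the description recalled just before the lemma, for any $j\ge k$ the $k$-data of $f_j(\la)$ agrees with that of $\la$ in rows $0,\ldots,k-1$ (since $f_j$ preserves the $2^k$-core when $j\ge k$) while row $k$ differs only by replacing $R_k^j(\la)$ with $f_{j-k}(R_k^j(\la))$. Applying this with $j=k$ twice and with $j=k+1$ once, the identity $f_kf_k(\la)=f_{k+1}(\la)$ reduces to equality of the two $2^k$-quotients obtained from $\Qd{k}{\la}$ by the prescribed one-position updates.

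Write $n=2^{a_1}+\cdots+2^{a_r}$ with $a_1>\cdots>a_r\ge 0$. By Lemma~\ref{lem:R-arithmetic}, $R_k^k(\la)$ sits at the unique position $s\in\{1,\ldots,2^k\}$ whose $k$-type coordinate contains the binary digit $2^{a_t-k}$, with $a_t=\min\{a_i:a_i\ge k\}$; and $R_k^{k+1}(\la)$ sits at the unique position $s'$ whose coordinate contains $2^{a_{t'}-k}$, with $a_{t'}=\min\{a_i:a_i\ge k+1\}$. A short binary-arithmetic discussion, depending on whether $2^k$ is a digit of $n$, will characterize when $s=s'$.

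Assume first that $s=s'$ and write $\mu:=R_k^k(\la)=R_k^{k+1}(\la)$. The key sub-step is to show that after the first application of $f_k$, the distinguished component $R_k^k(f_k(\la))$ is still at position $s$. This requires following the binary expansion of $n-2^k$: if $n$ already contains the digit $2^k$ then that digit simply disappears, while if not then borrowing from $2^{a_t}$ introduces fresh digits $2^k,\ldots,2^{a_t-1}$. In either case, the $2$-disjointness of $k$-type sizes (Theorem~\ref{thm:oddcriterion}) together with Lemma~\ref{lem:R-arithmetic} forces the distinguished position to be $s$ again. Consequently the $k$-quotient of $f_kf_k(\la)$ differs from $\Qd{k}{\la}$ only at position $s$, where it equals $f_0f_0(\mu)$, and likewise the $k$-quotient of $f_{k+1}(\la)$ differs only at position $s$, where it equals $f_1(\mu)$. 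Hence $\la\in\cG_k$ if and only if $f_0f_0(\mu)=f_1(\mu)$, i.e.\ $\mu\in\cG_0$.

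It remains to rule out the case $s\ne s'$. This forces $n$ to contain the digit $2^k$ and $a_{t'}>k$. A parallel computation places $R_k^k(f_k(\la))$ at position $s'$: subtracting $1$ from the odd $k$-type size $|\la_s|$ only removes its digit $1$ and leaves the digit $2^{a_{t'}-k}$ untouched, which by $2$-disjointness belongs to $|\la_{s'}|$. The $k$-quotient of $f_kf_k(\la)$ therefore has $f_0(\la_s)$ at position $s$, whereas that of $f_{k+1}(\la)$ has $\la_s$ at position $s$; since $|f_0(\la_s)|=|\la_s|-1$, these components disagree and so $\la\notin\cG_k$. The main obstacle in the whole argument is this position-tracking for $R_k^k(f_k(\la))$, which is purely arithmetic but must be done carefully because of the potential borrowing when $n-2^k$ is expanded in binary.
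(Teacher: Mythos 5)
Your argument is correct and follows essentially the same route as the paper: both reduce the identity $f_kf_k(\la)=f_{k+1}(\la)$ to a comparison of $k$-data, using Lemma~\ref{lem:R-arithmetic} to locate the distinguished component at each step. The only difference is that you spell out the binary borrowing needed to see that $R_k^k(f_k(\la))$ sits at the expected position, a point the paper's proof asserts without detail; your case analysis there (digit $2^k$ present in $n$ or not) checks out.
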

\begin{proof}
Suppose that
$R_k^{k}(\lambda)= R_k^{k+1}(\lambda) =:\mu \in \cG_0$.
Then
$R_k^{k}(f_k(\lambda))=f_0(R_k^{k+1}(\lambda))=f_0(\mu)$. Hence the $k$-data of $f_kf_k(\lambda)$ and  $f_{k+1}(\lambda)$, respectively, are obtained
from that of $\la$ by replacing $\mu$ with  $f_0f_0(\mu)$ and $f_1(\mu)$, respectively.
Since $\mu \in \cG_0$, these are equal, 
and since the $k$-data uniquely determines the partition, we deduce that $f_kf_k(\lambda)=f_{k+1}(\lambda)$.

In order to prove the converse,
we assume that $f_kf_k(\lambda)=f_{k+1}(\lambda)$.
Then
$R_k^{k}(\lambda)= R_k^{k+1}(\lambda)=:\mu$ as
otherwise $\Qd{k}{f_kf_k(\lambda)} \ne \Qd{k}{f_{k+1}(\lambda)}$.
Furthermore the $k$-data of $f_kf_k(\lambda)$ and  $f_{k+1}(\lambda)$, respectively, are obtained
from that of $\la$ by replacing $\mu$
with  $f_0f_0(\mu)$ and $f_1(\mu)$, respectively.
Hence we deduce  $\mu \in \cG_0$.
\end{proof}

We fix below some notation for the preparatory results
towards the counting formula in Theorem~\ref{theo: G}.

\begin{notation}\label{not:33}
Let $n,k\in \N_0$ be such that $2^{k+1}\leq n$.
We write
$n=2^{a_1} + \ldots + 2^{a_r}$, where
$a_1>a_2 > \ldots > a_r$.
We define $p,q \in\{1,\ldots, r\}$
to be maximal such that $a_p\ge k+1$ and $a_q\ge k$, respectively.
Set $b_j=a_j-k$, for $1\le j \le q$, and
$B=\{b_1,\ldots,b_q\}$.
Note that then either $b_p>b_q=0$ or $b_p=b_q > 0$.
\end{notation}

Using the definition and notation above,
we may reformulate Lemma~\ref{lem: fk2-fk+1} as follows;
this is a direct consequence of  Lemma~\ref{lem:R-arithmetic}.

\begin{corollary}\label{cor: fk2-fk+1}
Let  $\lambda\in \cO(n;\tau)$ with
$\Qd{k}{\la}=(\la_1,\ldots,\la_{2^k})$,
$\tau=(n_1,\ldots,n_{2^k})$.
Then $\lambda\in \cG_k(n;\tau)$ if and only if
there is an $s\in \{1,\ldots,2^k\}$ such that
$2^{b_p},2^{b_q}\subseteq_2 n_s$ and
$\la_s\in \cG_0(n_s)$.
\end{corollary}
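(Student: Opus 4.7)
The plan is to derive the corollary by combining Lemma~\ref{lem: fk2-fk+1} with Lemma~\ref{lem:R-arithmetic} applied at the two relevant values $j=k$ and $j=k+1$. By Lemma~\ref{lem: fk2-fk+1}, membership of $\la$ in $\cG_k(n;\tau)$ is equivalent to the conjunction of two conditions: (a) $R_k^{k}(\lambda)=R_k^{k+1}(\lambda)$, and (b) this common partition lies in $\cG_0$. So I just need to translate (a) into the binary-digit condition stated in the corollary.

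First I would identify the index $s$ such that $R_k^{k}(\la)=\la_s$. In Lemma~\ref{lem:R-arithmetic} with $j=k$, the distinguished value is $a_t=\min\{a_i\mid a_i\ge k\}=a_q$, so $a_t-k=b_q$. Hence $R_k^{k}(\la)=\la_s$ where $s$ is the unique index in $\{1,\dots,2^k\}$ with $2^{b_q}\subseteq_2 n_s$. In exactly the same way, applying Lemma~\ref{lem:R-arithmetic} with $j=k+1$ gives $a_t=a_p$ and $a_t-k=b_p$, so $R_k^{k+1}(\la)=\la_{s'}$ for the unique index $s'$ with $2^{b_p}\subseteq_2 n_{s'}$. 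Uniqueness in each case is guaranteed by the pairwise $2$-disjointness of the parts of $\tau$ coming from Theorem~\ref{thm:oddcriterion}(iii).

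Then I would observe that $R_k^{k}(\la)=R_k^{k+1}(\la)$ holds precisely when $s=s'$, i.e., when the two binary digits $2^{b_p}$ and $2^{b_q}$ of $\lfloor n/2^k\rfloor$ both occur as binary digits of the same $n_s$. This is exactly the condition that there exist $s\in\{1,\dots,2^k\}$ with $2^{b_p},2^{b_q}\subseteq_2 n_s$ (collapsing to a single condition when $p=q$, and requiring two distinct digits when $p<q$, so $b_q=0$). Finally, under this equality the common value of $R_k^{k}(\la)$ and $R_k^{k+1}(\la)$ is $\la_s$, and condition (b) of Lemma~\ref{lem: fk2-fk+1} becomes $\la_s\in\cG_0(n_s)$, matching the statement.

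There is no real obstacle here beyond checking that the minima $a_q$ and $a_p$ in the two applications of Lemma~\ref{lem:R-arithmetic} are precisely the ones encoded in Notation~\ref{not:33} and that the uniqueness of $s$ given by the lemma matches the $2$-disjointness of $\tau$; once these two bookkeeping points are in place, the equivalence follows immediately from Lemma~\ref{lem: fk2-fk+1}.
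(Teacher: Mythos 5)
Your proof is correct and follows exactly the route the paper intends: the paper states this corollary as a direct consequence of Lemma~\ref{lem: fk2-fk+1} combined with Lemma~\ref{lem:R-arithmetic}, and your argument simply writes out the details (identifying $a_t=a_q$ for $j=k$ and $a_t=a_p$ for $j=k+1$, and using $2$-disjointness of the parts of $\tau$ to get uniqueness of $s$). No issues.
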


We also immediately deduce the following description.

\begin{lemma}\label{lem: reduced-ff-count}
Let $\tau=(n_1,\ldots,n_{2^k})$ be a $k$-type of size $\lfloor n/2^k\rfloor$.
\begin{enumerate}
\item
If $2^{b_p},2^{b_q}$ are binary digits of two different parts of $\tau$,
then $\cG_k(n,\tau)=\emptyset$.
\item
If $2^{b_p},2^{b_q}$ are binary digits of the same part of $\tau$, say $n_j$,
then
$$
\cG_k(n;\tau)=
\{\la \in \cO(n;\tau) \mid
\Qd{k}{\la}_j \in \cG_0(n_j)
\}.
$$
\end{enumerate}
\end{lemma}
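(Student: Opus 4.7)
The plan is to derive both parts of the lemma as essentially immediate consequences of Corollary~\ref{cor: fk2-fk+1}, combined with the 2-disjointness of the parts of an odd $k$-type guaranteed by Theorem~\ref{thm:oddcriterion}(iii). The whole point is that Corollary~\ref{cor: fk2-fk+1} already characterises membership in $\cG_k(n;\tau)$ via the existence of an index $s$ with $2^{b_p},2^{b_q}\subseteq_2 n_s$ and $\la_s\in \cG_0(n_s)$; the lemma just repackages this criterion after observing that the index $s$, if it exists, is uniquely determined by $\tau$.

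First I would recall that for any $\la\in \cO(n;\tau)$, the parts $n_1,\ldots,n_{2^k}$ of $\tau$ are pairwise 2-disjoint by Theorem~\ref{thm:oddcriterion}(iii). In particular, every binary digit of $\lfloor n/2^k\rfloor$ belongs to at most one $n_i$. This is the only structural fact required beyond Corollary~\ref{cor: fk2-fk+1}.

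For part (1), under the hypothesis that $2^{b_p}$ and $2^{b_q}$ lie in two distinct parts of $\tau$, there is no single index $s\in \{1,\ldots,2^k\}$ such that both digits are in $n_s$. Hence the necessary condition of Corollary~\ref{cor: fk2-fk+1} fails for every $\la \in \cO(n;\tau)$, giving $\cG_k(n;\tau)=\emptyset$.

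For part (2), the unique part containing both $2^{b_p}$ and $2^{b_q}$ is $n_j$, so by 2-disjointness the index $s$ in Corollary~\ref{cor: fk2-fk+1} is forced to equal $j$. The digit condition is then automatically satisfied, and the criterion for $\la\in \cG_k(n;\tau)$ reduces to $\cQ_2^{(k)}(\la)_j\in \cG_0(n_j)$, yielding the claimed description of $\cG_k(n;\tau)$. There is no real obstacle here: both statements are bookkeeping corollaries of Corollary~\ref{cor: fk2-fk+1}, and the only subtlety worth emphasising in the write-up is the uniqueness of the relevant index $s$, which comes for free from the 2-disjointness of the $k$-type.
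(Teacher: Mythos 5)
Your proposal is correct and follows exactly the route the paper intends: the paper gives no separate proof of this lemma, stating only that it is "immediately deduced" from Corollary~\ref{cor: fk2-fk+1}, and your argument spells out precisely that deduction, with the 2-disjointness of the parts of an odd $k$-type (Theorem~\ref{thm:oddcriterion}(iii)) pinning down the index $s$ as $j$ in part (2) and ruling out any admissible $s$ in part (1).
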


From the results above we deduce that for $\la\in \cO(n)$,
the property $f_k f_k(\la)=f_{k+1}(\la)$
does not depend on the rows of the $k$-data corresponding to the cores.
Hence towards counting these odd partitions we have:

\begin{lemma}\label{lem: ff-core-part}
We set $\bar n= \sum_{i=1}^q 2^{a_i}$.
For any $k$-type $\tau$ of size $\lfloor n/2^k\rfloor$ we have
$$
G_k(n;\tau)=(\prod_{j=q+1}^r 2^{a_j})\cdot G_k(\bar n;\tau).
$$
In particular,
$$
G_k(n)=(\prod_{j=q+1}^r 2^{a_j})\cdot G_k(\bar n)\:.
$$
\end{lemma}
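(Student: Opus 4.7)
The plan is to leverage the fact that an odd partition $\lambda$ of $n$ is uniquely encoded by its $k$-data $\mathcal{D}_2^{(k)}(\lambda)$, which decomposes into two parts: the \emph{core part}, consisting of rows $0, \ldots, k-1$ (the 2-cores of the entries of $\mathcal{Q}_2^{(j)}(\lambda)$ for $j<k$), and the \emph{quotient row} $\mathcal{Q}_2^{(k)}(\lambda) = (\lambda_1^{(k)}, \ldots, \lambda_{2^k}^{(k)})$. The $k$-type $\tau$ depends only on the quotient row, and by Corollary~\ref{cor: fk2-fk+1} so does membership in $\cG_k(n;\tau)$. Writing
\[
G_k(n;\tau) \;=\; N_{\mathrm{core}}(n)\cdot N_{\mathrm{quo}}(\tau),
\]
where $N_{\mathrm{core}}(n)$ counts the admissible core parts compatible with total size $n$ (subject to Theorem~\ref{thm:oddcriterion}(i)) and $N_{\mathrm{quo}}(\tau)$ counts admissible quotient rows of $k$-type $\tau$ satisfying the $\cG_0$ condition of Lemma~\ref{lem: reduced-ff-count}(2), reduces the problem to computing $N_{\mathrm{core}}$.

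Next I would compute $N_{\mathrm{core}}(n)$ using Theorem~\ref{thm:oddcriterion}(i). The condition $c_2^{(j)}(\lambda)\le 1$ for $j<k$ forces the total size of 2-cores in each row $j$ to be $0$ or $1$; since every nonempty 2-core is a staircase, at most one position in that row carries the single-box 2-core $(1)$. Matching $|\lambda|=n$ against the contributions from each level and the quotient row yields
\[
\sum_{j=0}^{k-1} 2^j c_2^{(j)}(\lambda) \;=\; n - 2^k\lfloor n/2^k\rfloor \;=\; 2^{a_{q+1}}+\cdots+2^{a_r}.
\]
Uniqueness of the binary expansion then forces $c_2^{(j)}(\lambda)=1$ precisely for $j\in\{a_{q+1},\ldots,a_r\}$; for each such $j$ the single nontrivial 2-core can be placed at any of the $2^j$ positions of row $j$ independently. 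Thus $N_{\mathrm{core}}(n)=\prod_{j=q+1}^{r} 2^{a_j}$, in agreement with the count of the possible $2^k$-cores of odd partitions of $n$ already obtained in the proof of Corollary~\ref{cor:value1}.

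For $\bar n = \sum_{i=1}^{q} 2^{a_i}$, every summand is divisible by $2^k$ (since $a_i \ge k$ for $i \le q$), so $\bar n \equiv 0 \pmod{2^k}$ and the same argument forces the core part to be entirely trivial, giving $N_{\mathrm{core}}(\bar n)=1$. Since $\lfloor \bar n/2^k\rfloor = \lfloor n/2^k\rfloor$, the $k$-types of size $\lfloor n/2^k\rfloor$ are the same for $n$ and $\bar n$, and the quotient-row factor $N_{\mathrm{quo}}(\tau)$ coincides in the two settings. Combining these yields $G_k(n;\tau) = \bigl(\prod_{j=q+1}^r 2^{a_j}\bigr) G_k(\bar n;\tau)$, and summing over all $k$-types $\tau$ of size $\lfloor n/2^k\rfloor$ delivers the formula for $G_k(n)$.

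The main technical point is justifying the product decomposition: one must verify that every pairing of an admissible core part (single-box 2-cores placed according to the binary digits of $n \bmod 2^k$) with an admissible quotient row of size $\lfloor n/2^k\rfloor$ really does assemble into a bona fide odd partition of $n$ of the prescribed $k$-type, and conversely that every such odd partition arises this way. This is precisely the content of Theorem~\ref{thm:oddcriterion}: its three conditions partition cleanly into one constraint on the core part (condition (i)) and two on the quotient row (conditions (ii) and (iii)), so the independence of the two factors is built into the criterion.
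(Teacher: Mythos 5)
Your proof is correct and follows exactly the route the paper intends: the paper derives this lemma in one line from the observation (Corollary~\ref{cor: fk2-fk+1} and Lemma~\ref{lem: reduced-ff-count}) that membership in $\cG_k(n;\tau)$ depends only on the quotient row of the $k$-data, with the core rows contributing the independent factor $\prod_{j=q+1}^r 2^{a_j}$. Your write-up simply makes explicit the core-counting and the product decomposition that the paper leaves implicit.
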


To state the reduction formula for $G_k(n)$ in a nice way,
we introduce weights that are based on the counts for the case $k=0$.
For the numbers $G_0(n)$, we will derive explicit formulae later in this section.
(A similar approach will be taken for the commutativity problem in later sections.)

\begin{definition}
We fix $k\in\N$.
For $J\subseteq I=\{1,\ldots,q\}$
we define its $G$-weight with respect to the set
$B=\{b_1,\ldots,b_q\}$ to be
$$
w_G(J) =
(\prod_{i\in I\setminus J}2^{b_i})
\cdot
(2^k-1)^{|I\setminus J|}
\cdot
G_0(\sum_{j\in J}2^{b_j}).
$$
\end{definition}

Then we have the following formula for the number $G_k(n)$, based on the values $G_0(u)$.

\begin{teo}\label{theo: G}
With the notation introduced in~\ref{not:33}, we have
$$
G_k(n)=
(\prod_{j=q+1}^r 2^{a_j})\cdot
2^k \cdot
\sum_{\{p,q\}\subseteq J \subseteq I} w_G(J) \:.
$$
\end{teo}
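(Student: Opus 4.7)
The plan is to combine the reduction in Lemma~\ref{lem: ff-core-part} with a direct parameterization of the odd partitions being counted. The lemma peels off the factor $\prod_{j=q+1}^r 2^{a_j}$, so it suffices to prove
$$G_k(\bar n)= 2^k\sum_{\{p,q\}\subseteq J\subseteq I} w_G(J).$$

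First I would split $\cG_k(\bar n)$ according to the $k$-type $\tau=(n_1,\ldots,n_{2^k})$, a $2$-disjoint weak composition of $\lfloor\bar n/2^k\rfloor = \sum_{i=1}^q 2^{b_i}$. By Lemma~\ref{lem: reduced-ff-count}(1), only those types in which $2^{b_p}$ and $2^{b_q}$ are binary digits of a common part $n_j$ contribute. I parameterize the contributing types by a triple $(j,J,\phi)$: the position $j\in\{1,\ldots,2^k\}$ of the distinguished part (giving the overall factor $2^k$); a subset $J\subseteq I$ with $\{p,q\}\subseteq J$ that specifies the binary digits of $n_j$, so $n_j=\sum_{i\in J}2^{b_i}$; and an arbitrary map $\phi\colon I\setminus J\to\{1,\ldots,2^k\}\setminus\{j\}$ distributing the remaining digits, giving the factor $(2^k-1)^{|I\setminus J|}$ appearing in $w_G(J)$.

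Next, for a fixed such type, Lemma~\ref{lem: reduced-ff-count}(2) identifies $\cG_k(\bar n;\tau)$ with the set of odd partitions whose $j$th $k$-quotient component lies in $\cG_0(n_j)$ and whose other components are unconstrained odd partitions of the corresponding $n_i$. Using the known count $|\cO(m)|=\prod_{2^c\subseteq_2 m}2^c$ (a consequence of Lemma~\ref{lem:odd-value} together with the count of partitions with a $2$-chain recalled in Section~\ref{sec:prelims}), the contribution of the non-special positions multiplies to $\prod_{i\in I\setminus J}2^{b_i}$, since every digit $2^{b_i}$ with $i\notin J$ appears in exactly one $n_{i'}$ with $i'\neq j$. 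Hence
$$G_k(\bar n;\tau)= G_0\Bigl(\sum_{i\in J}2^{b_i}\Bigr)\cdot\prod_{i\in I\setminus J}2^{b_i},$$
and summing over all triples $(j,J,\phi)$ reassembles the claimed formula exactly.

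The argument is essentially bookkeeping, and I do not foresee a genuine obstacle beyond one small unification: both regimes $a_q=k$ (where $p<q$, $b_q=0$, and $\{p,q\}$ has two elements) and $a_q\ge k+1$ (where $p=q$ and $\{p,q\}$ is a singleton) must fit the same parameterization. They do, because the condition $\{p,q\}\subseteq J$ is read set-theoretically; a sanity check in the degenerate case $k=0$ (where only $J=I$ survives and the formula collapses to $G_0(n)=G_0(n)$) confirms that no boundary correction is required.
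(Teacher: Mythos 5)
Your proposal is correct and follows essentially the same route as the paper's own proof: reduce to $G_k(\bar n)$ via Lemma~\ref{lem: ff-core-part}, discard non-contributing $k$-types by Lemma~\ref{lem: reduced-ff-count}(1), parameterize the contributing types by the position $j$ (factor $2^k$), the digit set $J\supseteq\{p,q\}$ of the distinguished part, and the placement of the remaining digits (factor $(2^k-1)^{|I\setminus J|}$), and then count within each type as $G_0(n_j)\cdot\prod_{i\in I\setminus J}2^{b_i}$ using Lemma~\ref{lem: reduced-ff-count}(2). Your remark that the cases $p=q$ and $p<q$ are absorbed by reading $\{p,q\}\subseteq J$ set-theoretically is a sound observation that the paper leaves implicit.
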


\begin{proof}
In view of Lemma~\ref{lem: ff-core-part}, we focus on
computing the numbers $G_k(\bar n)$.
We take the sum over all $k$-types of size $\lfloor n/2^k\rfloor$,
and for each $k$-type $\tau$
we count the partitions in $\cG_k(\bar n;\tau)$
according to Lemma~\ref{lem: reduced-ff-count}.

This lemma tells us that we only get a contribution from
those $k$-types $\tau$  where $2^{b_p},2^{b_q}$
are binary digits in the same part of $\tau$.
To construct these,
take any $J\subseteq I$
containing $p,q$ and define $n_J:=\sum_{j\in J}2^{b_j}$.
We now consider the contributions from all $k$-types for this fixed~$J$.
We first have $2^k$ choices
for the position $j$ of the part
of which $2^{b_p},2^{b_q}$ are
binary digits.
For each $i\in I\setminus J$
we have $2^k-1$ choices to place $2^{b_i}$
as a binary digit of a part of the type $\tau$ under construction, different from the $j$-th.
By Lemma~\ref{lem: reduced-ff-count}, for such a $\tau=(n_1,\ldots,n_{2^k})$,
the partitions $\la\in \cG_k(\bar n;\tau)$
are then those
such that $\Qd{k}{\la}_i$ is an arbitrary
odd partition of $n_i$ for $i\ne j$,
and $\Qd{k}{\la}_j\in \cG_0(n_j)$
(and the earlier rows of the $k$-data contain only empty cores).
Hence we get
$|\cG_k(\bar n;\tau)|=(\prod_{i\in I\setminus J}2^{b_i})
\cdot
G_0(\sum_{j\in J}2^{b_j})$,
and the total contribution for $J$ is exactly $2^k w_G(J)$.

This proves the stated formula.
\end{proof}
In light of the reduction results obtained so far, we now turn to the case where $k=0$.

The following lemma will be used several times and
utilizes
some of the concepts introduced in Sections~\ref{sec:background} and~\ref{sec:prelims}. It is a generalization of \cite[Main Lemma]{APS}, and may also be found in \cite{INOT} with a different proof.

\begin{lemma}\label{lem: f commutes with core}
Let $n=2^t+m$ with $m<2^t$,  and let $\ell\in\mathbb{N}_0$ be such that $2^\ell\leq m$. If $\lambda\vdasho n$ then $f_{\ell}(C_{2^t}(\lambda))=C_{2^t}(f_\ell(\lambda))$.
\end{lemma}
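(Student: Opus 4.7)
The plan is to reformulate the identity as a commutativity of hook removals and then verify this commutativity by a bead-slide argument on a $2^\ell$-abacus.

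First I would establish that $C_{2^t}(\la) = f_t(\la)$. Since $2^t\le n < 2^{t+1}$, the $2^t$-weight of $\la$ equals $w_{2^t}(\la) = \lfloor n/2^t\rfloor = 1$, so by Remark~\ref{compatible-hooks} the partition $\la$ contains a unique $2^t$-hook; its removal yields $C_{2^t}(\la)$. By Theorem~\ref{theo: INOT} this unique hook is the odd one, hence $C_{2^t}(\la) = f_t(\la)\in\cO(m)$. The same argument applied to $f_\ell(\la)\in\cO(n-2^\ell)$, whose size $n-2^\ell=2^t+(m-2^\ell)$ still lies strictly between $2^t$ and $2^{t+1}$, gives $C_{2^t}(f_\ell(\la))=f_t(f_\ell(\la))$. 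Hence the lemma reduces to the commutativity
\[
f_\ell f_t(\la) = f_t f_\ell(\la).
\]

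To prove this, I would fix a $2^\ell$-abacus $B$ for $\la$. By Definition~\ref{def: leftslide}, $f_\ell$ is realised on $B$ by sliding the bead $c_\ell(B)$ up one row on some runner $r_\ell$ of $B$. Since $2^t = 2^\ell\cdot 2^{t-\ell}$, the unique $2^t$-hook removal translates on $B$ into sliding a single bead $c_t$ up $2^{t-\ell}$ rows on some runner $r_t$. If $r_\ell\ne r_t$, these two slides act on disjoint runners of $B$, so they commute as abacus operations, yielding $f_\ell f_t(\la)=f_t f_\ell(\la)$.

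To establish $r_\ell\ne r_t$, I would invoke Lemma~\ref{lem:R-arithmetic}. Write $n = 2^{a_1}+\ldots+2^{a_r}$ with $a_1>\ldots>a_r\ge 0$ and $a_1=t$. The runners of $B$ correspond (up to relabelling) to the components of $\Qd{\ell}{\la}=(\la_1^{(\ell)},\ldots,\la_{2^\ell}^{(\ell)})$. Applying Lemma~\ref{lem:R-arithmetic} with $k=\ell$ and $j=t$, the component $R_\ell^t(\la)=\la_u^{(\ell)}$ modified by $f_t$ is the unique one whose size has $2^{t-\ell}$ as a binary digit, determining $r_t$. With $k=\ell$ and $j=\ell$, the component $R_\ell^\ell(\la)=\la_s^{(\ell)}$ modified by $f_\ell$ is the unique one whose size has $2^{a_{t^\star}-\ell}$ as a binary digit, where $a_{t^\star}=\min\{a_i:a_i\ge\ell\}$, determining $r_\ell$. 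The hypothesis $2^\ell\le m$ ensures some $a_i$ with $i\ge 2$ satisfies $a_i\ge\ell$, so $a_{t^\star}\le a_2<t$; hence the binary digits $2^{a_{t^\star}-\ell}$ and $2^{t-\ell}$ are distinct. By Theorem~\ref{thm:oddcriterion} the sizes $|\la_i^{(\ell)}|$ form a $2$-disjoint composition of $\lfloor n/2^\ell\rfloor$, so these distinct digits lie in distinct components; thus $s\ne u$ and $r_\ell\ne r_t$. The main obstacle is this last step, where one must track precisely which component of the $2^\ell$-quotient is affected by each hook removal via Lemma~\ref{lem:R-arithmetic}; the crucial input is the hypothesis $2^\ell\le m$, which is exactly what forces $a_{t^\star}<t$ and separates the two runners.
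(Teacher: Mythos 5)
Your reduction to the commutativity $f_\ell f_t(\la)=f_t f_\ell(\la)$ is correct and clean: since $2^t\le n<2^{t+1}$ and $2^t\le n-2^\ell$, both $\la$ and $f_\ell(\la)$ have $2^t$-weight $1$, so their unique $2^t$-hooks are the odd ones and the two sides of the lemma are $f_\ell f_t(\la)$ and $f_t f_\ell(\la)$. The problem lies in the last step. From Theorem~\ref{thm:oddcriterion}(iii) you only get that the sizes $|\la_i^{(\ell)}|$ are \emph{pairwise} $2$-disjoint; this does not imply that two distinct binary digits of $\lfloor n/2^\ell\rfloor$ must lie in two \emph{different} components. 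A single component may well carry both the top digit $2^{t-\ell}$ and the bottom digit $2^{a_{t^\star}-\ell}$. Concretely, take $n=6$, $t=2$, $\ell=1$ and $\la=(5,1)$: here $\lfloor n/2\rfloor=3$ and the $1$-type is $(3,0)$, so the single nonempty component has size $3=2+1$ and contains both digits; on the $2$-abacus with $\beta$-set $\{1,6\}$ the odd $2$-hook and the odd $4$-hook are both recorded by moving the bead at position $6$ on runner $0$. So $s=u$ and $r_\ell=r_t$ can occur, and your "disjoint runners" commutation argument does not apply. This case arises precisely when the largest and smallest binary digits of $\lfloor n/2^\ell\rfloor$ sit in the same part of the $\ell$-type, which happens for many $\la$.

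The lemma is still true in that case, but proving it there is not free: on the common runner the two operations become $f_0$ and $f_{t-\ell}$ acting on the corresponding quotient component $\mu_x$, with $2^{t-\ell}$ the \emph{largest} binary digit of $|\mu_x|$, i.e.\ you are back to an instance of the very statement being proved (with $\ell=0$ and smaller $t$). One could set up an induction along these lines, but you have not done so, and you would also need to justify carefully that the composite bead slides really compute $f_\ell f_t$ and $f_t f_\ell$ (this uses the uniqueness in Theorem~\ref{theo: INOT} together with the oddness of the intermediate and final partitions --- a point you pass over even in the distinct-runner case). For comparison, the paper avoids the dichotomy altogether by working on a $2^t$-abacus, where (since $m<2^t$) the core lives in row $0$, the $2^t$-hook removal is a vertical slide of one bead by one row, and the $2^\ell$-hook removal is a horizontal slide within row $0$; the interaction of these two transverse moves can then be analysed directly by a short case distinction on which runner carries the descended bead.
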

\begin{proof}
Let $B$ be a normalized $2^t$-abacus for $\gamma=C_{2^t}(\lambda)$.
By Remark~\ref{compatible-hooks}, we have that $\la$ contains a unique $2^t$-hook and thus $\gamma \vdasho m$ by Theorem \ref{theo: INOT}.
Using Definition~\ref{def: leftslide}
we see that if $c=c_{\ell}(B)$ then $B(2^\ell, \leftarrow c)$ is a $2^t$-abacus configuration for $f_\ell(\gamma)$.
Lemma~\ref{odd-hook-notations} with $k=\ell$ and $m<e=2^t$ shows that $c$ is in position $(0,r_{\ell}(B))$ where
$r_\ell(\gamma)=r_\ell(B) \in \{2^{\ell},\ldots, 2^t-1\}$.

Let now $A$ be the abacus configuration for $\lambda$ such that $A^{\uparrow}=B$.
By Lemma~\ref{abacus-facts}, $A$ is obtained from $B$ by sliding one bead down one row; in particular $w_{2^t}(A)=1$.
Let $x\in\{0,1,\ldots, 2^{t}-1\}$ be such that $w_{2^t}(A_x)=1$ and $w_{2^t}(A_y)=0$ for all $y\neq x$.
Let $d=c_{\ell}(A)$.
Thus  $A(2^\ell, \leftarrow d)$ is a $2^t$-abacus for $f_\ell(\lambda)$. Put $j:=r_{\ell}(B)$. 
If $x\notin\{j, j-2^\ell\}$ then we have that $d$ is in position $(0,j)$ of $A$  (really $d=c$), by Lemma~\ref{odd-hook-notations}.
Hence we have that
$A( 2^\ell, \leftarrow d)^{\uparrow}=B(2^\ell, \leftarrow c)$, and therefore we have that
$f_\ell(C_{2^t}(\lambda))=C_{2^t}(f_\ell(\lambda))$, as desired.
On the other hand, if $x=j$ then $d$ is in position $(1, j)$ and again we deduce that $A(2^\ell, \leftarrow d)^{\uparrow}=B(2^\ell, \leftarrow c)$.
A similar argument is used for the case where $x=j-2^\ell$.
\end{proof}

\begin{lemma}
Let $n=2^t+m$ with $2\leq m<2^t$.
Then $\la\in \cG_0(n)$ implies $C_{2^t}(\lambda) \in \cG_0(m)$.
\end{lemma}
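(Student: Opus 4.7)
The plan is to promote the identity $f_0 f_0(\lambda)=f_1(\lambda)$ on the nose through the $2^t$-core operator, using Lemma~\ref{lem: f commutes with core} to swap $C_{2^t}$ past each of $f_0$, $f_0$, and $f_1$. Concretely, applying $C_{2^t}$ to both sides of $f_0 f_0(\lambda)=f_1(\lambda)$ I would aim to derive
\[
f_0 f_0(C_{2^t}(\lambda))=C_{2^t}(f_0 f_0(\lambda))=C_{2^t}(f_1(\lambda))=f_1(C_{2^t}(\lambda)),
\]
which is exactly the statement $C_{2^t}(\lambda)\in\cG_0(m)$.

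The first step is to observe that $C_{2^t}(\lambda)$ is an odd partition of $m$; this follows from Theorem~\ref{theo: INOT} together with Remark~\ref{compatible-hooks}, since $\lambda\vdasho 2^t+m$ with $m<2^t$ forces $w_{2^t}(\lambda)=1$, and the unique odd $2^t$-hook removal produces $C_{2^t}(\lambda)$.

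Next I would invoke Lemma~\ref{lem: f commutes with core} three times. For the right-hand side, applying the lemma with $\ell=1$ to $\lambda$ gives $C_{2^t}(f_1(\lambda))=f_1(C_{2^t}(\lambda))$; this is legal because $m\geq 2$, so $2^1\le m$. For the left-hand side, I first apply the lemma with $\ell=0$ to $\lambda$ to get $C_{2^t}(f_0(\lambda))=f_0(C_{2^t}(\lambda))$, and then again with $\ell=0$ to $f_0(\lambda)\vdasho 2^t+(m-1)$, which satisfies the hypotheses since $m-1<2^t$ and $m-1\geq 1$ (using $m\ge 2$). Chaining the last two equalities yields $C_{2^t}(f_0 f_0(\lambda))=f_0 f_0(C_{2^t}(\lambda))$, and combining with the right-hand side identity finishes the proof.

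The only real obstacle is bookkeeping: checking that each intermediate partition meets the hypotheses of Lemma~\ref{lem: f commutes with core}, and in particular that the assumption $m\ge 2$ in the statement is precisely what licenses the second $f_0$ step on the left and the $f_1$ step on the right. Once those checks are done, the proof is a short diagram chase, with no combinatorial computation required.
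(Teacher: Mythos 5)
Your proposal is correct and is exactly the paper's argument: the paper's proof consists of the single line ``This is a consequence of Lemma~\ref{lem: f commutes with core}'', and your write-up simply spells out the three applications of that lemma (with $\ell=1$ once and $\ell=0$ twice) together with the hypothesis checks that $m\ge 2$ makes legitimate. Nothing further is needed.
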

\begin{proof}
This is a consequence of Lemma~\ref{lem: f commutes with core}.
\end{proof}

For the notation in the following results we refer to Definition~\ref{def: leftslide} and Lemma~\ref{odd-hook-notations}.

\begin{lemma}\label{lem: goodcores}
Let $2\leq m<2^t$ and let $\gamma\vdasho m$.
Let $B$ be a normalized $2^t$-abacus for $\gamma$.
Then
$\gamma \in \cG_0(m)$
if and only if $r_0(B)=r_1(B)$ or $r_0(B)=r_1(B)-1$.
\end{lemma}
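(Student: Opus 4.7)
The plan is to carry out the entire argument by direct inspection of the normalized $2^t$-abacus $B$ for $\gamma$. Since $m<2^t$, Lemma~\ref{abacus-facts}(3) places the first gap of $B$ at $(0,0)$ and all other beads of row~$0$ at columns equal to the first-column hook lengths of $\gamma$; by Lemma~\ref{odd-hook-notations} the distinguished beads $c_0(B)$ and $c_1(B)$ sit at $(0,r_0)$ and $(0,r_1)$. The three operations become localised bead-slides: the single slide realising $f_0$ modifies $B$ at exactly the two positions $(0,r_0)$ (bead $\to$ gap) and $(0,r_0-1)$ (gap $\to$ bead), producing an abacus $B'$ for $\mu := f_0(\gamma)$; the double slide realising $f_1$ modifies $B$ at exactly $(0,r_1)$ and $(0,r_1-2)$. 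Writing $p_2$ for the column of $c_0(B')$, the second $f_0$ modifies $B'$ at $(0,p_2)$ and $(0,p_2-1)$, so the abacus of $f_0 f_0(\gamma)$ differs from $B$ precisely at the symmetric difference $\Delta$ of $\{r_0,r_0-1\}$ and $\{p_2,p_2-1\}$.

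For the forward direction I argue numerically. If $f_0 f_0(\gamma)=f_1(\gamma)$ then $\Delta=\{r_1,r_1-2\}$ has cardinality~$2$, forcing the two $2$-element sets above to overlap in exactly one element. The overlap $r_0=p_2$ is ruled out because $(0,r_0)$ is a gap and $(0,p_2)$ a bead of $B'$, and $r_0-1=p_2-1$ is equivalent to this. Hence either (a) $p_2=r_0-1$ (the same bead is slid twice), giving $\Delta=\{r_0,r_0-2\}$ and forcing $r_1=r_0$; or (b) $p_2=r_0+1$ (two neighbouring beads are slid), giving $\Delta=\{r_0+1,r_0-1\}$ and forcing $r_1=r_0+1$, i.e.\ $r_0=r_1-1$.

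For the converse, given $r_0=r_1$ or $r_0=r_1-1$, I exhibit a single left-slide of length $1$ in $B'$ — moving the bead at $(0,r_0-1)$ in case~(a), or at $(0,r_1)$ in case~(b) — whose target is a gap of $B'$ (this is checked directly, using that $(0,r_1-2)$ is a gap of $B$ by validity of $f_1$) and whose effect on $B$ is precisely the effect of $f_1$. Since $f_1(\gamma)$ is odd by Theorem~\ref{theo: INOT}, this slide realises the removal of some $1$-hook from $\mu$ producing an odd partition; the uniqueness clause of Theorem~\ref{theo: INOT} then identifies it with the action of $f_0$ on $\mu$, so $f_0(\mu)=f_1(\gamma)$. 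The main delicacy is the edge case $r_0=1$, where the first slide fills the first gap of $B$ and $B'$ ceases to be normalised; fortunately every step of the argument is phrased in terms of raw bead positions rather than in terms of any $r_0(B')$, so no re-normalisation is required and the book-keeping goes through unchanged.
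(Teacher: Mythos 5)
Your proof is correct, and its ``if'' direction coincides with the paper's: in both, one exhibits the single left-slide in $B'=B(1,\leftarrow c_0(B))$ whose composite effect on $B$ equals that of the $f_1$-slide, and then invokes the uniqueness clause of Theorem~\ref{theo: INOT} to identify that slide with $f_0$ acting on $f_0(\gamma)$. Where you genuinely diverge is the ``only if'' direction. The paper argues by contraposition: assuming $r_0(B)\notin\{r_1(B),r_1(B)-1\}$, it splits into cases according to whether position $(0,r_1(B)-1)$ of $B$ is occupied and in each case locates one position, namely $(0,r_1(B)-2)$, at which the abaci for $f_0f_0(\gamma)$ and $f_1(\gamma)$ must disagree (the second case is only sketched there). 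You instead record the full set of positions where each derived abacus differs from $B$ --- the symmetric difference $\{r_0,r_0-1\}\,\triangle\,\{p_2,p_2-1\}$ for $f_0f_0$ versus $\{r_1,r_1-2\}$ for $f_1$ --- and equate the two sets, which, after ruling out $p_2=r_0$, forces $p_2\in\{r_0-1,r_0+1\}$ and hence $r_1\in\{r_0,r_0+1\}$. This toggle-parity bookkeeping is more systematic and buys you a uniform treatment of what the paper handles by ad hoc case analysis. Two small points you leave implicit: equality of the partitions forces equality of the two derived abacus configurations, which is exactly Lemma~\ref{abacus-facts}(4) since both are obtained from $B$ by slides and so have the same number of beads; and all slides stay in row~$0$ because $r_0,p_2\ge 1$ and $r_1\ge 2$, which is also what makes your remark about the edge case $r_0=1$ (loss of normalization of $B'$) harmless.
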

\begin{proof}
If $r_0(B)=r_1(B)=:r$ then there is a bead in position $(0,r)$ of $B$ (call it $c$) and there are gaps in positions $(0, r-2)$ and $(0, r-1)$ of $B$. Moreover, by definition we have that $B^0:=B(1, \leftarrow c)$ and $B^1:=B(2, \leftarrow c)$ are $2^t$-abaci for $f_0(\gamma)$ and $f_1(\gamma)$ respectively.
Clearly $B^1=B^0(1, \leftarrow c)$. Since $B^1$ is a $2^t$-abacus for an odd partition we deduce from Theorem~\ref{theo: INOT} that this partition is $f_0 f_0(\gamma)$. Hence $f_0 f_0(\gamma)=f_1(\gamma)$.

If $r:=r_0(B)=r_1(B)-1$ then we have beads $c$ and $d$ in positions $(0, r)$ and $(0, r+1)$ of $B$ respectively, and there is a gap in position $(0, r-1)$. By construction $$B^1:=B(2,\leftarrow d)=B^0(1, \leftarrow d'), $$
where $B^0:=B(1, \leftarrow c)$ and $d'$ is the bead in position $(0, r+1)$ of $B^0$.
Arguing as in the previous case we deduce that $f_0 f_0(\gamma)=f_1(\gamma)$.

In order to prove the converse let us suppose that $f_0f_0(\gamma)=f_1(\gamma)$ and let $r:=r_1(B)\in\{2, \ldots, 2^t-1\}$ be such that $r_0(B)\notin\{r, r-1\}$. Let $c$ be the bead in position $(0, r)$ of $B$, and observe that position $(0, r-2)$ of $B$ is empty. Hence $B^1:=B(2, \leftarrow c)$ is the $2^t$-abacus configuration for $f_1(\gamma)$ and has a bead in position $(0, r-2)$.

Suppose that position $(0, r-1)$ of $B$ is empty.
Since $r_0(B)\notin\{r, r-1\}$ we deduce that positions $(0, r-2)$ and $(0, r-1)$ are empty in $B^0=B(1, \leftarrow d)$, where $d$ is the bead in position $(0, r_0(B))$ and $B^0$ is the $2^t$-abacus for $f_0(\gamma)$. It follows that the abacus configuration $B^{0,0}$ for $f_0f_0(\gamma)$ obtained from $B^0$ by sliding one bead to its left has a gap in position $(0, r-2)$ and therefore we have $B^{0,0}\neq B^1$ and hence $f_0 f_0(\gamma)\neq f_1(\gamma)$.

We still have to consider the case where there is a bead in position $(0, r-1)$ of $B$. Again we deduce that $f_0f_0(\gamma)\neq f_1(\gamma)$ by arguing in a completely similar fashion as done above.
\end{proof}

For an odd partition $\gamma$, we denote by $\mathcal{E}(\gamma, 2^t)$ the set of all odd partitions that are obtained from $\gamma$ by adjoining a $2^t$-hook to $\gamma$. Recall that if $|\gamma|<2^t$ then $|\mathcal{E}(\gamma, 2^t)|=2^t$, by the main result of \cite{B}.

\begin{proposition}\label{prop: bases}
Let $n=2^t+m$ with $2\leq m< 2^t$.
Let $\gamma\in \cG_0(m)$
and let $\lambda\in\mathcal{E}(\gamma, 2^t)$.
Let $B$ be a normalized $2^t$-abacus for $\gamma$, and let $A$ be the $2^t$-abacus for $\lambda$ such that $A^\uparrow=B$. Let $x\in\{0, \ldots, 2^t-1\}$ be such that $w_{2^t}(A_x)=1$.
\begin{itemize}
\item[(i)] If $r_0(B)=r_1(B)=:r$, then
$\la\in \cG_0(n)$
if and only if $x\notin\{r-2, r-1\}$.
\item[(ii)] If $r_0(B)=r_1(B)-1=:r$, then
$\la\in \cG_0(n)$
if and only if $x\notin\{r, r+1\}$.
\end{itemize}
\end{proposition}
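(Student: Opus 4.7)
The strategy is as follows. Applying Lemma~\ref{lem: f commutes with core} twice to $\lambda$ yields
\[
C_{2^t}(f_0f_0(\lambda))=f_0f_0(\gamma)=f_1(\gamma)=C_{2^t}(f_1(\lambda)),
\]
where the middle equality uses the hypothesis $\gamma\in\cG_0(m)$. Hence $f_0f_0(\lambda)$ and $f_1(\lambda)$ share the same $2^t$-core (with normalized $2^t$-abacus $B^{00}=B^1$), and since each is a partition of $n-2=2^t+(m-2)$ with a core of size $m-2<2^t$, each has $2^t$-weight equal to $1$. Realising $A^1:=A(2,\leftarrow c_1(A))$ and $A^{00}:=A^0(1,\leftarrow c_0(A^0))$ on an abacus with the same bead-count as $A$, the equality $A^1=A^{00}$ is therefore equivalent to the single excess down-shift above $B^1$ occupying the same runner in both. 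The entire proof reduces to identifying this runner in each situation.

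To locate the beads $c_0(A)$, $c_1(A)$ and $c_0(A^0)$, the main tool is Lemma~\ref{lem: f commutes with core}, used once on $A$ (whose core is $B$) and once on $A^0$ (whose core is $B^0$). For the second application one needs $r_0(B^0)$, which is read off from the proof of Lemma~\ref{lem: goodcores}: it equals $r-1$ in case~(i) and $r+1$ in case~(ii), being the unique column whose row-$0$ bead in $B^0$ must slide left by $1$ to produce $B^1$. Once $r_0(B^0)$ and the runner of the excess down-shift of $A^0$ are known (the latter is visible directly from $A^0$), Lemma~\ref{lem: f commutes with core} determines $c_0(A^0)$.

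In case~(i), for $x\notin\{r-2,r-1,r\}$ both $c_0(A)$ and $c_1(A)$ are the bead at $(0,r)$, and the row-$0$ slides producing $A^0,A^{00},A^1$ do not involve runner $x$; a direct comparison gives $A^{00}=A^1$. For $x=r$ the bead originally at $(0,r)$ now sits at $(1,r)$ and plays the role of both $c_0(A)$ and $c_1(A)$; the same sequence of slides, now in row $1$, again gives $A^{00}=A^1$. For $x\in\{r-2,r-1\}$ the target of the intended row-$0$ slide is blocked by the freshly down-shifted bead, so the unique valid replacement slide uses a bead in row $-1$, and a short runner-count check shows that the excess down-shifts in $A^1$ and $A^{00}$ land on different runners, hence $A^{00}\neq A^1$. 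This settles case~(i). Case~(ii) follows by the same scheme: the generic $x$'s and $x=r-1$ give matching configurations, while $x\in\{r,r+1\}$ (the two positions where a source bead has been pushed into row $1$) produce distinct runners for the excess down-shift.

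The genuine technical point is the boundary subcases, where the natural row-$0$ bead cannot be slid because its target is occupied by the new bead on runner~$x$. There one must identify the correct replacement bead (lying one row above, in row $-1$ or row $1$) by combining the uniqueness of the odd hook (Theorem~\ref{theo: INOT}) with Lemma~\ref{lem: f commutes with core}, and then perform the runner-count bookkeeping that distinguishes $A^1$ from $A^{00}$; this has to be done individually in each of the four boundary subcases (two in case~(i) and two in case~(ii)). The remainder of the argument is direct tracking on the $2^t$-abacus.
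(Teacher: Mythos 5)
Your proposal is correct and follows essentially the same route as the paper's proof: a subcase analysis on the runner $x$ carrying the down-shifted bead, tracking on the $2^t$-abacus which beads realize $f_0$, $f_0f_0$ and $f_1$ (via the uniqueness in Theorem~\ref{theo: INOT} together with Lemma~\ref{lem:maxhook}(3), exactly as you do) and comparing the resulting configurations, with the boundary values of $x$ forcing the replacement slide in an adjacent row. Your preliminary observation that $f_0f_0(\lambda)$ and $f_1(\lambda)$ automatically share the $2^t$-core $f_1(\gamma)$ and have weight one, so that equality reduces to comparing the single runner of the excess down-shift, is a mild streamlining of the paper's direct configuration comparison, but the substance of the argument is identical.
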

\begin{proof}
Suppose we are in case (i) where $r_0(B)=r_1(B)=:r$, and let us assume that $x\notin\{r-2,r-1,r\}$.
Let $c$ be the bead in position $(0,r)$ of $A$. Using Lemma~\ref{lem:maxhook}(3) we get that $A^0:=A(1, \leftarrow c)$ and $A^1:=A(2, \leftarrow c)$ are $2^t$-abacus configurations for $f_0(\lambda)$ and $f_1(\lambda)$, respectively.
Let $c'$ be the bead in position $(0,r-1)$ of $A^0$.
It follows that $A^{0,0}:=A^0(1,\leftarrow c')$ is a $2^t$-abacus configuration for $f_0f_0(\lambda)$.
Since $A^{0,0}=A^1$ we deduce that $f_0f_0(\lambda)=f_1(\lambda)$.
If $x=r$, we let $c$ be the bead in position $(1,r)$ of $A$.
Arguing exactly as above, we deduce that $f_0f_0(\lambda)=f_1(\lambda)$, also in this case.

Suppose now that $x=r-1$. Let $c$ be the bead in position $(0,r)$ of $A$ and let $d$ be the bead in position $(-1,r)$ of $A$.
Using Lemma~\ref{lem:maxhook}(3), we observe that $A^1:=A(2,\leftarrow c)$ is a $2^t$-abacus configuration for $f_1(\lambda)$.
On the  other hand we notice that $A^0:=A(1,\leftarrow d)$ is a $2^t$-abacus configuration for $f_0(\lambda)$.
It follows that $A^{0,0}:=A^0(1,\leftarrow e)$ is a $2^t$-abacus configuration for $f_0f_0(\lambda)$, where we denoted by $e$ the bead in position $(0,r-1)$ of $A^0$. Since $A^{0,0}\neq A^1$ are two abacus configurations with the same number of beads, we deduce that $f_0f_0(\lambda)\neq f_1(\lambda)$, by Lemma~\ref{abacus-facts}. With very similar ideas we deal with the case where $x=r-2$.

\smallskip

In order to prove the statement in case (ii) we use a very similar argument to the one used above to deal with case (i).
\end{proof}

For $n\in \N$ of the form $n=2^t$ or $n=2^t+1$,
the numbers $G_0(n)$ may easily be computed,
and these numbers will
provide crucial input in the general case.

\begin{proposition}\label{prop: basecases}

We have $G_0(1)=1$, $G_0(2)=2=G_0(3)$.

For $t>1$, the following holds.
\begin{itemize}
\item[{(a)}]
Let $\lambda=(2^t-u,1^u)\vdasho 2^t$,
for some $u\in \{0,1,\ldots,2^t-1\}$.
Then
$\la\in \cG_0(2^t)$
if and only if $u \equiv 0$ or $u \equiv 3 \mod 4$.
In particular, $G_0(2^t)=2^{t-1}$.
\item[{(b)}]
Let $\lambda\vdasho 2^t+1$.
Then
$\la\in \cG_0(2^t+1)$
if and only if $\lambda$ or $\lambda'$ is $(2^t+1)$ or $(2^t-1,2)$.
In particular, $G_0(2^t+1)=4$.
\end{itemize}
\end{proposition}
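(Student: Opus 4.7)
The plan is to handle $n \in \{1,2,3\}$ by direct inspection of the one or two odd partitions, and then to prove parts~(a) and~(b) separately for $t > 1$ by combinatorial analysis. For $n = 1$ there is only the partition $(1)$; for $n \in \{2,3\}$ the two odd partitions $(n)$ and $(1^n)$ each satisfy $f_0 f_0(\la) = f_1(\la)$, with the common value being the unique odd partition of $n-2$, yielding $G_0(1) = 1$ and $G_0(2) = G_0(3) = 2$.

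For part~(a), fix $\la = (2^t - u, 1^u)$ with $u \in \{0,\ldots,2^t - 1\}$. Lemma~\ref{lem: hooks} gives $f_1(\la)$ in closed form and expresses $f_0(\la)$ as a hook of $2^t - 1$. To compute $f_0(f_0(\la))$, I would identify which of the two removable corners of this hook yields an odd partition by comparing the parities of two binomial coefficients of the form $\binom{2^t - 3}{j}$; since $2^t - 3 = \underbrace{1\cdots 1}_{t-1}01_2$, Lucas's theorem says such a binomial is odd iff bit~$1$ of $j$ vanishes. Splitting by $u \bmod 4$ yields
\[
f_0 f_0(\la) = \begin{cases}
(2^t - u - 2,\, 1^u), & u \equiv 0 \pmod 4, \\
(2^t - u - 1,\, 1^{u-1}), & u \equiv 1 \pmod 4, \\
(2^t - u - 1,\, 1^{u-1}), & u \equiv 2 \pmod 4, \\
(2^t - u,\, 1^{u-2}), & u \equiv 3 \pmod 4.
\end{cases}
\]
A side-by-side comparison with $f_1(\la)$ (also explicit via Lemma~\ref{lem: hooks}) shows equality precisely when $u \equiv 0$ or $3 \pmod 4$, giving $G_0(2^t) = 2 \cdot 2^{t-2} = 2^{t-1}$.

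For part~(b), every odd $\la \vdasho 2^t + 1$ satisfies $C_2(\la) = (1)$, with $Q_2(\la)$ consisting of one nonempty hook $\mu = (2^{t-1} - v, 1^v)$ of $2^{t-1}$ on one runner. Three observations drive the argument: (i)~$f_0 f_0(\la)$ is always a hook of $2^t - 1$, since $f_0(\la)$ is a hook of $2^t$ by Theorem~\ref{theo: INOT} and $f_0$ preserves hooks by Lemma~\ref{lem: hooks}; (ii)~a direct $2$-abacus calculation shows that any odd hook $(a, 1^b) \vdasho 2^t - 1$ has $Q_2((a, 1^b)) = ((2^{t-1} - b/2 - 1),\, (1^{b/2}))$ up to runner order, so \emph{both} components are nonempty except in the two extreme cases $(2^t - 1)$ and $(1^{2^t - 1})$; and (iii)~$f_1(\la)$ replaces the unique nonempty $\mu$ in $Q_2(\la)$ by the nonempty hook $f_0(\mu)$ of $2^{t-1} - 1$, so $Q_2(f_1(\la))$ has \emph{exactly one} nonempty component. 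Combining (i)--(iii), the equation $f_0 f_0(\la) = f_1(\la)$ forces $f_0 f_0(\la) \in \{(2^t - 1),\, (1^{2^t - 1})\}$; Lemma~\ref{lem: hooks} then forces $f_0(\la) \in \{(2^t),\, (2^t - 1, 1),\, (2, 1^{2^t - 2}),\, (1^{2^t})\}$, and a short check of addable nodes pinpoints the unique odd preimage of each, giving the four candidates $(2^t + 1),\, (2^t - 1, 2),\, (2^2, 1^{2^t - 3})$ and $(1^{2^t + 1})$. For each candidate one verifies $f_0 f_0(\la) = f_1(\la)$ directly, checking that the runner of $f_0(\mu)$ in $Q_2(f_1(\la))$ matches the nonempty runner of $Q_2(f_0 f_0(\la))$; hence $G_0(2^t + 1) = 4$.

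The main obstacle is observation~(ii): the $2$-abacus computation for odd hooks of $2^t - 1$ and the careful tracking of which runner carries the nonempty component of the $2$-quotient. Once this one- versus two-component dichotomy is established, the reduction to the four named partitions is immediate, and the final runner-matching verifications for the four candidates are short.
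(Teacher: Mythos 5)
Your handling of the small cases and of part (a) is essentially the paper's own argument: the same case split on $u \bmod 4$, the same explicit expressions for $f_0f_0(\la)$ and $f_1(\la)$ derived from Lemma~\ref{lem: hooks} and the parity of $\binom{2^t-3}{j}$ (your table for $f_0f_0(\la)$ agrees with the paper's), and the same count $2^{t-1}$. Part (b), however, takes a genuinely different route. The paper notes that the odd partitions of $2^t+1$ outside the four exceptional ones are exactly $(2^t-u,2,1^{u-1})$ with $2\le u\le 2^t-3$, and for these $f_1(\la)$ is not a hook partition while $f_0f_0(\la)$ always is (being obtained from the hook $f_0(\la)\vdasho 2^t$ by removing a node); this rules out equality in one line. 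You instead compare $2$-quotients: $Q_2(f_1(\la))$ has exactly one nonempty component, whereas the $2$-quotient of an odd hook of $2^t-1$ has two nonempty components except for $(2^t-1)$ and $(1^{2^t-1})$, which forces $f_0f_0(\la)$ into these two extremes and then pins down the four candidates by inverting $f_0$ on hooks. Both arguments are correct and yield the same four partitions. The paper's version is shorter and needs no abacus work; yours requires the explicit $2$-quotient of an odd hook of $2^t-1$ (which you compute correctly: the components are $(2^{t-1}-1-b/2)$ and $(1^{b/2})$ for leg length $b$), but in exchange it explains structurally, in the quotient language that drives the rest of the paper, why only the two ``extreme'' hooks can occur as $f_1(\la)$.
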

\begin{proof}
We may assume that $t>1$.

(a)
Using Lemma~\ref{lem: hooks} we compute for $\lambda=(2^t-u,1^u)$:
$$
f_0f_0(\lambda)=\left\{
\begin{array}{ll}
(2^t-u-2,1^u) & \text{for } u\equiv 0 \mod 4\\
(2^t-u-1,1^{u-1}) & \text{for } u\equiv 1 \text{ or } 2 \mod 4\\
(2^t-u,1^{u-2}) & \text{for } u\equiv 3 \mod 4\\
\end{array}
\right.
$$
and
$$
f_1(\lambda)=\left\{
\begin{array}{ll}
(2^t-u-2,1^u) & \text{for } u\equiv 0 \text{ or } 1 \mod 4\\
(2^t-u,1^{u-2}) & \text{for } u\equiv 2 \text{ or } 3\mod 4\\
\end{array}
\right.
$$
This implies the claim.
\smallskip

(b)
Note that $f_0(\lambda)$ is a hook. In the stated four cases, it is easy to see that
$\la\in \cG_0(2^t+1)$.
When we are not in one of these cases,
we have $\lambda=(2^t-u,2,1^{u-1})$ for some $u\in \{ 2,\ldots, 2^t-3\}$,
and $f_1(\lambda)$ is not a hook, so cannot coincide with $f_0f_0(\lambda)$.
\end{proof}

Thus the following gives an explicit formula for $G_0(n)$
which then allows to compute all values $G_k(n)$.

\begin{corollary}\label{cor:G0-count}
Let $n=2^{a_1}+ 2^{a_2}+\ldots + 2^{a_r}+\varepsilon \ge 2$,
where $a_1>\cdots >a_r>0$ and $\varepsilon\in\{0,1\}$. Then
$$G_0(n)=
G_0(2^{a_r}+\varepsilon)\cdot\prod_{j=1}^{r-1}(2^{a_j}-2).$$

In particular, $\cG_0(n)=\cO(n)$ only holds for $n\in \{2,3,5\}$,
and $\cG_0(n)=\{(n),(1^n)\}$ only holds for $n\in \{2,3,4\}$.
\end{corollary}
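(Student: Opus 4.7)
The plan is to prove the closed formula by induction on $r$, and then read off both ``in particular'' statements by comparing it with $|\cO(n)|=\prod_{i=1}^{r}2^{a_i}$, the count of odd partitions of $n$ given by Lemma~\ref{lem:odd-value} together with \cite{B}; when $\varepsilon=1$ the additional binary digit $2^0$ contributes a factor $1$ and so is absorbed harmlessly.

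The base case $r=1$ is the tautology $G_0(2^{a_1}+\varepsilon)=G_0(2^{a_1}+\varepsilon)$. For $r\ge 2$, set $t=a_1$ and $m=n-2^t=2^{a_2}+\cdots+2^{a_r}+\varepsilon$, so $2\le m<2^t$. The lemma immediately preceding Lemma~\ref{lem: goodcores} says that the assignment $\la\mapsto C_{2^t}(\la)$ takes $\cG_0(n)$ into $\cG_0(m)$; call this map $\Phi$. Since $\la\vdasho n$ has $2^t$-weight $\lfloor n/2^t\rfloor=1$ (by Theorem~\ref{theo: INOT} combined with Remark~\ref{compatible-hooks}), every fibre $\Phi^{-1}(\gamma)$ is of the form $\mathcal{E}(\gamma,2^t)\cap\cG_0(n)$. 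Proposition~\ref{prop: bases} parameterises $\mathcal{E}(\gamma,2^t)$ by the runner index $x\in\{0,\ldots,2^t-1\}$ of the adjoined $2^t$-hook and identifies exactly two forbidden values of $x$ (those from case~(i) or case~(ii) of that proposition). These two values are distinct and genuinely lie inside $\{0,\ldots,2^t-1\}$: the numbers $r_0(B),r_1(B)$ are first-column hook lengths of $\gamma\vdasho m$ by Lemma~\ref{odd-hook-notations}, so $r_1(B)\ge 2$ and $r_0(B)\le m<2^t$. Hence $|\Phi^{-1}(\gamma)|=2^t-2$, giving the recursion
\[
G_0(n)=(2^{a_1}-2)\,G_0(m),
\]
and the inductive hypothesis applied to $m$ closes the induction.

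For the first ``in particular'' claim I compare $G_0(n)$ with $|\cO(n)|$. When $r\ge 2$, every factor $(2^{a_j}-2)/2^{a_j}$ for $j<r$ is strictly less than $1$, and Proposition~\ref{prop: basecases} gives $G_0(2^{a_r}+\varepsilon)\le 2^{a_r}$, so the ratio $G_0(n)/|\cO(n)|$ is strictly less than $1$. The case $r=1$ reduces to solving $G_0(2^{a_1}+\varepsilon)=2^{a_1}$, which Proposition~\ref{prop: basecases} pins down to $n\in\{2,3,5\}$. For the second claim, the partitions $(n)$ and $(1^n)$ always lie in $\cG_0(n)$: they label the trivial and sign characters, whose restrictions are again trivial or sign, so $f_k((n))=(n-2^k)$ and $f_k((1^n))=(1^{n-2^k})$, whence $f_0f_0=f_1$ on both. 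Thus $\cG_0(n)=\{(n),(1^n)\}$ reduces to $G_0(n)=2$; since each factor $(2^{a_j}-2)$ for $j<r$ as well as $G_0(2^{a_r}+\varepsilon)$ is at least $2$, the product formula forces $r=1$, and then Proposition~\ref{prop: basecases} isolates $n\in\{2,3,4\}$.

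The genuinely delicate point is the boundary check inside the fibre analysis --- that the two forbidden runner indices of Proposition~\ref{prop: bases} are distinct and really belong to $\{0,\ldots,2^t-1\}$, so the count is honestly $2^t-2$ and not smaller after trimming. As noted above, Lemma~\ref{odd-hook-notations} together with $m<2^t$ settles this cleanly.
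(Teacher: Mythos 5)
Your proof is correct and follows exactly the route the paper intends (it states the corollary without proof, as a consequence of the core lemma, Lemma~\ref{lem: goodcores}, Proposition~\ref{prop: bases} giving the fibre count $2^t-2$, and the base values in Proposition~\ref{prop: basecases}). Your boundary check that the two forbidden runner indices are distinct and lie in $\{0,\ldots,2^t-1\}$, and the comparison with $|\cO(n)|=\prod_{j=1}^r 2^{a_j}$ for the two ``in particular'' statements, are exactly the details needed.
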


\begin{corollary}\label{cor:Gk-all}
Let $n\in \N$, $k>0$ with $2^{k+1}\le n$. Then $\cG_k(n)=\cO(n)$ if and only if  $n=2^{k+1}+m$, with $m<2^k$, i.e., if and only if \, $\lfloor n/2^k \rfloor=2$.
\end{corollary}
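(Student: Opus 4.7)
The plan is to reduce to the $k=0$ case via Lemma~\ref{lem: fk2-fk+1}, then translate the condition $f_kf_k(\lambda)=f_{k+1}(\lambda)$ into properties of the $k$-data using Lemma~\ref{lem: reduced-ff-count}, and split the analysis by the parity of $w:=\lfloor n/2^k\rfloor$. In the notation of~\ref{not:33}, $w$ is odd exactly when $p<q$ (so $b_q=0<b_p$), and $w$ is even exactly when $p=q$ (so $b_p=b_q\ge 1$).

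For the \emph{if} direction, assuming $w=2$ means $n=2^{k+1}+m$ with $m<2^k$. The only 2-disjoint weak composition of $2$ into $2^k\ge 2$ parts is a permutation of $(2,0,\ldots,0)$, since two parts equal to $1$ would share the binary digit~$2^0$. Thus any $\lambda\in\cO(n)$ has exactly one nonempty component $\lambda_s$ in $\Qd{k}{\lambda}$, of size $2$, and since $p=q=1$ with $b_p=b_q=1$, Lemma~\ref{lem:R-arithmetic} identifies both $R_k^{k}(\lambda)$ and $R_k^{k+1}(\lambda)$ with $\lambda_s\in\cO(2)=\{(2),(1^2)\}=\cG_0(2)$ (by Proposition~\ref{prop: basecases}). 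Lemma~\ref{lem: fk2-fk+1} then yields $\lambda\in\cG_k(n)$.

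For the converse, assuming $w\ge 3$, I would produce $\lambda\in\cO(n)\setminus\cG_k(n)$ in two subcases. If $w$ is odd, the $k$-type $\tau=(w-1,1,0,\ldots,0)$ is 2-disjoint (since $w-1$ is even) and places $2^{b_p}\ge 2$ in $n_1=w-1$ and $2^{b_q}=1$ in $n_2=1$; Lemma~\ref{lem: reduced-ff-count}(1) then forces $\cG_k(n;\tau)=\emptyset$, and any $\lambda\in\cO(n;\tau)$ suffices. If $w$ is even, then $w\ge 4$ and $w\notin\{2,3,5\}$, so Corollary~\ref{cor:G0-count} supplies some $\mu\in\cO(w)\setminus\cG_0(w)$; with the trivial $k$-type $\tau=(w,0,\ldots,0)$ I would construct $\lambda\in\cO(n;\tau)$ with $\Qd{k}{\lambda}_1=\mu$, and Lemma~\ref{lem: reduced-ff-count}(2) immediately forbids $\lambda\in\cG_k(n;\tau)$.

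The only small point of care, and the closest thing to an obstacle, is to confirm that in each bad case the prescribed $k$-type together with the chosen odd quotient entries is realized by some $\lambda\in\cO(n)$. This follows directly from Theorem~\ref{thm:oddcriterion}: one populates rows $0,\ldots,k-1$ of the 2-core tower with a single $(1)$ entry per binary digit $2^{a_j}$ of $n$ with $a_j<k$, matching the residual size $n-2^k w$. With this bookkeeping in place, the entire argument fits in a few lines.
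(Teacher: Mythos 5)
Your proposal is correct and follows essentially the same route as the paper: both directions come down to the type decomposition, Lemma~\ref{lem: reduced-ff-count} (equivalently Lemma~\ref{lem: fk2-fk+1} via Lemma~\ref{lem:R-arithmetic}), and Corollary~\ref{cor:G0-count}, with your explicit type $(w-1,1,0,\ldots,0)$ in the odd case being exactly the witness the paper invokes implicitly when it notes that $b_p\ne b_q$ forces some $\cG_k(n;\tau)=\emptyset$. Your extra care about realizability of the prescribed $k$-data is a harmless (and valid) addition that the paper leaves tacit.
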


\begin{proof}
Clearly, $\cG_k(n)=\cO(n)$ if and only if $\cG_k(n;\tau)=\cO(n;\tau)$ for all $k$-types $\tau$ of size $w=\lfloor n/2^k \rfloor$.
With the notation in~\ref{not:33}, we get
$b_p=b_q$ by Lemma~\ref{lem: reduced-ff-count},
as otherwise there is a $k$-type $\tau$ with $\cG_k(n;\tau)=\emptyset$.
This implies $b_q>0$ and hence $w$ is even. Taking a $k$-type with $w$ as its only non-zero part, Lemma~\ref{lem: reduced-ff-count} and Corollary~\ref{cor:G0-count} then imply $w=2$.
\end{proof}

We also note the following implication of Corollary~\ref{cor: fk2-fk+1}
where the bound for $w$ is sharp by Corollary~\ref{cor:G0-count}:

\begin{corollary}\label{cor:Gk-small}
Assume $w=\lfloor n/2^k \rfloor\in \{2,3,4\}$.
Let $\lambda\vdasho n$ with
$\Qd{k}{\la}=(\la_1,\ldots,\la_{2^k})$.
Then $\lambda\in \cG_k(n)$ if and only if
there is an $s\in \{1,\ldots,2^k\}$ such that
$\la_s\in \{(w),(1^w)\}$.
\end{corollary}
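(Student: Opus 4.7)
The plan is to apply Corollary~\ref{cor: fk2-fk+1} together with the evaluation of $\cG_0(w)$ for small $w$ supplied by Proposition~\ref{prop: basecases} and Corollary~\ref{cor:G0-count}. The arithmetic simplifies dramatically because $w=\lfloor n/2^k\rfloor\in\{2,3,4\}$ has at most two binary digits.

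First I would unpack Notation~\ref{not:33}: the identity $w=\sum_{i=1}^q 2^{b_i}$ shows that $\{b_1,\ldots,b_q\}$ is precisely the set of positions of the binary digits of $w$, with $b_p$ the smallest nonzero such position and $b_q$ the smallest position overall. Reading off the three cases gives $b_p=b_q=1$ for $w=2$; $(b_p,b_q)=(1,0)$ for $w=3$; and $b_p=b_q=2$ for $w=4$. In each case the binary digits of $w$ are exactly $\{2^{b_p},2^{b_q}\}$ (with the obvious convention when $b_p=b_q$), so $2^{b_p}$ and $2^{b_q}$ together account for all of $w$.

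Next, for any $k$-type $\tau=(n_1,\ldots,n_{2^k})$ of size $w$, the parts $n_j$ are pairwise $2$-disjoint with $\sum_j n_j=w$. If both $2^{b_p}$ and $2^{b_q}$ are binary digits of the same part $n_s$, then by the previous paragraph $n_s=w$ and $n_j=0$ for all $j\ne s$. I would then invoke the equalities $\cG_0(2)=\{(2),(1^2)\}$, $\cG_0(3)=\{(3),(1^3)\}$, $\cG_0(4)=\{(4),(1^4)\}$; the first two hold because $\cO(2)$ and $\cO(3)$ contain only two partitions each (both lying in $\cG_0$ since $G_0(2)=G_0(3)=2$), and the third follows from Proposition~\ref{prop: basecases}(a) applied at $t=2$, where the hook parameter $u\in\{0,\dots,3\}$ must satisfy $u\equiv 0,3\pmod 4$, singling out exactly $u=0,3$, i.e.\ the hooks $(4)$ and $(1^4)$. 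Combining these facts with Corollary~\ref{cor: fk2-fk+1} yields both directions at once: $\la\in\cG_k(n)$ if and only if there exists $s$ with $n_s=w$ and $\la_s\in\cG_0(w)=\{(w),(1^w)\}$, which is the desired statement.

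I do not anticipate a serious obstacle: the result is essentially a combinatorial rephrasing of Corollary~\ref{cor: fk2-fk+1} in the regime where $w$ has at most two binary digits, so the entire content has been prepared by the earlier reduction lemmas. The only item needing a small verification is the identification of $\cG_0(4)$ via the explicit formulae for $f_0 f_0$ and $f_1$ on hooks from Proposition~\ref{prop: basecases}(a).
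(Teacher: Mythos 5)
Your proposal is correct and follows exactly the route the paper intends: the paper presents this corollary as a direct implication of Corollary~\ref{cor: fk2-fk+1} combined with the fact (from Corollary~\ref{cor:G0-count} and Proposition~\ref{prop: basecases}) that $\cG_0(w)=\{(w),(1^w)\}$ precisely for $w\in\{2,3,4\}$. Your observation that $2^{b_p}+2^{b_q}$ exhausts the binary digits of $w$ in this range, forcing the distinguished part of the $k$-type to equal $w$, is the key simplification and matches the paper's reasoning.
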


\begin{example}
We illustrate the efficiency of the formulae for $G_k(n)$ obtained in this section
by considering
an example.

Let $n=24=2^4+2^3$ and let us consider $k=2$.
Referring to Notation~\ref{not:33},
we have $p=q=r=2$.
Hence, Theorem~\ref{theo: G} implies
$$G_{2}(24)=2^2 (w_G(\{2\})+w_G(\{1,2\}))
= 2^2 (2^2\cdot 3 \cdot G_0(2)+G_0(2+2^2)).$$
Now we use Corollary~\ref{cor:G0-count} and Proposition~\ref{prop: basecases} to
obtain the necessary values of $G_0$;
here, we have $G_0(2)=2$ and $G_0(6)=G_0(2)(2^2-2)=4$.
Thus we conclude $G_{2}(24)=112$.

It is illustrative to also go through the arguments again
that tell us how to detect whether a certain $\lambda\in\mathcal{O}(24)$ is such that $f_2f_2(\lambda)=f_3(\lambda)$.
By Lemma~\ref{lem: fk2-fk+1}, we just need to analyze the
$2$-data of $\la$. In particular, in this case we just need to look at $\Qd{2}{\la}=(\lambda_1,\lambda_2,\lambda_3,\lambda_4)$.
Referring to Notation~\ref{not:33}, we observe that $b_p=b_q=3-2=1$ in this case.
Therefore we can either have two non-empty odd partitions $\lambda_i, \lambda_j$ in
$\Qd{2}{\la}$ of size $2$ and $4$, respectively, such that $\lambda_i\in \mathcal{G}_0(2)$ (this gives us $(2^2\cdot G_0(2))\cdot (3\cdot 2^2)$ possibilities for $\lambda$) or we can have a unique non-empty partition $\lambda_i$ in $\Qd{2}{\la}$ of size $6$, such that $\lambda_i\in \mathcal{G}_0(6)$  (this gives us $2^2\cdot G_0(6)$ possibilities for $\lambda$).
\end{example}

We conclude the section by proving Theorem~\ref{ThmA} (below), as promised in the introduction. It shows that
when $\lfloor n/2^k \rfloor\le 4$,
then for $\la \vdasho n$ the information on whether $f_kf_k(\lambda)=f_{k+1}(\lambda)$ holds is encoded in the character table of~$\fS_n$.

\begin{teo}\label{ThmA}
Let $k,n\in \N$ with $2^{k+1}\le n$ and $w=\lfloor n/2^k \rfloor\le 4$; set $m=n-w\cdot 2^k$.
Let $\rho\in\fS_n$ be a product of $w$ $2^k$-cycles,
$\gamma=\omega_m$ on the fixed points of $\rho$; set $g=\rho\gamma\in \fS_n$.
Let $\la\vdasho n$.
Then $f_kf_k(\lambda)=f_{k+1}(\lambda)$ if and only if $|\chi^\lambda(g)|=1$.
\end{teo}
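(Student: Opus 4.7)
The plan is to observe that this theorem is an immediate synthesis of two results already established in the paper, each of which translates one side of the claimed equivalence into the \emph{same} combinatorial condition on the $2^k$-quotient of $\lambda$. So essentially no new work is required; the entire content is repackaging.

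First I would dispose of the range of $w$: the hypothesis $2^{k+1}\le n$ forces $w=\lfloor n/2^k\rfloor\ge 2$, and we are given $w\le 4$, hence $w\in\{2,3,4\}$. Writing $\Qd{k}{\lambda}=(\lambda_1^{(k)},\ldots,\lambda_{2^k}^{(k)})$, Corollary~\ref{cor:Gk-small} then characterizes membership $\lambda\in \cG_k(n)$ (i.e.\ the equality $f_k f_k(\lambda)=f_{k+1}(\lambda)$) exactly as the existence of some $s\in\{1,\ldots,2^k\}$ with $\lambda_s^{(k)}\in\{(w),(1^w)\}$.

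Next I would invoke Corollary~\ref{cor:value1} for the same element $g=\rho\gamma$ defined in the theorem (whose $\rho,\gamma$ are taken exactly as in that corollary). Since $\lambda\vdasho n$, the corollary tells us that $|\chi^\lambda(g)|=1$ if and only if there exists $s\in\{1,\ldots,2^k\}$ with $\lambda_s^{(k)}\in\{(w),(1^w)\}$---the \emph{same} combinatorial condition on $\Qd{k}{\lambda}$.

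The two conditions therefore coincide, and the theorem follows at once. There is no genuine obstacle: the substance of the argument has been packed into Corollary~\ref{cor:Gk-small} (which is where the small-$w$ hypothesis is essential, traced back through Lemma~\ref{lem: reduced-ff-count} to the explicit computation of $G_0$ on small arguments in Proposition~\ref{prop: basecases}) and into Corollary~\ref{cor:value1} (an application of the Murnaghan--Nakayama factorization via Corollary~\ref{cor:farahat}). The only thing to check is that the element $g$ in the theorem matches the one in Corollary~\ref{cor:value1}, which it does by construction.
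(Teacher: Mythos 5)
Your proposal is correct and follows exactly the paper's own argument: the paper's proof likewise combines Corollary~\ref{cor:value1} (giving $|\chi^\lambda(g)|=1$ iff some $\lambda_s^{(k)}\in\{(w),(1^w)\}$) with Corollary~\ref{cor:Gk-small} (giving the same criterion for $\lambda\in\cG_k(n)$ when $w\le 4$). Nothing is missing.
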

\begin{proof}
Let $\Qd{k}{\la} =(\lambda_1,\ldots, \lambda_{2^k})$.
By Corollary~\ref{cor:value1},
$|\chi^{\la}(g)|=1$ if and only if  $\lambda_s\in \{(w),(1^w)\}$
for some $s \in \{1,2,\ldots,2^k \}$.
As $w\le 4$, by Corollary~\ref{cor:Gk-small} this is equivalent to $\la\in \cG_k(n)$,
thus proving the claim.
\end{proof}

\begin{corollary}\label{cor:cortheoA}
Let $\la \vdash_o n$. 
Let $n=2^{a_1}+2^{a_2}+\ldots + 2^{a_r}$ where $a_1>a_2>\cdots >a_r$.
Choose  $k=a_i-1$ for some $i \in \{1,\cdots,r\}$.
Let $\omega_{n,i} \in\fS_n$ be an element of cycle type $(2^{a_1},2^{a_2}\cdots 2^{a_{i-1}},2^k,2^k, 2^{a_{i+1}},\cdots,2^{a_r})$.
If $f_kf_k(\la)=f_{k+1}(\la)$ then $|\chi^\la(\omega_{n,i}) |=1$.
\end{corollary}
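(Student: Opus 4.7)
The plan is to peel off the large cycles $2^{a_1}, \ldots, 2^{a_{i-1}}$ of $\omega_{n,i}$ by iterated application of the Murnaghan--Nakayama formula (in the form of Corollary~\ref{cor:farahat2}), and then invoke Theorem~\ref{ThmA}. Setting $\la^{(0)} := \la$ and $\la^{(j)} := f_{a_j}(\la^{(j-1)})$ for $j = 1, \ldots, i-1$, each $\la^{(j-1)} \in \cO(n - \sum_{\ell<j}2^{a_\ell})$ has $2^{a_j}$ as the largest binary digit of its size, so Theorem~\ref{thm:oddcriterion} yields $w_{2^{a_j}}(\la^{(j-1)}) = 1$ and $C_{2^{a_j}}(\la^{(j-1)}) = \la^{(j)}$. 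Applying Corollary~\ref{cor:farahat2} with $e = 2^{a_j}$ and $\rho$ the unique $2^{a_j}$-cycle in the remaining part of $\omega_{n,i}$, the multinomial coefficient and the character-degree factors all equal $1$ (exactly one component of the $2^{a_j}$-quotient is non-empty, of size $1$), giving $|\chi^{\la^{(j-1)}}(\rho\gamma)| = |\chi^{\la^{(j)}}(\gamma)|$, where $\gamma$ collects the remaining cycles. Composing for $j = 1, \ldots, i-1$ yields
$$
|\chi^\la(\omega_{n,i})| = |\chi^\mu(g')|, \qquad \mu := \la^{(i-1)} \in \cO(n_{i-1}),
$$
where $n_{i-1} := \sum_{j\ge i} 2^{a_j}$ and $g' \in \fS_{n_{i-1}}$ has cycle type $(2^k, 2^k, 2^{a_{i+1}}, \ldots, 2^{a_r})$.

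Since $2^{a_i} = 2^{k+1}$ is the largest binary digit of $n_{i-1}$, we have $n_{i-1} < 2^{k+2}$, so $w := \lfloor n_{i-1}/2^k \rfloor \in \{2,3\}$. I write $g' = \rho\gamma$ with $\rho$ a product of $w$ $2^k$-cycles and $\gamma = \omega_m$ on the $m := n_{i-1} - w\cdot 2^k < 2^k$ fixed points of $\rho$; in the case $a_{i+1} = k$ the $2^{a_{i+1}}$-cycle supplies the third $2^k$-cycle of $\rho$. Since $w \le 4$, Theorem~\ref{ThmA} gives
$$
|\chi^\mu(g')| = 1 \iff \mu \in \cG_k(n_{i-1}).
$$

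It remains to deduce $\mu \in \cG_k(n_{i-1})$ from the hypothesis $\la \in \cG_k(n)$. When $w = 2$ (that is, $a_{i+1} < k$ or $i = r$), Corollary~\ref{cor:Gk-all} gives $\cG_k(n_{i-1}) = \cO(n_{i-1})$, so the containment is automatic (in fact no hypothesis on $\la$ is used in this case). When $w = 3$ (that is, $a_{i+1} = k$), Lemma~\ref{lem: fk2-fk+1} reduces the task to establishing $R_k^k(\mu) = R_k^{k+1}(\mu) \in \cG_0$. By Lemma~\ref{lem:R-arithmetic}, the components $R_k^k$ and $R_k^{k+1}$ of $\Qd{k}{\la}$, respectively $\Qd{k}{\mu}$, are those whose sizes contain the binary digits $1$ and $2$ of $\lfloor|\la|/2^k\rfloor$, respectively $\lfloor|\mu|/2^k\rfloor$. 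Each peeling $f_{a_j}$ for $j<i$ corresponds, again by Lemma~\ref{lem:R-arithmetic}, to a modification of the component whose size contains the larger digit $2^{a_j-k} \ge 4$, and hence leaves the positions of the digits $1$ and $2$ in the $k$-type unchanged. Consequently $R_k^k(\mu) = R_k^{k+1}(\mu)$ if and only if $R_k^k(\la) = R_k^{k+1}(\la)$, and the latter holds by hypothesis. Finally, the common component is an odd partition of size $3$, and $\cG_0(3) = \cO(3)$ by Proposition~\ref{prop: basecases}, so the $\cG_0$-membership is automatic.

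The main obstacle is the case $w=3$, where one must verify via Lemma~\ref{lem:R-arithmetic} that the peeling operations $f_{a_j}$ for $j<i$ act only on components whose sizes carry digits $\ge 4$, hence preserve the positions of the ``small digits'' $1$ and $2$ in the $k$-type. This is precisely what allows the positional condition $R_k^k(\la) = R_k^{k+1}(\la)$ forced by $\la \in \cG_k(n)$ to transfer from $\la$ to $\mu$ and close the argument.
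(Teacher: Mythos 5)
Your proposal is correct, and its first half coincides with the paper's argument: both peel off the cycles $2^{a_1},\ldots,2^{a_{i-1}}$ one at a time (the paper via Lemma~\ref{lem:maxhook}(1), you via Corollary~\ref{cor:farahat2} with $w=1$ --- the same computation) to reduce to $|\chi^{\la}(\omega_{n,i})|=|\chi^{\la^i}(\omega_{n,i}^*)|$ and then invoke Theorem~\ref{ThmA}. The genuine difference is the final transfer of the hypothesis from $\la$ to $\la^i=\mu$: the paper does this in one line with Lemma~\ref{lem: f commutes with core}, since $f_kf_k(\la)=f_{k+1}(\la)$ passes to $2^{a_j}$-cores and hence to $\la^i$, whereas you re-derive the transfer combinatorially from Lemma~\ref{lem:R-arithmetic} and Lemma~\ref{lem: fk2-fk+1}, tracking how the peeling moves act on the $k$-type and splitting into the cases $w=2$ and $w=3$. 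Your route is longer but buys something the paper's does not make explicit: in the case $w=2$ the conclusion $|\chi^\la(\omega_{n,i})|=1$ holds for \emph{every} odd $\la$ with no hypothesis at all, which is exactly what underlies the paper's subsequent counterexample to the converse ($n=12$, $\la=(10,1^2)$). One small citation point: Corollary~\ref{cor:Gk-all} is stated only for $k>0$, so in the degenerate case $a_i=1$, $k=0$ (where necessarily $i=r$ and $n_{i-1}=2$) you should instead quote $\cG_0(2)=\cO(2)$ from Proposition~\ref{prop: basecases} or Corollary~\ref{cor:G0-count}; this does not affect the argument.
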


\begin{proof}
Let $\la=\la^1,\ldots , \la^r$ be the 2-chain in $\la$.
Applying Lemma~\ref{lem:chain-value} $i-1$ times we get that
$|\chi^{\la}(\omega_{n,i}) |=|\chi^{\la^i}(\omega_{n,i}^*)|$
where $\omega_{n,i}^*$ is  an element of cycle type $(2^k,2^k, 2^{a_{i+1}},\cdots 2^{a_r})$.
We get from Theorem~\ref{ThmA} that $|\chi^\la(\omega_{n,i}) |=1$ if and only if $f_kf_k(\la^i)=f_{k+1}(\la^i)$.
By Lemma~\ref{lem: f commutes with core}
$f_kf_k(\la)=f_{k+1}(\la)$ implies $f_kf_k(\la^i)=f_{k+1}(\la^i)$.
\end{proof}

The following example shows that the converse of Corollary~\ref{cor:cortheoA} does not hold.

\begin{example}
Let $n=12=2^3+2^2$ and take $k=1$.
The cycle type of
$\omega_{12,2}$ is $(8,2,2)$.
Let $\la=(10,1^2) \vdash_o 12$.
We have that $|\chi^{\la}(\omega_{12,2}) |=1$.
However, $f_1f_1(\la)=(8)$ and $f_2(\la)=(6,1^2)$.
\end{example}

\section{Commutativity}\label{sec:commutativity}

Let $n,k,\ell$ be nonnegative integers such that
$k<\ell$ and $2^k+2^\ell\leq n$.
We want to investigate for which odd partitions $\lambda\in \cO(n)$
the commutativity relation
$f_kf_\ell(\lambda)=f_\ell f_k(\lambda)$ holds.
We define
\[
\T_{k,\l}(n) =
\{\la \vdash_o n \mid  f_kf_\l(\la) = f_\l f_k(\la) \}
\:,\:
\F_{k,\l}(n) =
\{\la \vdash_o n \mid  f_kf_\l(\la) \ne f_\l f_k(\la) \}
\]
and set $T_{k,\l}(n)=|\T_{k,\l}(n)|$, $F_{k,\l}(n)=|\F_{k,\l}(n)|$.
Furthermore, we set
$\T_{k,\l}= \bigcup_{n\ge 2^k+2^\l} \T_{k,\l}(n)$
and
$\F_{k,\l}= \bigcup_{n\ge 2^k+2^\l} \F_{k,\l}(n)$.

\smallskip

Write $n=2^t+m$ with $0\le m<2^t$.
By \cite[Theorem B]{BGO}
we know that
$\T_{k,\l}(n) = \cO(n)$
if and only if $m<2^k$, or $\ell=t$,
or $n=6$ and $(k,\l)=(0,1)$.
Therefore we may from now on assume that
$n>6$, and that
$\ell, k\in\mathbb{N}_0$ are such that $k<\ell<t$
and $2^k\le m$.

\smallskip

We fix below some notation that will be used
throughout this section.

\begin{notation}\label{not:44}
We write
$n=2^{a_1} + \ldots + 2^{a_r}$, where
$a_1>a_2 > \ldots > a_r$,
and define $p,q \in\{1,\ldots, r\}$
to be maximal such that
$a_p\ge \l$ and $a_q\ge k$, respectively.
Set $b_j=a_j-k$, for $1\le j \le q$, and
$B=\{b_1,\ldots,b_q\}$.

\end{notation}

Using a strategy similar to the one used in Section~\ref{sec: kk,k+1}, we start by observing that we can reduce the study of our problem to the case where the smaller  parameter $k$ is equal to $0$. This is shown in the following two lemmas.

\begin{lemma}\label{lem: k ell 0 ell}
Let $\lambda\vdasho n$ be such that $R_k^{k}(\lambda)\neq R_k^{\ell}(\lambda)$.
Then 
$\la \in \T_{k,\l}(n)$.
\end{lemma}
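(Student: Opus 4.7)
The plan is to exploit the description, given just before Lemma~\ref{lem: hooks}, of how the operator $f_j$ acts on the $k$-data of an odd partition: it replaces the single entry $R_k^j(\la)$ in the last (quotient) row $\Qd{k}{\la}$ by $f_{j-k}(R_k^j(\la))$, leaving every other entry of that row, as well as all the earlier (core) rows, untouched. Since a partition is determined by its $k$-data, it will suffice to show that $f_k$ and $f_\l$ act on \emph{different} positions of $\Qd{k}{\la}$ and that the position at which each one acts does not move after the other has been applied first; then $f_kf_\l(\la)$ and $f_\l f_k(\la)$ will have the same $k$-data, and equality of the two partitions will follow.

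The first step is to locate the positions of $R_k^{k}(\la)$ and $R_k^{\l}(\la)$ inside $\Qd{k}{\la}=(\la_1,\ldots,\la_{2^k})$ by means of Lemma~\ref{lem:R-arithmetic}. Writing $n=2^{a_1}+\cdots+2^{a_r}$, and letting $p,q$ be maximal with $a_p\ge \l$, $a_q\ge k$ (so $p\le q$), those positions are the unique indices $s$ and $t$ whose parts of the $k$-type of $\la$ carry, respectively, the binary digits $2^{a_q-k}$ and $2^{a_p-k}$. A short case analysis shows that $p=q$ automatically forces $s=t$, and that $s=t$ (even when $p<q$) forces $R_k^{k}(\la)=\la_s=\la_t=R_k^{\l}(\la)$; so the hypothesis delivers both $p<q$ and $s\neq t$, which is the qualitative input we need.

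The second step, which is the only genuinely computational one, is to verify that each operator still acts on ``its'' position after the other has been applied. For $R_k^{k}(f_\l(\la))$, Lemma~\ref{lem:R-arithmetic} asks for the smallest binary digit of $\lfloor (n-2^\l)/2^k\rfloor=\lfloor n/2^k\rfloor-2^{\l-k}$; since $a_q<\l$ (the payoff of $p<q$), subtracting $2^{\l-k}$ only affects binary positions strictly above $a_q-k$ (possibly through a borrow that introduces a block of digits between $\l-k$ and $a_p-k-1$), so the smallest digit is still $2^{a_q-k}$. Moreover this digit still lies in the $s$-th part of the $k$-type of $f_\l(\la)$, because $f_\l$ only altered the $t$-th part and $s\neq t$; hence $R_k^{k}(f_\l(\la))=\la_s=R_k^{k}(\la)$. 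The dual statement $R_k^{\l}(f_k(\la))=\la_t=R_k^{\l}(\la)$ is easier, as subtracting $2^k$ from $n$ introduces no new binary digits $\ge\l$ at all. I expect this arithmetic bookkeeping (and not much else) to be the only technical point in the write-up.

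Combining these facts, both $f_kf_\l(\la)$ and $f_\l f_k(\la)$ have $k$-data obtained from that of $\la$ by simultaneously replacing $\la_s$ with $f_0(\la_s)$ and $\la_t$ with $f_{\l-k}(\la_t)$, and leaving all other entries (and the cores) unchanged. As the $k$-data uniquely determines the partition, we conclude $\la\in \T_{k,\l}(n)$.
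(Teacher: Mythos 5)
Your proof is correct and follows essentially the same route as the paper's: both arguments reduce to the quotient row of the $k$-data and use the binary-digit criterion of Lemma~\ref{lem:R-arithmetic} together with $2$-disjointness to see that the two hook removals happen at distinct, stable components. The only cosmetic difference is that the paper verifies that the doubly-replaced $k$-data still has pairwise $2$-disjoint sizes (hence belongs to an odd partition reached by both compositions), whereas you verify directly that each operator still acts at its original position after the other has been applied; the underlying borrow arithmetic is the same.
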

\begin{proof}
Let $\Qd{k}{\la} =(\lambda_1,\ldots, \lambda_{2^k})$.
Suppose that $R_k^k(\lambda)=\lambda_i\neq \lambda_j=R_k^\ell(\lambda)$, for some $i,j\in\{1,\ldots, 2^k\}$.
By Lemma~\ref{lem:R-arithmetic},
$2^{b_q}\subseteq_2 |\lambda_i|$ and
$2^{b_p} \subseteq_2 |\lambda_j|$.
Since $|f_0(\lambda_i)|=|\lambda_i|-1$ and $|f_{\ell-k}(\lambda_j)|=|\lambda_j|-2^{\ell-k}$,
we deduce that
$$\{|f_0(\lambda_i)|, |f_{\ell-k}(\lambda_j)|, |\lambda_s| \: \big| \:  s\in\{1,\ldots, 2^k\}\smallsetminus\{i,j\}\},$$ is a set consisting of pairwise $2$-disjoint natural numbers. Therefore we have that
$f_kf_\ell(\lambda)=f_\ell f_k(\lambda)$ is the partition with $k$-data obtained from the $k$-data of $\lambda$ by replacing $\lambda_i$ and $\lambda_j$
by $f_0(\lambda_i)$ and $f_{\ell-k}(\lambda_j)$,  respectively.
\end{proof}

\begin{lemma}\label{lem: k ell 0 ell 2}
Let $\lambda\vdasho n$ be such that
$R_k^{k}(\lambda)= R_k^{\ell}(\lambda)$. 
Then
$\la \in \T_{k,\l}(n)$
if and only if
$R_k^{k}(f_{\ell}(\lambda))= f_{\ell-k}(R_k^k(\la))$ and
$R_k^k(\la)\in \T_{0,\l-k}$.
\end{lemma}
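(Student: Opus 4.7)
The plan is to work entirely at the level of $k$-quotients. Let $\Qd{k}{\lambda}=(\lambda_1,\ldots,\lambda_{2^k})$ and write $\mu:=R_k^k(\lambda)=R_k^\ell(\lambda)=\lambda_s$ at the common position $s$. From the description of the action of $f_j$ on the $k$-data recalled at the start of Section~\ref{sec:prelims}, the quotient $\Qd{k}{f_\ell(\lambda)}$ (respectively $\Qd{k}{f_k(\lambda)}$) differs from $\Qd{k}{\lambda}$ only at position $s$, where $\mu$ is replaced by $f_{\ell-k}(\mu)$ (respectively $f_0(\mu)$), with all other entries of the $k$-data unchanged.

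Applying $f_k$ to $f_\ell(\lambda)$ then modifies $\Qd{k}{f_\ell(\lambda)}$ at the unique position $R_k^k(f_\ell(\lambda))$; symmetrically, $f_\ell$ acts on $\Qd{k}{f_k(\lambda)}$ at position $R_k^\ell(f_k(\lambda))$. Comparing the two resulting $k$-quotients entry by entry, and using that $|f_0(\mu)|\ne|f_{\ell-k}(\mu)|$ since $\ell-k\ge 1$, I would argue that any mismatch in the location of the second operation forces different sizes at position~$s$, preventing equality. Thus, for $\lambda\in\T_{k,\ell}(n)$, both second operations must land at position~$s$; under this, the two quotients agree away from $s$, and equality at position $s$ reduces to $f_0f_{\ell-k}(\mu)=f_{\ell-k}f_0(\mu)$, that is, to $\mu\in\T_{0,\ell-k}$. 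This gives the necessity of the two stated conditions.

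For sufficiency, assuming $R_k^k(f_\ell(\lambda))=f_{\ell-k}(\mu)$ and $\mu\in\T_{0,\ell-k}$, I would deduce the symmetric statement $R_k^\ell(f_k(\lambda))=f_0(\mu)$, so that the argument above goes through. Both conditions are, via Lemma~\ref{lem:R-arithmetic}, statements about which distinguished binary digit of $\lfloor(n-2^\ell)/2^k\rfloor$, respectively $\lfloor(n-2^k)/2^k\rfloor$, appears in the size of the $s$-th entry of the corresponding $k$-quotient. Using the notation of~\ref{not:44}, a case analysis according to whether $a_p=\ell$ or $a_p>\ell$, and whether $p<q$ or $p=q$, shows that outside the single exceptional case $p=q$ and $a_p=\ell$, both position-$s$ conditions hold automatically; in the exceptional case, both are equivalent to $2^{b_{q-1}}\subseteq_2 n_s$.

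The main obstacle is precisely this final equivalence, which requires careful binary-arithmetic bookkeeping of the borrows produced by subtracting $2^{\ell-k}$ (for $R_k^k$ after $f_\ell$) and $1$ (for $R_k^\ell$ after $f_k$) from $\lfloor n/2^k\rfloor$: one must identify the smallest relevant digit in each resulting number and then check it against the digits of $n_s-2^{\ell-k}$ and $n_s-1$. The payoff is that the same borrow structure produces, in the delicate subcase $p=q$ with $a_p=\ell$, the identical constraint $2^{b_{q-1}}\subseteq_2 n_s$ from both computations, so the single condition stated in the lemma captures both.
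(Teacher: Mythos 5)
Your necessity argument is sound and is essentially the paper's own: since $\ell>k$ the amounts $1$ and $2^{\ell-k}$ removed from $|\mu|$ differ, so comparing the entry at position $s$ of the two resulting $k$-quotients forces both second removals to act at position $s$, after which equality reduces to $f_0f_{\ell-k}(\mu)=f_{\ell-k}f_0(\mu)$, i.e.\ $\mu\in\T_{0,\ell-k}$.

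The gap is in the sufficiency direction. You reduce it to the symmetric statement $R_k^{\ell}(f_k(\lambda))=f_0(\mu)$ and propose to establish this by a binary-arithmetic case analysis tracking the borrows produced when subtracting $2^{\ell-k}$ and $1$ from $\lfloor n/2^k\rfloor$ --- but you explicitly leave this as ``the main obstacle'' and only assert the outcome of the case analysis, so the converse is not actually proved. The paper sidesteps this computation entirely via the uniqueness of the odd hook: let $\rho$ be the partition whose $k$-data is that of $\lambda$ with $\lambda_s$ replaced by $f_0f_{\ell-k}(\mu)$. The hypothesis $R_k^{k}(f_\ell(\lambda))=f_{\ell-k}(\mu)$ gives $\rho=f_kf_\ell(\lambda)$, so $\rho$ is odd; the hypothesis $\mu\in\T_{0,\ell-k}$ lets one rewrite the $k$-data of $\rho$ as that of $f_k(\lambda)$ with $f_0(\mu)$ replaced by $f_{\ell-k}(f_0(\mu))$, so $\rho$ is obtained from $f_k(\lambda)$ by removing a single $2^\ell$-hook (Remark~\ref{compatible-hooks}); being odd, it must equal $f_\ell f_k(\lambda)$ by the uniqueness in Theorem~\ref{theo: INOT}. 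Your symmetric position statement then falls out a posteriori rather than having to be verified digit by digit. If you insist on the arithmetic route, be aware that the case analysis you sketch is essentially the content of Lemma~\ref{lem: reducedcount}, which the paper \emph{derives from} this lemma, and that the borrow bookkeeping (e.g.\ in the delicate case $p=q$, $a_p=\ell$, where the subtraction of $2^{\ell-k}$ annihilates the lowest digit $2^{b_q}$ and promotes $2^{b_{q-1}}$ to the decisive digit) must actually be written out; as submitted, the crucial equivalence is asserted, not proved.
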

\begin{proof}
Let $\Qd{k}{\la} =(\lambda_1, \lambda_2,\ldots, \lambda_{2^k})$ and $R_k^k(\la)=R_k^\l(\la)=\lambda_i$, for some $i\in\{1,\ldots, 2^k\}$.

Suppose first
that $\la \in \T_{k,\l}(n)$.
Comparing row $k$ of the $k$-data of $f_k(\lambda)$ with row $k$ of the $k$-data of $f_\ell(\lambda)$ we deduce that the two stated conditions must hold.

Conversely,
let $\rho$ be the partition of $n-2^k-2^{\ell}$ whose $k$-data is obtained from the $k$-data of $\lambda$ by replacing $\la_i$ with $f_0f_{\ell-k}(\la_i)$.
Since $R_k^{k}(f_{\ell}(\lambda))= f_{\ell-k}(R_k^k(\la))$,  we deduce that $\rho=f_kf_\ell(\lambda)$.
As $\la_i=R_k^k(\la)\in \T_{0,\l-k}$, the $k$-data of $\rho$ equals the $k$-data of $f_\ell f_k(\lambda)$. Hence we deduce $\rho=f_\ell f_k(\lambda)$, as required.
\end{proof}

With our arithmetical understanding of the
removal process (Lemma~\ref{lem:R-arithmetic}),
the condition
$R_k^{k}(f_{\ell}(\lambda))= f_{\ell-k}(R_k^k(\la))$
is not difficult to study; it is equivalent to say that $|R_k^k(\la)|>2^{\l-k}$ and $|f_0f_{\ell-k}(R_k^k(\la))|$ is $2$-disjoint from the sizes of the other $2^k-1$ partitions in $\Qd{k}{\la} $.

\smallskip

Lemmas~\ref{lem: k ell 0 ell} and~\ref{lem: k ell 0 ell 2} provide a reduction to the case where $k=0$. We will first discuss the reduction to this situation in detail, and then investigate the situation when $f_0f_\ell(\lambda)=f_\ell f_0(\lambda)$, for any given $\lambda\vdasho n=2^t+m$ where $1\leq m$ and $\ell<t$ (and $n>6$).

\begin{definition}
Let $n,k,\l \in \N$, $k<\l$, and $2^k+2^\l\le n$.
For a given $k$-type $\tau$ of size $\lfloor n/2^k \rfloor$, we
let $\T_{k,\l}(n;\tau)$,  $\F_{k,\l}(n;\tau)$
denote the subsets of $\T_{k,\l}(n)$, $\F_{k,\l}(n)$,
respectively,
of partitions of $k$-type $\tau$.
By $T_{k,\l}(n;\tau)$,  $F_{k,\l}(n;\tau)$
we denote the cardinalities of the corresponding sets.
\end{definition}

Using Lemmas~\ref{lem:R-arithmetic},~\ref{lem: k ell 0 ell}
and~\ref{lem: k ell 0 ell 2},
we easily deduce the following description
that reduces the problem for parameters $(k,\l)$
to that for $(0,\l-k)$:

\begin{lemma}\label{lem: reducedcount}
We let $\tau=(n_1,\ldots,n_{2^k})$ be a $k$-type, of size $\lfloor n/2^k\rfloor$.
\begin{enumerate}
\item
If $2^{b_p},2^{b_q}$ are binary digits of two different parts of $\tau$,
then $\T_{k,\l}(n,\tau)=\cO(n,\tau)$.
\item
Assume that $2^{b_p},2^{b_q}$ are binary digits of the same part of $\tau$, say $n_j$.\\
If $b_p=b_q>\l-k$, or if $b_p>b_q$, then
$$
\T_{k,\l}(n;\tau)=
\{\la \in \cO(n;\tau) \mid
Q_{2^k}(\la)_j \in \T_{0,\l-k}(n_j)
\}.
$$
\item
Assume $b_p=b_q=\l-k$. \\
If $2^{b_{p-1}},2^{b_p}$ are binary digits of the same part of $\tau$, say $n_j$,
then
$$
\T_{k,\l}(n;\tau)=
\{\la \in \cO(n;\tau) \mid
Q_{2^k}(\la)_j \in \T_{0,\l-k}(n_j)
\},
$$
otherwise
$\T_{k,\l}(n,\tau)=\emptyset$.
\end{enumerate}
\end{lemma}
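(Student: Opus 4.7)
The plan is to combine Lemmas~\ref{lem: k ell 0 ell} and~\ref{lem: k ell 0 ell 2} with the arithmetic description of $R_k^j$ given by Lemma~\ref{lem:R-arithmetic}, the technical heart being to track how the smallest binary digit of $\lfloor n/2^k\rfloor$ evolves when a $2^\l$-hook is removed from $\la$. First I would use Lemma~\ref{lem:R-arithmetic} to locate, for any $\la\in\cO(n;\tau)$, the distinguished components $R_k^k(\la)$ and $R_k^\l(\la)$ of $\Qd{k}{\la}$: these sit in the parts whose sizes contain the binary digits $2^{b_q}$ and $2^{b_p}$ respectively. Hence $R_k^k(\la)\ne R_k^\l(\la)$ precisely when $2^{b_q}$ and $2^{b_p}$ lie in different parts of $\tau$. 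In that situation, which is case (1), Lemma~\ref{lem: k ell 0 ell} immediately gives commutativity for every $\la\in\cO(n;\tau)$, proving $\T_{k,\l}(n;\tau)=\cO(n;\tau)$.

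In the remaining cases (2) and (3) we have $R_k^k(\la)=R_k^\l(\la)=\la_j$ for every $\la\in\cO(n;\tau)$, where $j$ is the common position of $2^{b_p}$ and $2^{b_q}$ in $\tau$. Lemma~\ref{lem: k ell 0 ell 2} then reduces the problem to the conjunction of $R_k^k(f_\l(\la))=f_{\l-k}(R_k^k(\la))$ with $\la_j\in\T_{0,\l-k}(n_j)$. The second condition is exactly the one appearing in the statement, so the remaining content is to decide, in each sub-case, whether the first condition is automatic (so that the stated description holds) or is always violated (so that $\T_{k,\l}(n;\tau)=\emptyset$). For this I would apply Lemma~\ref{lem:R-arithmetic} to $f_\l(\la)$, noting that its $k$-data differs from that of $\la$ only in position $j$, where $\la_j$ is replaced by $f_{\l-k}(\la_j)$ of size $n_j-2^{\l-k}$, and using the identity $\lfloor(n-2^\l)/2^k\rfloor=\bigl(\sum_{i=1}^q 2^{b_i}\bigr)-2^{\l-k}$.

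The decision then amounts to locating the smallest binary digit of $\bigl(\sum_{i=1}^q 2^{b_i}\bigr)-2^{\l-k}$ and checking whether it still sits in position $j$ of the modified $k$-type. When $b_p>b_q$, the digit $2^{\l-k}=2^{b_p}$ is already present in $\sum 2^{b_i}$; its removal leaves $2^{b_q}$ as the smallest surviving digit, still in position $j$, so the condition is automatic. When $b_p=b_q>\l-k$ (which forces $p=q$), the subtraction requires borrowing from $2^{b_p}$, producing $\sum_{i<p}2^{b_i}+2^{\l-k}+2^{\l-k+1}+\dots+2^{b_p-1}$, whose smallest binary digit $2^{\l-k}$ again sits in position $j$; the condition is once more automatic. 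Finally, when $b_p=b_q=\l-k$ (and $p\ge 2$, which is forced by $2^k+2^\l\le n$), the sum collapses to $\sum_{i<p}2^{b_i}$ with smallest digit $2^{b_{p-1}}$, and this sits in position $j$ precisely when $2^{b_{p-1}}$ was already a binary digit of $n_j$, i.e., in the same part of $\tau$ as $2^{b_p}$. The main technical obstacle is exactly this arithmetic bookkeeping, above all the borrowing step in the middle sub-case and the verification that $p\ge 2$ in the last; once these are settled, assembling the three scenarios with Lemma~\ref{lem: k ell 0 ell 2} yields the three assertions of the lemma.
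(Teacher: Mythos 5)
Your route is exactly the one the paper intends: the lemma is stated there without a written proof, as an ``easy'' consequence of Lemmas~\ref{lem: k ell 0 ell}, \ref{lem: k ell 0 ell 2} and~\ref{lem:R-arithmetic}, and your proposal supplies precisely the missing bookkeeping --- locating $R_k^k$ and $R_k^\l$ via the digits $2^{b_q},2^{b_p}$, invoking Lemma~\ref{lem: k ell 0 ell} for case (1), and for cases (2) and (3) testing when $R_k^k(f_\l(\la))=f_{\l-k}(R_k^k(\la))$ by tracking the smallest binary digit of $\lfloor (n-2^\l)/2^k\rfloor=\sum_{i=1}^q 2^{b_i}-2^{\l-k}$. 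All three conclusions you reach are correct, as is the observation that $p\ge 2$ is forced in case~(3).

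One intermediate claim is wrong, however. In the sub-case $b_p>b_q$ you assert that ``the digit $2^{\l-k}=2^{b_p}$ is already present in $\sum_i 2^{b_i}$''. This holds only when $a_p=\l$; if $a_p>\l$ then $b_i\ge b_p>\l-k$ for all $i\le p$ while $b_i<\l-k$ for $i>p$, so $2^{\l-k}$ is \emph{not} a digit of $\sum_i 2^{b_i}$ and the subtraction does require borrowing, producing the new digits $2^{\l-k},2^{\l-k+1},\dots,2^{b_p-1}$. The conclusion nevertheless survives: since $q>p$ forces $b_q<\l-k$, all the borrowed digits exceed $2^{b_q}$, the digits below $2^{\l-k}$ are untouched both in $\sum_i 2^{b_i}$ and in $n_j$, and hence $2^{b_q}$ remains the smallest binary digit of $\lfloor (n-2^\l)/2^k\rfloor$ and remains a digit of $n_j-2^{\l-k}$. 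With that correction (i.e., splitting on whether $b_p=\l-k$ or $b_p>\l-k$ before deciding whether borrowing occurs, rather than on whether $b_p>b_q$), your argument is complete and matches the intended proof.
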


Together with \cite[Theorem B]{BGO} this result tells us when for all odd partition of a fixed $k$-type commutativity holds.

\begin{corollary}
We let $\tau=(n_1,\ldots,n_{2^k})$ be a $k$-type, of size $\lfloor n/2^k\rfloor$.
Then $\T_{k;\l}(n,\tau)=\cO(n;\tau)$ if and only if
one of the following holds:
\begin{enumerate}
\item
$2^{b_p},2^{b_q}$ are binary digits of two different parts of $\tau$.
\item
$2^{b_p},2^{b_q}$ are binary digits of the same part $n_j>2^{\l -k}$ of $\tau$,
and
$2^{\l -k}=2^{b_p}$ is the largest binary digit of $n_j$,
or $n_j=2^{b_p}$, 
or $n_j=6$ and $(b_p,b_q,\l-k)\in \{(2,2,1),(2,1,1),(1,1,1)\}$.
\end{enumerate}
\end{corollary}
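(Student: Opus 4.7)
The strategy is to combine Lemma~\ref{lem: reducedcount} with Theorem~B of~\cite{BGO}, the latter characterizing when $\T_{0,\l'}(N)=\cO(N)$ for any triple $(0,\l',N)$. I would split the analysis along the three cases of Lemma~\ref{lem: reducedcount}. Case~(1) of the corollary falls out immediately from case~(1) of the lemma. For the corollary's case~(2), cases~(2) and~(3) of the lemma reduce the question to whether $\T_{0,\l-k}(n_j)=\cO(n_j)$, where $n_j$ is the common part of $\tau$ containing both $2^{b_p}$ and $2^{b_q}$.

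Next I would invoke \cite[Theorem~B]{BGO} with parameters $(0,\l-k)$ applied to $n_j=2^T+M$ (with $0\le M<2^T$): this equality holds precisely when (a) $M=0$, so $n_j$ is a power of~$2$; (b) $\l-k=T$, so $2^{\l-k}$ is the largest binary digit of $n_j$; or (c) $n_j=6$ and $\l-k=1$. Sub-case (a) forces $n_j=2^{b_p}$ with $b_p=b_q$; sub-case (b) forces $b_p=\l-k$, together with $b_p>b_q$ so that the prerequisite $n_j>2^{\l-k}$ of the corollary is met; sub-case (c) restricts $(b_p,b_q)$ to the finitely many possibilities coming from the requirement that $2^{b_p}$ and $2^{b_q}$ be among the digits $\{2,4\}$ of $6$. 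Reassembling these gives exactly the three alternatives listed in the corollary's case~(2).

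The delicate point is the treatment of Lemma~\ref{lem: reducedcount}(3), i.e., $b_p=b_q=\l-k$. There the lemma additionally requires $2^{b_{p-1}}\in n_j$; if this fails, $\T_{k,\l}(n;\tau)=\emptyset$ and equality with $\cO(n;\tau)$ cannot hold. When the side condition does hold, $n_j$ contains a digit $2^{b_{p-1}}$ strictly larger than $2^{\l-k}$, which rules out sub-cases (a) and (b) of Theorem~B (neither the power-of-$2$ condition nor the largest-digit condition can survive the presence of $2^{b_{p-1}}$). Only sub-case (c) remains, forcing $n_j=6$, $b_{p-1}=2$, and $(b_p,b_q,\l-k)=(1,1,1)$---the final entry in the corollary's list. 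I would also verify that the prerequisite $n_j>2^{\l-k}$ is automatic whenever any of the three listed sub-conditions of the corollary is satisfied, and that it precisely excludes the degenerate instance $n_j=2^{\l-k}$ of Lemma~\ref{lem: reducedcount}(3) in which $b_{p-1}$ is absent.

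The converse direction follows by retracing the same reductions. The main obstacle I expect is the careful bookkeeping required to match Theorem~B's three cases to the phrasing in the corollary and to handle the boundary between parts~(2) and~(3) of Lemma~\ref{lem: reducedcount}; in particular one must check that each listed sub-condition really does force $n_j>2^{\l-k}$, and that no spurious triples sneak into the $n_j=6$ alternative. Once this bookkeeping is done, the argument is essentially a translation along the equivalences already established in the preceding sections.
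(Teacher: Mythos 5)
Your proposal is correct and follows exactly the paper's route: the paper's own proof simply cites \cite[Theorem B]{BGO} (applied with parameters $(0,\l-k)$ to the part $n_j$) together with Lemma~\ref{lem: reducedcount}, leaving all the case bookkeeping implicit. Your write-up just makes that bookkeeping explicit — including the correct handling of the $b_p=b_q=\l-k$ boundary case and the observation that $n_j=6$ with $(b_p,b_q)=(1,1)$ automatically forces $2^{b_{p-1}}=4$ to be a digit of $n_j$ — so there is nothing to add.
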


\begin{proof}
By \cite[Theorem B]{BGO} we know that
$\T_{0,\l-k}(n_j)=\cO(n_j)$
if and only if $2^{\l -k}$ is the largest binary digit
of $n_j$, or $n_j$ is a 2-power, or $n_j=6$ and $\l -k=1$.
The description of $\T_{k,\l}(n;\tau)$
in Lemma~\ref{lem: reducedcount}
then implies the stated assertion.
\end{proof}

From Lemmas~\ref{lem: k ell 0 ell} and~\ref{lem: k ell 0 ell 2} we also deduce that for $\la\in \cO(n)$,
the commutativity property does not depend on the rows of the $k$-data corresponding to the cores.
Hence we have:

\begin{lemma}\label{lem: core-part}
We let $\bar n= \sum_{i=1}^q 2^{a_i}$.
For any $k$-type $\tau$ of size $\lfloor n/2^k\rfloor$ we have
$$
T_{k,\l}(n;\tau)=(\prod_{j=q+1}^r 2^{a_j})\cdot T_{k,\l}(\bar n;\tau)
$$
In particular,
$$
T_{k,\l}(n)=(\prod_{j=q+1}^r 2^{a_j})\cdot T_{k,\l}(\bar n)\:.
$$
Analogous statements hold for $F_{k,\l}(n;\tau), F_{k,\l}(n)$.
\end{lemma}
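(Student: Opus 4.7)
The plan is to decouple the lower rows of the $k$-data (the 2-cores at depths $<k$) from the bottom row $\Qd{k}{\la}$, and to observe that the commutativity condition is determined entirely by the latter. Indeed, by Lemmas~\ref{lem: k ell 0 ell} and~\ref{lem: k ell 0 ell 2}, whether $f_kf_\l(\la)=f_\l f_k(\la)$ holds depends only on the components $R_k^k(\la)$ and $R_k^\l(\la)$ of $\Qd{k}{\la}$ together with the tuple of sizes $(|\la_1^{(k)}|,\ldots,|\la_{2^k}^{(k)}|)$; the 2-cores in rows $0,\ldots,k-1$ of the $k$-data play no role.

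First I would quantify the freedom in those lower rows. By Theorem~\ref{thm:oddcriterion}, any $\la\vdasho n$ satisfies $c^{(j)}_2(\la)\le 1$ for $j<k$; since the binary digits of $n$ strictly less than $2^k$ are exactly $2^{a_{q+1}},\ldots,2^{a_r}$ and these must be accounted for by the lower rows of the $k$-data, the values $c^{(j)}_2(\la)$ are forced to equal $1$ precisely when $j\in\{a_{q+1},\ldots,a_r\}$, and $0$ otherwise. In each such row, exactly one of the $2^j$ positions holds the 2-core $(1)$ while the others are empty, yielding $2^j$ choices independent of $\Qd{k}{\la}$. The total number of lower-core configurations compatible with any fixed $\Qd{k}{\la}$ is therefore $\prod_{j=q+1}^r 2^{a_j}$.

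Second, since $\lfloor n/2^k\rfloor=\bar n/2^k$, the constraints of Theorem~\ref{thm:oddcriterion} on $\Qd{k}{\la}$ for $\la\in\cO(n;\tau)$ and on $\Qd{k}{\bar\la}$ for $\bar\la\in\cO(\bar n;\tau)$ are identical. Since the $k$-data determines the partition uniquely, pairing a tuple in $\Qd{k}{\cdot}$ with a choice of lower-core positions yields a bijection
\[
\cO(n;\tau)\;\longleftrightarrow\;\cO(\bar n;\tau)\times \prod_{j=q+1}^r \{1,\ldots, 2^{a_j}\},
\]
sending $\la$ to the pair consisting of the unique $\bar\la\in\cO(\bar n;\tau)$ with $\Qd{k}{\bar\la}=\Qd{k}{\la}$ and the positions of the $(1)$-cores of $\la$ in rows $a_{q+1},\ldots,a_r$ of its 2-core tower. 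By the first paragraph, this bijection restricts to one between $\T_{k,\l}(n;\tau)$ and $\T_{k,\l}(\bar n;\tau)\times \prod_{j=q+1}^r \{1,\ldots,2^{a_j}\}$, which gives the formula for $T_{k,\l}(n;\tau)$; summing over the common set of admissible $k$-types yields the formula for $T_{k,\l}(n)$. Replacing $\T$ by $\F$ throughout gives the analogous statements.

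The main obstacle is checking that the bijection is well-defined, i.e., that any combination of a $\Qd{k}$-tuple satisfying the three conditions of Theorem~\ref{thm:oddcriterion} together with arbitrary independent choices of lower $(1)$-core positions does reassemble into a genuine odd partition of $n$. This reduces to the fact that Theorem~\ref{thm:oddcriterion} imposes no hidden cross-row compatibility beyond the size accounting already used, which follows from the inductive description of the $k$-data via repeated 2-quotient/2-core decompositions.
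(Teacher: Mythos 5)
Your proof is correct and follows essentially the same route as the paper, which justifies this lemma by observing (after Lemmas~\ref{lem: k ell 0 ell} and~\ref{lem: k ell 0 ell 2}) that the commutativity property depends only on the bottom row $\Qd{k}{\la}$ of the $k$-data, while the core rows contribute an independent factor of $\prod_{j=q+1}^r 2^{a_j}$; the paper leaves the details implicit and you have supplied them accurately, including the well-definedness of the reassembly via the core-tower description.
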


Similarly as we have done it in
Section~\ref{sec: kk,k+1},
we introduce suitable weights that can be computed
from the numbers for the $(0,\l-k)$-case.

\begin{definition}
For $J\subseteq I=\{1,\ldots,q\}$
we define its $F$-weight with respect to the set
$B=\{b_1,\ldots,b_q\}$ to be
$$
w_F(J) =
(\prod_{i\in I\setminus J}2^{b_i})
\cdot
(2^k-1)^{|I\setminus J|}
\cdot
F_{0,\l -k}(\sum_{j\in J}2^{b_j}).
$$
\end{definition}

Using in part similar arguments as before,
we have the following formulae for the numbers $F_{k,\l}(n)$, based on the values $F_{0,\l-k}(u)$
(and hence also such formulae for $T_{k,\l}(n)$).

\begin{teo}\label{theo: B}
With the notation introduced in~\ref{not:44},
the following holds.

If $a_p>a_q$ or if $a_p=a_q>\l$, then
$$
F_{k,\l}(n)=
(\prod_{j=q+1}^r 2^{a_j})\cdot
2^k \cdot
\sum_{\{p,q\}\subseteq J \subseteq I} w_F(J) \:.
$$

If $a_p=a_q=\l$, then
$$
F_{k,\l}(n)=
(\prod_{j=q+1}^r 2^{a_j})\cdot
\biggl(
2^k \cdot
\bigl(
\sum_{\{p-1,p\}\subseteq J \subseteq I} w_F(J)
\bigr)
+2^{k(q-1)} (2^k-1) \prod_{i\in I}2^{b_i}
\biggr)
\:.
$$
\end{teo}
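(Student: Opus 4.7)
The plan is to follow the same strategy as in Section~\ref{sec: kk,k+1}: apply Lemma~\ref{lem: core-part} to reduce the computation to $F_{k,\l}(\bar n)$ with $\bar n = \sum_{i=1}^q 2^{a_i}$, then enumerate the odd partitions of $\bar n$ by $k$-type and apply Lemma~\ref{lem: reducedcount} termwise.

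A $k$-type $\tau=(n_1,\ldots,n_{2^k})$ of $\bar n$ is encoded by a function $\phi \colon I \to \{1,\ldots,2^k\}$ via $n_j = \sum_{i \colon \phi(i)=j} 2^{b_i}$; by the main result of~\cite{B} together with Lemma~\ref{lem:odd-value}, the number of odd partitions of any given type $\tau$ equals $\prod_{i \in I} 2^{b_i}$, independently of~$\phi$.

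In Case~1 (i.e.\ $b_p > b_q$, or $b_p = b_q > \l - k$), Lemma~\ref{lem: reducedcount}(1) gives no contribution when $\phi(p) \ne \phi(q)$, while Lemma~\ref{lem: reducedcount}(2) describes the failing partitions when $\phi(p) = \phi(q)$. Organizing the sum by the fibre $J := \phi^{-1}(\phi(p)) \supseteq \{p,q\}$, one obtains $2^k$ choices for the common value $\phi(p)$, $(2^k-1)^{|I \setminus J|}$ choices for $\phi|_{I \setminus J}$ (to force the fibre to be exactly $J$), and $F_{0,\l-k}(\sum_{i\in J}2^{b_i}) \cdot \prod_{i \in I \setminus J} 2^{b_i}$ failing partitions per such type. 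Summing yields $F_{k,\l}(\bar n) = 2^k \sum_{\{p,q\} \subseteq J \subseteq I} w_F(J)$, and the core factor from Lemma~\ref{lem: core-part} produces the first formula.

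In Case~2 (i.e.\ $a_p = a_q = \l$, so $p = q$ and $b_p = \l - k$; since $\l < t = a_1$ under the standing assumption, $p \ge 2$), split according to whether $\phi(p-1) = \phi(p)$ or not. The first sub-case is handled exactly as in Case~1 but with $\{p-1,p\}$ playing the role of $\{p,q\}$, contributing $2^k \sum_{\{p-1,p\} \subseteq J \subseteq I} w_F(J)$. In the second sub-case, Lemma~\ref{lem: reducedcount}(3) forces $\T_{k,\l}(\bar n;\tau) = \emptyset$, so \emph{every} odd partition of type $\tau$ belongs to $\F_{k,\l}(\bar n;\tau)$; the number of assignments $\phi$ with $\phi(p-1) \ne \phi(p)$ is $(2^k-1) \cdot 2^{k(q-1)}$, and each such type contributes $\prod_{i\in I} 2^{b_i}$ partitions. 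Adding the two sub-case contributions and multiplying by the core factor yields the second formula. The only delicate point will be the bookkeeping in Case~2, where the partial-failure contribution (indexed by subsets $J \supseteq \{p-1,p\}$) must be correctly combined with the total-failure contribution from types having $2^{b_{p-1}}$ and $2^{b_p}$ in different parts; the remaining multinomial counting mirrors that already carried out in the proof of Theorem~\ref{theo: G}.
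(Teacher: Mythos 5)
Your proposal is correct and follows essentially the same route as the paper's own proof: reduction via Lemma~\ref{lem: core-part}, enumeration of $k$-types with Lemma~\ref{lem: reducedcount} applied termwise, and the same multinomial bookkeeping organized by the subset $J$ (your encoding of types by the assignment $\phi$ is just a notational repackaging of the paper's placement argument). The counts in both cases, including the $(2^k-1)2^{k(q-1)}\prod_{i\in I}2^{b_i}$ total-failure term when $a_p=a_q=\l$, match the paper's computation exactly.
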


\begin{proof}
In view of Lemma~\ref{lem: core-part}, we focus on
computing the numbers $F_{k,\l}(\bar n)$.
We take the sum over all $k$-types of size $\lfloor n/2^k\rfloor$,
and for each $k$-type $\tau$
we count the partitions in $\F_{k,\l}(\bar n;\tau)$
according to Lemma~\ref{lem: reducedcount}.

When $a_p>a_q$, we only get a contribution from
those $k$-types $\tau$  where $2^{b_p},2^{b_q}$
are binary digits in the same part of $\tau$.
To construct these,  we take any $J\subseteq I$
containing $p,q$ and define $n_j:=\sum_{j\in J}2^{b_j}$.
We now consider the contributions from all $k$-types for this fixed~$J$.
We first have $2^k$ choices
for the position $j$ of the part
of which $2^{b_p},2^{b_q}$ are
binary digits.
For each $i\in I\setminus J$
we have $2^k-1$ choices to place $2^{b_i}$
as a binary digit of a part of the $\tau$ under construction, different from the $j$-th.
By Lemma~\ref{lem: reducedcount}, for such a $\tau=(n_1,\ldots,n_{2^k})$,
the partitions $\la\in \F_{k,\l}(\bar n;\tau)$
are then those
such that $\cQ(\la)_i$ is an arbitrary
odd partition of $n_i$ for $i\ne j$, and $\cQ(\la)_j\in \F_{0,\l-k}(n_j)$ (and the earlier rows of the $k$-data contain only empty cores). Hence we get
$|\F_{k,\l}(\bar n;\tau)|=(\prod_{i\in I\setminus J}2^{b_i})
\cdot
F_{0,\l -k}(\sum_{j\in J}2^{b_j})$, and the total contribution for $J$ is exactly $2^k w_F(J)$.

When $a_p=a_q$, the construction of the relevant $k$-types
$\tau$ is verbatim the same as before, noticing that now $\{p,q\}=\{p\}$.

In the situation where $a_p=a_q>\l$, for each $k$-type $\tau$ thus constructed the number
$|\F_{k,\l}(\bar n;\tau)|$ is determined as before,
giving again the total contribution $2^k w_F(J)$ for
each $J\subseteq I$ with $p\in J$.

In the remaining situation, where $a_p=a_q=\l$, we have to distinguish between the $k$-types $\tau$ where $2^{b_p}$ and $2^{b_{p-1}}$ are binary digits of the same part
of $\tau$, and those where this does not hold.
In the first case, take any $J\subseteq I$
containing $\{p-1,p\}$,
define $n_j:=\sum_{j\in J}2^{b_j}$
and argue as before to construct the $k$-types to~$J$.
Applying Lemma~\ref{lem: reducedcount},
the total contribution for $J$ is again
exactly $2^k w_F(J)$.
To construct the $k$-types in the second case,
note that having placed $2^{b_p}$ (in $2^k$ ways)
there are only $2^k-1$ choices to place
$2^{b_{p-1}}$ as a binary digit,
but for each of the other $q-2$
binary digits $2^{b_i}$ there are $2^k$ choices.
For any $\tau$ constructed in this way,
we know by Lemma~\ref{lem: reducedcount} that
$\F_{k,\l}(\bar n;\tau)=\cO(\bar n;\tau)$,
hence $|\F_{k,\l}(\bar n;\tau)|=\prod_{i\in I}2^{b_i}$.
Thus the total contribution over
all $k$-types of the second
form is exactly $2^{k(q-1)} (2^k-1) \prod_{i\in I}2^{b_i}$.
This proves the stated formula.
\end{proof}

\begin{examples}
We illustrate the efficiency of the formula above by giving some examples. Here we assume that we already know the values $F_{0,\l-k}(u)$; these -- or their counterparts $T_{0,\l-k}(u)$, respectively -- are computed explicitly in the next section.

(1)
Let $n=24=2^4+2^3$, $k=2$, $\l=3$.
Then $F_{2,3}(24)=2^2\cdot 2^3\cdot 3 + F_{0,1}(2^2+2^1)$.
As $F_{0,1}(6)=0$,
$F_{2,3}(24)=96$.

(2)
Let $n=36=2^5+2^2$, $k=2$, $\l=3$.
Then $F_{2,3}(36)=2^2 F_{0,1}(2^3+2^0)$.
As $F_{0,1}(9)=6$, $F_{2,3}(36)=24$.
\end{examples}

\section{The base case $\T_{0,\ell -k}(x)$}\label{sec:basecases}

We now turn to the discussion of
the situation when $f_0f_\ell(\lambda)=f_\ell f_0(\lambda)$, for any given $\lambda\vdasho n=2^t+m$ where $1\leq m<2^t$ and $\ell<t$.

\begin{definition}
For $n\in \N$ we denote by $\Omega(\ell; n)=\T_{0,\l}(n)$ the set consisting of all odd partitions $\lambda$ of $n$ such that $f_0f_\ell(\lambda)=f_\ell f_0(\lambda)$; for $n\le 2^\ell$, the set $\Omega(\ell; n)$
then is supposed to be the set of all odd partitions.
When $n=2^t+m$ as above,
and $\gamma\vdasho  m$, we set
$$\Omega(\ell; n\ |\ \gamma):=\{\lambda\in\Omega(\ell; n)\ |\ C_{2^t}(\lambda)=\gamma\}.$$
\end{definition}

\begin{strategy}
For any natural number $n\geq 2^{\ell+1}$ we write $n=2^{t_1}+\cdots +2^{t_a}+m$ where $t_1>t_2>\cdots >t_a>\ell$ and $m\leq 2^{\ell}$. We will first describe the set $\Omega(\ell; n)$ in terms of $\Omega(\ell; n-2^{t_1})$.

Afterwards, we will study the base cases, namely the set $\Omega(\ell; 2^t+m)$ for any $0\leq m\leq 2^{\ell}$ and $t\geq \ell$. To do this we will first analyse the case where $m\neq 2^\ell$. Separately we will deal with the remaining case: namely $m=2^\ell$ and $t>\ell$.

This will provide us with a complete recursive description of the set $\Omega(\ell; n)$. As an application of this, we derive a closed formula for $|\Omega(\ell; n)|$.
\end{strategy}

\begin{lemma}\label{lem: core}
Let $n=2^t+m$ where $ 2^{\ell}+1\leq m< 2^t$, and let $\lambda\vdasho n$. If $\lambda\in\Omega(\ell; n)$ then $C_{2^t}(\lambda)\in\Omega(\ell; m)$.
Thus, $$\Omega(\ell; n) = \bigcup_{\gamma\in\Omega(\ell; m)}\Omega(\ell; n\ |\ \gamma).$$
\end{lemma}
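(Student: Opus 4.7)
The plan is to apply Lemma~\ref{lem: f commutes with core} twice (once for the operator $f_\ell$ and once for $f_0$) in order to push the $2^t$-core past each of the hook-removal operators, and then use the hypothesis $\lambda \in \Omega(\ell;n)$ to conclude the same commutation for $C_{2^t}(\lambda)$.

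First I would verify that the hypothesis $2^\ell + 1 \leq m < 2^t$ is strong enough for all four invocations of Lemma~\ref{lem: f commutes with core} to apply. Namely: for the identity $f_\ell(C_{2^t}(\lambda)) = C_{2^t}(f_\ell(\lambda))$ we need $2^\ell \leq m$, which holds. After applying $f_\ell$ we land on a partition of $2^t + (m - 2^\ell)$ with $m - 2^\ell \geq 1 = 2^0$, so Lemma~\ref{lem: f commutes with core} applies again to yield $f_0(C_{2^t}(f_\ell(\lambda))) = C_{2^t}(f_0 f_\ell(\lambda))$. Similarly, applying $f_0$ first gives a partition of $2^t + (m-1)$ with $m-1 \geq 2^\ell$, so both $C_{2^t}(f_0(\lambda)) = f_0(C_{2^t}(\lambda))$ and then $f_\ell(C_{2^t}(f_0(\lambda))) = C_{2^t}(f_\ell f_0(\lambda))$ are legitimate.

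Chaining these identities gives
\[
f_0 f_\ell(C_{2^t}(\lambda)) = C_{2^t}(f_0 f_\ell(\lambda)) \quad \text{and} \quad f_\ell f_0(C_{2^t}(\lambda)) = C_{2^t}(f_\ell f_0(\lambda)).
\]
Since $\lambda \in \Omega(\ell;n)$ means $f_0 f_\ell(\lambda) = f_\ell f_0(\lambda)$, applying $C_{2^t}$ to both sides equates the right-hand sides above, so $f_0 f_\ell(C_{2^t}(\lambda)) = f_\ell f_0(C_{2^t}(\lambda))$. Since $C_{2^t}(\lambda) \vdash_o m$ (as $\lambda$ has $2^t$-weight~$1$ and its unique $2^t$-hook removal preserves oddness by Theorem~\ref{theo: INOT}) and $m \geq 2^\ell + 1 > 2^\ell + 2^0$, this says precisely that $C_{2^t}(\lambda) \in \Omega(\ell; m)$.

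For the displayed union formula, the $\subseteq$ direction is immediate: every $\lambda \in \Omega(\ell;n)$ belongs to $\Omega(\ell; n \mid C_{2^t}(\lambda))$, and we just proved $C_{2^t}(\lambda) \in \Omega(\ell;m)$. The $\supseteq$ direction is tautological from the definition of $\Omega(\ell; n \mid \gamma)$. The only real content is the core-preservation argument above; there is no obstacle since the commutation of $f_\ell$ with $C_{2^t}$ was already established in Lemma~\ref{lem: f commutes with core}, and the bound $m \geq 2^\ell + 1$ is exactly what is needed to apply it after either $f_0$ or $f_\ell$ has been performed once.
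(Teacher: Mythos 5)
Your proposal is correct and is essentially the paper's own argument: the paper's proof is the single line ``this follows directly from Lemma~\ref{lem: f commutes with core}'', and you have simply spelled out the four invocations of that lemma (checking the size hypotheses each time) that make the chain of identities work. (One trivial slip: $2^\ell+1>2^\ell+2^0$ should be an equality, but the needed bound $m\ge 2^0+2^\ell$ still holds.)
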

\begin{proof}
This follows directly from Lemma~\ref{lem: f commutes with core}.
\end{proof}

\subsection{Inductive step}

In light of Lemma~\ref{lem: core} we will now fix $\gamma\in\Omega(\ell; m)$ and explicitly describe which and how many of the $2^t$ odd extensions of $\gamma$ lie in $\Omega(\ell; n\ |\ \gamma)$.
In the following we are referring to Definition~\ref{def: odd-position} and
we are using Lemma~\ref{odd-hook-notations}.

\begin{definition}\label{def:types}
Let $n$ and $\ell$ be natural numbers such that $2^\ell\leq n$.
Let $\lambda\vdasho n$.
We say that $\lambda$ is of type $(1+)$
if $z_0(\la)=z_{\ell}(\la)$, and  $\lambda$ is of type $(1-)$
if $s_0(\la)=s_{\ell}(\la)$
(i.e., the odd 1-hook and odd $2^\ell$-hook of $\la$
have the same hand or foot, respectively).
Otherwise, we say that $\lambda$ is of type $(2)$.\\
When $\lambda\in\Omega(\ell; n)$,
we write $\lambda\in\Omega^{\epsilon}(\ell; n)$ if
$\la$ is of type $\epsilon\in \{1+,1-,2\}$.
\\
Clearly,
$\Omega(\ell; n)=\Omega^{1+}(\ell; n)\cup \Omega^{1-}(\ell; n)\cup \Omega^{2}(\ell; n)$. We set $\Omega^{1}(\ell; n):=\Omega^{1+}(\ell; n)\cup \Omega^{1-}(\ell; n)$.
\end{definition}

\begin{notation}\label{def: 2 to the t abacus}
Let $n$ and  $\lambda\vdasho n$ as above.
Assume that $n< 2^s$;
let $A$ be a normalized $2^s$-abacus for $\lambda$,  i.e.,
the beads after the first gap on $A$ correspond to the first column hook lengths in $\la$.
Let $c=c_0(A)$ and $d=c_\ell(A)$.
By Lemma~\ref{odd-hook-notations}, $c$ and $d$ are in positions $(0,r_0(\lambda))$ and $(0,r_\ell(\lambda))$ of $A$, respectively.

Note that $\la$ is of type $(1+)$ if $r_0(\la)=r_\ell(\la)$,
i.e., we have the situation of the \textit{same start bead},
while $\la$ is of type $(1-)$ if
$r_\ell(\la)= r_0(\la)+2^\ell-1$, i.e., the beads have the \textit{same target position}.

\end{notation}

For an illustration of this definition as well as for providing a basic case,
we consider the situation where $n$ is a 2-power. Note that in this case
$\Omega(\ell;n) = \mathcal{H}(n)$.

\begin{lemma}\label{lem: hook types}
Let $n=2^t$ and $0<\ell\le t$.
Let $\lambda\in \Omega(\ell;n)$, say $\lambda=(2^t-w,1^w)$ with $0\le w\le 2^t-1$.

For $\ell <t$,
$\lambda$ is of type $(1+)$ if
$1,2^\ell\not\subseteq_2 w$,
it is of type $(1-)$
$1,2^\ell\subseteq_2 w$,
and it is of type $(2)$ otherwise.
In particular,
$$|\Omega^{1\pm}(\ell; 2^t)|=2^{t-2}, \;
|\Omega^2(\ell; 2^t)|=2^{t-1}.$$

For $\ell =t$,
$\lambda$ is of type $(1+)$ if $w$ is even, and
it is of type $(1-)$ if $w$ is odd.
In particular,
$$|\Omega^{1\pm}(t, 2^t)|=2^{t-1}, \;
|\Omega^2(t, 2^t)|=0.$$
\end{lemma}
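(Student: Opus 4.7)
The approach is to work entirely from the explicit formulae in Lemma~\ref{lem: hooks}, which describe $f_0(\lambda)$ and $f_k(\lambda)$ for any hook $\lambda=(2^t-w,1^w)$ purely in terms of the binary digits of~$w$. Since $\Omega(\ell;2^t)=\mathcal{H}(2^t)$ is given, the only task is to classify each hook $\lambda$ by its type according to Definition~\ref{def:types}; this reduces to identifying the coordinates $(z_0(\lambda),s_0(\lambda))$ and $(z_\ell(\lambda),s_\ell(\lambda))$ of the odd $1$-hook and the odd $2^\ell$-hook of~$\lambda$.

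Assume first that $\ell<t$. Applying Lemma~\ref{lem: hooks} with $k=0$ shows that $f_0(\lambda)$ equals $(2^t-w,1^{w-1})$ when $1\subseteq_2 w$ and $(2^t-w-1,1^w)$ otherwise; hence the odd $1$-hook of $\lambda$ sits at $(w+1,1)$ (the foot of column~$1$) when $w$ is odd and at $(1,2^t-w)$ (the hand of row~$1$) when $w$ is even. The same lemma with $k=\ell$ gives that the odd $2^\ell$-hook sits at $(w-2^\ell+2,1)$ when $2^\ell\subseteq_2 w$ and at $(1,2^t-w-2^\ell+1)$ otherwise. I would then check that since $\ell\ge 1$ neither of the auxiliary equalities $w+1=w-2^\ell+2$ nor $2^t-w=2^t-w-2^\ell+1$ can occur (both force $2^\ell=1$). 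Consequently $z_0=z_\ell$ forces both hooks to start in row~$1$, equivalent to $1,2^\ell\not\subseteq_2 w$, while $s_0=s_\ell$ forces both hooks to end in column~$1$, equivalent to $1,2^\ell\subseteq_2 w$. Everything else gives type~$(2)$.

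The case $\ell=t$ needs a separate (but trivial) argument because Lemma~\ref{lem: hooks} excludes $k=t$: here $|\lambda|=2^t=2^\ell$, so the unique $2^\ell$-hook is $H_{(1,1)}(\lambda)$ itself and $(z_\ell,s_\ell)=(1,1)$. Therefore $\lambda$ is of type~$(1+)$ precisely when $z_0=1$, i.e.\ when $w$ is even, of type~$(1-)$ precisely when $s_0=1$, i.e.\ when $w$ is odd, and never of type~$(2)$.

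For the counts, when $\ell<t$ the two binary digits of $w$ at positions $0$ and $\ell$ are independent as $w$ ranges over $\{0,1,\ldots,2^t-1\}$, so each of the four combinations occurs for exactly $2^{t-2}$ values of~$w$, giving $|\Omega^{1\pm}(\ell;2^t)|=2^{t-2}$ and $|\Omega^2(\ell;2^t)|=2^{t-1}$; when $\ell=t$ the parity of $w$ splits the $2^t$ values into two halves of size $2^{t-1}$. I do not expect any serious obstacle: the only point requiring care is the behaviour of the boundary cases $w=0$ and $w=2^t-1$, where only one of the two candidate positions for a $1$-hook or $2^\ell$-hook actually exists in $[\lambda]$, but the binary-digit formulation automatically selects the correct (and only) type in those extreme cases.
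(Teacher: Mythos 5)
Your proposal is correct and follows essentially the same route as the paper: both arguments reduce the classification to the explicit hook formulae of Lemma~\ref{lem: hooks} and then read off the type from the resulting case analysis on the binary digits $1$ and $2^\ell$ of $w$ (the paper phrases the comparison via the first-column hook lengths $r_0,r_\ell$ as in Notation~\ref{def: 2 to the t abacus}, whereas you use the hook coordinates $(z_j,s_j)$ of Definition~\ref{def:types} directly, which is only a difference of bookkeeping). Your check that the cross-coincidences force $2^\ell=1$, and your separate treatment of $\ell=t$ and of the boundary values of $w$, match what is needed and the counts follow as you say.
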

\begin{proof}
Let $r_0=r_0(\lambda)$,  $r_\ell=r_\ell(\lambda)$ be defined as above.
First we assume $\ell<t$.
By Lemma~\ref{lem: hooks} we have
$r_0=r_\ell$ if and only if $1,2^\ell\not\subseteq_2 w$,
and $r_\ell=r_0+2^\ell-1$ if and only if $1,2^\ell\subseteq_2 w$.
Similarly, in the case $\ell=t$, we have
$r_\ell=r_0$ if $w$ is even and
$r_\ell=r_0+2^\ell-1$ if $w$ is odd.
\end{proof}

\medskip

\begin{lemma}\label{lem: type1 extension and type preserved}
Let $n=2^t+m$ where $2^{\ell}+1\leq m< 2^t$, and let $\gamma\in\Omega^2(\ell; m)$. Then
$$\Omega(\ell; n\ |\ \gamma)=\mathcal{E}(\gamma, 2^t)\subseteq \Omega^2(\ell; n).$$
\end{lemma}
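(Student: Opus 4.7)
The plan is to carry out an abacus analysis. Let $B$ be the normalized $2^t$-abacus for $\gamma$. Since $|\gamma|=m<2^t$, Lemma~\ref{odd-hook-notations} places all beads of $B$ after the first gap in row $0$, with $c_0(B), c_\ell(B)$ at $(0,i), (0,j)$ respectively, where $i=r_0(\gamma)$ and $j=r_\ell(\gamma)$. The type $(2)$ hypothesis gives $i\neq j$ and $j-2^\ell\neq i-1$. Moreover, since the slides $c_0(B)\to(0,i-1)$ and $c_\ell(B)\to(0,j-2^\ell)$ must land in gaps of $B$, neither $i-1$ nor $j-2^\ell$ is a first-column hook length of $\gamma$; in particular $i-1\neq j$ and $i\neq j-2^\ell$. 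Combining, the four values $i-1, i, j-2^\ell, j$ are pairwise distinct.

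For $\lambda\in\mathcal{E}(\gamma,2^t)$, the abacus $A$ is obtained from $B$ by sliding one bead down one row in some column $x\in\{0,\ldots,2^t-1\}$. The central claim I would establish is: \emph{the bead $c_0(A)$ always lies in column $i$ and slides to column $i-1$, while $c_\ell(A)$ always lies in column $j$ and slides to column $j-2^\ell$.} More precisely, $c_0(A)=c_0(B)$ at $(0,i)$ unless $x\in\{i-1,i\}$, in which case the substitute bead $c_0(A)$ sits at $(-1,i)$ (for $x=i-1$) or $(1,i)$ (for $x=i$) and slides within its row; the analogous description holds for $c_\ell(A)$ with $x\in\{j-2^\ell, j\}$. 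The claim is verified by case analysis: in each perturbed case one identifies the substitute bead explicitly, then uses Lemma~\ref{lem: f commutes with core} to check that the slide produces an abacus with $2^t$-core $f_0(\gamma)$ or $f_\ell(\gamma)$, after which Lemma~\ref{lem:maxhook}(3) (all extensions of an odd partition by a top-binary-digit hook are odd) together with the uniqueness in Theorem~\ref{theo: INOT} forces the substitute to be $c_0(A)$ or $c_\ell(A)$. Note that the perturbation sets $\{i-1, i\}$ and $\{j-2^\ell, j\}$ are disjoint, so at most one of $c_0(A), c_\ell(A)$ is perturbed by any given $x$.

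Granting the claim, both conclusions of the lemma follow at once. Since the four columns $i-1, i, j-2^\ell, j$ are pairwise distinct, the beads $c_0(A), c_\ell(A)$ sit in different columns of $A$, hence are distinct; and the target positions of their slides also lie in different columns. This gives $\lambda\in\Omega^2(\ell;n)$. Furthermore, the slides act in the disjoint column-pairs $\{i, i-1\}$ and $\{j, j-2^\ell\}$, so they are independent and commute on $A$: performing them in either order yields the same final abacus. Hence $f_0f_\ell(\lambda)=f_\ell f_0(\lambda)$, i.e.\ $\lambda\in\Omega(\ell;n\mid\gamma)$, which together with the trivial inclusion $\Omega(\ell;n\mid\gamma)\subseteq\mathcal{E}(\gamma,2^t)$ gives the desired equality.

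The main obstacle is the explicit identification of $c_0(A)$ and $c_\ell(A)$ in the four perturbed cases, where the natural bead slide of $c_k(B)$ is blocked either because the bead itself has been shifted down into row~$1$, or because its target gap has been filled by a bead sliding down from row~$-1$. The type $(2)$ hypothesis is precisely what ensures that these two kinds of perturbations never interfere with each other, so the substitute slides always remain independent.
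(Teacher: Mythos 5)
Your proposal is correct and follows essentially the same route as the paper's proof: a normalized $2^t$-abacus for $\gamma$, identification of $c_0(A)$ and $c_\ell(A)$ via a case analysis on the down-slid runner $x$, and the conclusion that the two slides act on disjoint column pairs and hence commute, with type $(2)$ preserved. Your explicit verification that $i-1,i,j-2^\ell,j$ are pairwise distinct (via the bead/gap dichotomy) is a nice touch the paper leaves implicit, and the remaining small gap---checking that the double-slide configuration is itself odd, so that uniqueness applies in both orders---is glossed over at the same level in the paper's own argument.
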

\begin{proof}
Let $\lambda\in\mathcal{E}(\gamma, 2^t)$. Let $B$ be a normalized $2^t$-abacus for $\gamma$ and let $A$ be the $2^t$-abacus for $\lambda$ such that $A^\uparrow=B$. Let $c:=c_0(B)$ and $d:=c_\ell(B)$. Thus $B(1, \leftarrow c)$ is a $2^t$-abacus for $f_0(\gamma)$ and $B(2^\ell, \leftarrow d)$ is a $2^t$-abacus for $f_\ell(\gamma)$. Following Notation~\ref{def: 2 to the t abacus} we have that $c$ is in position $(0,r_0(\gamma))$ and $d$ is in position $(0,r_\ell(\gamma))$.
Since $\gamma$ is of type $(2)$ we have that $r_\ell(\gamma)\notin\{r_0(\gamma), r_0(\gamma)+2^\ell-1\}$.
Moreover, since $f_0f_\ell(\gamma)=f_\ell f_0(\gamma)$ we deduce that $r_0(f_\ell(\gamma))=r_0(\gamma)$ and that $r_\ell(f_0(\gamma))=r_\ell(\gamma)$.
More precisely we have that $$(B(1,\leftarrow c))(2^\ell, \leftarrow d)=(B(2^\ell, \leftarrow d))(1,\leftarrow c),$$ is a $2^t$-abacus for $f_0f_\ell(\gamma)=f_\ell f_0(\gamma)$.

Let $x\in\{0,\ldots, 2^t-1\}$ be such that $w_{2^t}(A_x)=1$.
If $x\notin\{r_0(\gamma)-1, r_0(\gamma), r_\ell(\gamma)-2^\ell, r_\ell(\gamma)\}$ then $c_0(A)$ and $c_\ell(A)$ are in positions $(0, r_0(\gamma))$ and $(0, r_\ell(\gamma))$ of $A$ respectively,
by Lemma~\ref{lem:maxhook}(3).
Moreover, positions $(0, r_0(\gamma)-1)$ and $(0, r_\ell(\gamma)-2^{\ell})$ of $A$ are empty, because they were empty in $B$.
By Lemma~\ref{lem:maxhook}(3), we deduce that $$A^{\ell,0}:=[A(2^\ell, \leftarrow c_\ell(A))](1,\leftarrow c_0(A)),$$ is a $2^t$-abacus for $f_\ell f_0(\lambda)$. Similarly $A^{0,\ell}:=[A(1, \leftarrow c_0(A))](2^\ell,\leftarrow c_\ell(A))$ is a $2^t$-abacus for $f_0f_\ell(\lambda)$. Since $A^{0,\ell}$ is equal to $A^{\ell,0}$ we deduce that $\lambda\in\Omega(\ell; n\ |\ \gamma)$.

Suppose now that $x\in\{r_0(\gamma)-1, r_0(\gamma), r_\ell(\gamma)-2^\ell, r_\ell(\gamma)\}$.
For instance let $x=r_0(\gamma)$. Then $c_0(A)$ is the bead in position $(1, r_0(\gamma))$ of $A$, by Lemma~\ref{lem:maxhook}(3). On the other hand $c_\ell(A)=c_\ell(B)$ is the bead in position $(0, r_\ell(\gamma))$ of $A$. It is now clear that the $2^t$-abacus configuration obtained from $A$ by moving bead $c_0(A)$ to position $(1, r_0(\gamma)-1)$ and bead $c_\ell(A)$ to position $(0, r_\ell(\gamma)-2^\ell)$ is a $2^t$-abacus configuration for both $f_0 f_\ell(\lambda)$ and $f_\ell f_0(\lambda)$. Hence they coincide.
A very similar argument applies to the cases where $x\in\{r_0(\gamma)-1, r_\ell(\gamma)-2^\ell, r_\ell(\gamma)\}$.
We conclude that $\Omega(\ell; n\ |\ \gamma)=\mathcal{E}(\gamma, 2^t)$.

From the discussion above it follows immediately that given any $\lambda\in\Omega(\ell; n\ |\ \gamma)$ and considering  the $2^t$-abacus configuration $A$ for $\lambda$ as described above, we have that $c_0(A)$ lies on runner $A_{r_0(\gamma)}$ and $c_\ell(A)$ lies on runner $A_{r_\ell(\gamma)}$. Since $\gamma$ is of type $(2)$, this is enough to deduce that $\lambda$ is of type $(2)$ and therefore that
$\Omega(\ell; n\ |\ \gamma)\subseteq \Omega^2(\ell; n)$.
\end{proof}

\begin{lemma}\label{lem: 1+}
Let $n=2^t+m$ where $2^{\ell}+1\leq m< 2^t$, and let $\gamma\in\Omega^{1+}(\ell; m)$ and $\lambda\in\mathcal{E}(\gamma, 2^t)$. Let $B$ be a normalized $2^t$-abacus for $\gamma$ and $A$ be the $2^t$-abacus for $\lambda$ such that $A^\uparrow=B$.
Let $x\in\{0,1,\ldots, 2^t-1\}$ be such that $w_{2^t}(A_x)=1$. Then $\lambda\in\Omega(\ell; n\ |\ \gamma)$ if and only if $x\notin\{r_0(\gamma)-1, r_\ell(\gamma)-2^\ell, r_\ell(\gamma)-2^\ell-1\}$.
Moreover, $\Omega(\ell; n\ |\ \gamma)\subseteq \Omega^{1+}(\ell; n)$.
\end{lemma}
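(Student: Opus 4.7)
Setting $r := r_0(\gamma) = r_\ell(\gamma)$ (which holds precisely because $\gamma$ is of type $(1+)$), the bead $c := c_0(B) = c_\ell(B)$ sits at $(0, r)$ of the normalized $2^t$-abacus $B$, with $(0, r - 1)$ and $(0, r - 2^\ell)$ both gaps. The first step is to extract from the hypothesis $\gamma \in \Omega(\ell; m)$ the additional fact that $(0, r - 2^\ell - 1)$ is also a gap of $B$: tracking the two iterated abacus operations that realize $f_0 f_\ell(\gamma)$ and $f_\ell f_0(\gamma)$, and demanding that they produce the same partition --- in the spirit of the proof of Lemma~\ref{lem: goodcores} --- forces the ``same bead'' scenario in which $c$ itself is the bead slid at every stage, which in turn is equivalent to $(0, r - 2^\ell - 1)$ being empty in $B$. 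In this picture $f_0 f_\ell(\gamma)$ is the partition whose abacus is obtained from $B$ by moving $c$ to $(0, r - 2^\ell - 1)$.

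Given $\la \in \mathcal{E}(\gamma, 2^t)$, the abacus $A$ with $A^\uparrow = B$ differs from $B$ only on runner $x$. When $x \notin \{r, r - 1, r - 2^\ell, r - 2^\ell - 1\}$, the four positions $(0, r), (0, r-1), (0, r-2^\ell), (0, r-2^\ell-1)$ retain in $A$ the occupancies they have in $B$, so $c$ is still both $c_0(A)$ and $c_\ell(A)$; the four slides needed to compute $f_0 f_\ell(\la)$ and $f_\ell f_0(\la)$ proceed exactly as in the first paragraph, yielding commutativity. When $x = r$, the bead $c$ has been slid down into $(1, r)$; the analogous reasoning in row $1$, combined with Lemma~\ref{odd-hook-notations}-style arguments, gives the same conclusion. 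In both good subcases one has $c_0(A) = c_\ell(A)$, which forces $r_0(\la) = r_\ell(\la)$ and hence $\la \in \Omega^{1+}(\ell; n)$ --- this establishes the ``moreover'' clause.

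The bad cases $x \in \{r-1, r-2^\ell, r-2^\ell-1\}$ are exactly those in which the slide on runner $x$ deposits a bead in a position needed for the ``same bead'' commutation. For example, if $x = r - 1$, then $(0, r-1)$ carries a bead in $A$, so $c$ can no longer be slid one step to the left and $c_0(A) \ne c = c_\ell(A)$; writing out the two resulting $2^t$-abaci $A(1, \leftarrow c_0(A))(2^\ell, \leftarrow \cdot)$ and $A(2^\ell, \leftarrow c)(1, \leftarrow \cdot)$ explicitly exhibits a position at which they disagree, so by Lemma~\ref{abacus-facts}(4) the partitions $f_0 f_\ell(\la)$ and $f_\ell f_0(\la)$ are distinct. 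The cases $x = r - 2^\ell$ and $x = r - 2^\ell - 1$ follow the same template with the roles of the $1$-slide and $2^\ell$-slide swapped, or with both iterated slides competing for the same blocked target. The main technical obstacle is the bookkeeping in these bad cases: one must pin down the replacement bead for $c$ in the blocked slide, which can a priori sit anywhere along row $0$ between the first gap and $c$, so a short subcase analysis keyed to the local occupancies of $B$ around $r$ is unavoidable.
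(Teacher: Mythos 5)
Your proposal follows the paper's proof essentially step for step: the same initial observation that commutativity for $\gamma$ forces $(0,r-2^\ell-1)$ to be a gap so that $f_0f_\ell(\gamma)=f_\ell f_0(\gamma)$ is realized by moving the single bead $c$ from $(0,r)$ to $(0,r-2^\ell-1)$, the same split into the good cases (where $c$, or its row-$1$ avatar when $x=r$, performs all the slides, giving commutativity and type $(1+)$ simultaneously) and the three bad cases, and the same strategy of exhibiting one position at which the abaci $A^{0,\ell}$ and $A^{\ell,0}$ disagree. The only imprecision is in your description of the bad case $x=r-1$: the replacement bead $c_0(A)$ does not lie somewhere in row $0$ but is forced to be the bead at $(-1,r)$ directly above $c$ (all negative-row positions of the normalized $B$ are occupied, and $(-1,r-1)$ is the unique gap created by the down-slide on runner $r-1$), which is precisely how the paper pins down the disagreement at position $(-1,r-1)$.
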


\begin{proof}
Let $\gamma$ be such that $r_0(\gamma)=r_\ell(\gamma)=:r$. Let $c:=c_0(B)=c_\ell(B)$. It is easy to see that the $2^t$-abacus $E$ obtained from $B$ by moving bead $c$ from position $(0, r)$ to position $(0, r-2^\ell-1)$ is a $2^t$-abacus for $f_0f_\ell(\gamma)=f_\ell f_0(\gamma)$.

Suppose that $x\notin\{r_0(\gamma)-1, r_\ell(\gamma)-2^\ell, r_\ell(\gamma)-2^\ell-1\}=\{r-1, r-2^\ell, r-2^\ell-1\}$.
If $x\ne r$, let $c'$ ($=c$)
be the bead in position $(0, r)$ of $A$,
for $x=r$, let $c'$ be the bead in position $(1,r)$ of $A$.
We deduce from Lemma~\ref{lem:maxhook}(3) that $A(1, \leftarrow c')$, $A(2^\ell, \leftarrow c')$ and $A(1+2^\ell, \leftarrow c')$ are $2^t$-abacus configurations for odd partitions $\mu_1, \mu_2$ and $\mu_3$ of $n-1, n-2^\ell$ and $n-2^\ell-1$ respectively. In particular $\mu_1=f_0(\lambda)$ and $\mu_2=f_\ell(\lambda)$. Moreover $\mu_3$ is an odd partition of $n-2^\ell-1$ obtained from $\mu_2$ by removing a $1$-hook. Hence we deduce that $\mu_3=f_0(\mu_2)=f_0f_\ell(\lambda)$. On the other hand we also notice that $\mu_3$ is obtained from $\mu_1$ by removing a $2^\ell$-hook. Hence $\mu_3=f_\ell(\mu_1)=f_\ell f_0(\lambda)$.
We conclude that $\lambda\in\Omega^{1+}(\ell; n)$.

Suppose now that $x\in\{r-2^\ell-1, r-2^\ell, r-1\}$. We will now show that the $2^t$-abaci for $f_0f_\ell(\lambda)$ and $f_\ell f_0(\lambda)$ obtained from $A$ by performing the appropriate bead moves are distinct. Suppose that $x=r-1$ (the other cases are treated in a completely similar way). Then $A$ is obtained from $B$ by sliding down the bead in position $(-1, r-1)$ to position $(0, r-1)$. Hence position $(-1, r-1)$ is empty in $A$. On the other hand, there are beads in positions $(-1, r)$ and  $(-1, r-1-2^\ell)$ of $A$, since $A$ coincides with $B$ on every runner distinct from runner $r-1$. Let $d$ be the bead in position $(-1, r)$ of $A$.
From Lemma~\ref{lem:maxhook}(3), we see that $A^0:=A(1, \leftarrow d)$ is a $2^t$-abacus for $f_0(\lambda)$. Let $A^{0,\ell}$ be the $2^t$-abacus configuration for $f_\ell f_0(\lambda)$ obtained from $A^0$ by moving a bead left. Since there is a bead in position $(-1, r-1-2^\ell)$ of $A^0$, we deduce that $A^{0, \ell}$ is not obtained by moving again bead~$d$;
in particular, $A^{0,\ell}$ has a bead in position $(-1, r-1)$.
On the other hand, calling $e$ ($=c$) the bead in position $(0, r)$ of $A$,  it follows from Lemma~\ref{lem:maxhook}(3) that $A^\ell:=A(2^\ell, \leftarrow e)$ is the $2^t$-abacus for $f_\ell(\lambda)$. Let $e'$ be the bead in position $(0, r-2^\ell)$ of $A^\ell$. Arguing as above one observes that $A^{\ell, 0}:=A(2^\ell+1, \leftarrow e)=A^\ell(1, \leftarrow e')$ is the $2^t$-abacus configuration for $f_0 f_\ell(\lambda)$. It is clear that position $(-1, r-1)$ of $A^{\ell, 0}$ is empty. Hence $A^{\ell, 0}\neq A^{0,\ell}$ and therefore
$\lambda\notin\Omega(\ell; n)$.
\end{proof}

\begin{lemma}\label{lem: 1-}
Let $n=2^t+m$ where $2^{\ell}+1\leq m< 2^t$, and let $\gamma\in\Omega^{1-}(\ell; m)$ and $\lambda\in\mathcal{E}(\gamma, 2^t)$. Let $B$ be a normalized $2^t$-abacus for $\gamma$ and $A$ be the $2^t$-abacus for $\lambda$ such that $A^\uparrow=B$.
Let $x\in\{0,1,\ldots, 2^t-1\}$ be such that $w_{2^t}(A_x)=1$. Then $\lambda\in\Omega(\ell; n\ |\ \gamma)$ if and only if $x\notin\{r_0(\gamma), r_\ell(\gamma), r_\ell(\gamma)+1\}$.
Moreover, $\Omega(\ell; n\ |\ \gamma)\subseteq \Omega^{1-}(\ell; n)$.
\end{lemma}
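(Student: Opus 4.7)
My plan is to run the proof in direct parallel with the proof of Lemma~\ref{lem: 1+}, adapting all bead movements to the $(1-)$ geometry in which the beads $c_0$ and $c_\ell$ realizing the odd $1$-hook and odd $2^\ell$-hook of $\gamma$ are distinct. Write $r_0:=r_0(\gamma)$, $r_\ell:=r_\ell(\gamma)=r_0+2^\ell-1$, and let $c_0,c_\ell$ be the beads at $(0,r_0),(0,r_\ell)$ of $B$. Both would individually land in the same gap at $(0,r_0-1)=(0,r_\ell-2^\ell)$, so commutativity $f_0 f_\ell(\gamma)=f_\ell f_0(\gamma)$ cannot be realized by moving only these two beads. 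My first step is therefore a preliminary analysis of $B$: by inspecting which beads can be slid by $2^\ell$ to a gap of $f_0(\gamma)$, and symmetrically by $1$ to a gap of $f_\ell(\gamma)$, commutativity on $\gamma$ forces the presence of an additional bead $c^+$ at position $(0,r_\ell+1)$ of $B$. The common abacus $E$ for $\mu:=f_0 f_\ell(\gamma)$ is then obtained from $B$ by moving $c^+$ from $(0,r_\ell+1)$ to $(0,r_0-1)$, a net slide of length $2^\ell+1$. This plays exactly the role of the single moving bead $c$ at $(0,r)\mapsto(0,r-2^\ell-1)$ in the proof of Lemma~\ref{lem: 1+}.

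Next, for the forward direction I would assume $x\notin\{r_0,r_\ell,r_\ell+1\}$ and deduce $\lambda\in\Omega(\ell;n\ |\ \gamma)$. The bead slid from row $0$ to row $1$ on runner $x$ is neither $c_0$, $c_\ell$, nor $c^+$. Using Lemma~\ref{lem:maxhook}(3) to identify $c_0(A)$ and $c_\ell(A)$ -- either as $c_0,c_\ell$ themselves when $x$ also avoids the intermediate target positions $r_0-1$ and $r_\ell-2^\ell$, or with the standard row-shift bookkeeping otherwise, as in the $x=r$ subcase of Lemma~\ref{lem: 1+} -- I would then execute both compositions $f_0 f_\ell$ and $f_\ell f_0$ on $A$ following the bead trajectory that realizes $E$ from $B$. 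The extra bead in row $1$ of runner $x$ stays out of the way of every move, and the two resulting abacus configurations coincide.

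For the reverse direction I would treat each forbidden value of $x$ by an explicit abacus computation showing $f_0 f_\ell(\lambda)\neq f_\ell f_0(\lambda)$. In the representative case $x=r_\ell+1$ the bead $c^+$ is displaced to $(1,r_\ell+1)$ of $A$, which obstructs the long commuting slide. Tracing the two orderings produces abaci that agree on runner $r_\ell+1$ (both move $c^+$ one step left within row $1$, but to different columns) yet disagree on runners $r_0$ and $r_\ell$; invoking Lemma~\ref{abacus-facts}(4), since the bead counts match, this forces the underlying partitions to differ. The cases $x=r_0$ and $x=r_\ell$ are handled by analogous bead bookkeeping, mirroring the treatment of the three forbidden positions in the $(1+)$ proof.

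Finally, the inclusion $\Omega(\ell;n\ |\ \gamma)\subseteq\Omega^{1-}(\ell;n)$ drops out of the forward analysis: $c_0(A)$ sits on runner $r_0$ and $c_\ell(A)$ on runner $r_\ell=r_0+2^\ell-1$, so $r_\ell(\lambda)=r_0(\lambda)+2^\ell-1$ and $\lambda$ is itself of type $(1-)$. The main obstacle I anticipate is the preliminary structural observation forcing a bead at $(0,r_\ell+1)$ of $B$; this bead, absent from the $(1+)$ picture, is what produces the asymmetric third member $r_\ell+1$ of the forbidden set. Once it is in place, the rest of the argument is careful but routine abacus bookkeeping parallel to that of Lemma~\ref{lem: 1+}.
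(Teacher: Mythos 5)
Your proposal follows essentially the same route as the paper: the paper's proof likewise first observes that commutativity of $f_0$ and $f_\ell$ on $\gamma$ forces a bead at position $(0, r_0(\gamma)+2^{\ell})=(0, r_\ell(\gamma)+1)$ of $B$ (your $c^{+}$, the paper's $d$), describes the resulting common abacus for $f_0f_\ell(\gamma)=f_\ell f_0(\gamma)$, and then declares the remaining case analysis ``entirely similar to the proof of Lemma~\ref{lem: 1+}''. The only quibble is the parenthetical in your $x=r_\ell(\gamma)+1$ analysis --- in the ordering $f_0$ then $f_\ell$ the displaced bead $c^{+}$ slides $2^{\ell}$ positions (landing on runner $r_0(\gamma)$), not one --- but your stated conclusion that the two final configurations disagree on runners $r_0(\gamma)$ and $r_\ell(\gamma)$ is correct, so this does not affect the argument.
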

\begin{proof}
Let $\gamma$ be such that $r_\ell(\gamma)=r_0(\gamma)+2^\ell -1$ and
set $r:=r_0(\gamma)$. Let $c:=c_0(B)$.
Since $f_0f_\ell(\gamma)=f_\ell f_0(\gamma)$,
it is easy to see that $B$ has a bead $d$ at position $(0,r+2^\ell)$.
Then the $2^t$-abacus obtained from $B$ by moving bead $c$ from position $(0, r)$ to position $(0, r-1)$ and bead $d$ to position $(0,r)$ is a $2^t$-abacus for
$f_\ell f_0(\gamma)$.

The further arguments are entirely similar to the proof of
Lemma~\ref{lem: 1+}.
\end{proof}

\begin{corollary}\label{cor: iteration counting}
Let $n=2^t+m$ where $2^{\ell}+1\leq m< 2^t$. Then
$$|\Omega^{1\pm}(\ell; n)|=(2^t-3)|\Omega^{1\pm}(\ell; m)|
\; \text{ and } \;
|\Omega^{2}(\ell; n)|=2^t|\Omega^{2}(\ell; m)|.$$
In particular,
$$T_{0,\l}(n)=|\Omega(\ell; n)|=(2^t-3)|\Omega^{1}(\ell; m)|+2^t|\Omega^{2}(\ell; m)|.$$
\end{corollary}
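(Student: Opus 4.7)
The plan is to combine Lemma~\ref{lem: core} with the three structural lemmas~\ref{lem: type1 extension and type preserved}, \ref{lem: 1+} and~\ref{lem: 1-} in the obvious way: count the odd extensions of each admissible $2^t$-core, stratified by its type $(1+)$, $(1-)$ or $(2)$.

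First, by Lemma~\ref{lem: core} every $\la\in \Omega(\ell; n)$ satisfies $C_{2^t}(\la)\in \Omega(\ell; m)$, and conversely $\Omega(\ell; n)$ is the disjoint union of the sets $\Omega(\ell; n\mid\gamma)$ as $\gamma$ ranges over $\Omega(\ell; m)$. I would then split this union according to the type $\epsilon\in\{1+,1-,2\}$ of $\gamma$, writing
\[
|\Omega(\ell; n)|
= \sum_{\gamma\in \Omega^{2}(\ell;m)} |\Omega(\ell; n\mid\gamma)|
+ \sum_{\gamma\in \Omega^{1+}(\ell;m)} |\Omega(\ell; n\mid\gamma)|
+ \sum_{\gamma\in \Omega^{1-}(\ell;m)} |\Omega(\ell; n\mid\gamma)|.
\]

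Next I would evaluate each sum using the lemmas just proved. For $\gamma\in\Omega^{2}(\ell;m)$, Lemma~\ref{lem: type1 extension and type preserved} gives $\Omega(\ell; n\mid\gamma)=\mathcal{E}(\gamma,2^t)\subseteq\Omega^{2}(\ell; n)$, and by the main result of~\cite{B} (recalled before Proposition~\ref{prop: bases}) one has $|\mathcal{E}(\gamma,2^t)|=2^t$ since $|\gamma|=m<2^t$; so this bloc contributes $2^t|\Omega^{2}(\ell;m)|$ and lies entirely in $\Omega^{2}(\ell;n)$. For $\gamma\in\Omega^{1+}(\ell;m)$, Lemma~\ref{lem: 1+} states that $\la\in\mathcal{E}(\gamma,2^t)$ belongs to $\Omega(\ell; n\mid\gamma)$ iff the runner index $x\in\{0,\ldots,2^t-1\}$ on which the added $2^t$-hook sits lies outside the set $F^+:=\{r_0(\gamma)-1,\ r_\ell(\gamma)-2^\ell,\ r_\ell(\gamma)-2^\ell-1\}$; moreover $\Omega(\ell; n\mid\gamma)\subseteq\Omega^{1+}(\ell;n)$. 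Using $r_0(\gamma)=r_\ell(\gamma)=:r$ (type $(1+)$) and $\ell\ge 1$, the three elements $r-1,\ r-2^\ell,\ r-2^\ell-1$ of $F^+$ are pairwise distinct, so $|\Omega(\ell; n\mid\gamma)|=2^t-3$, contributing $(2^t-3)|\Omega^{1+}(\ell;m)|$ to $|\Omega^{1+}(\ell;n)|$. The argument for $\gamma\in\Omega^{1-}(\ell;m)$ is verbatim the same with $F^-:=\{r_0(\gamma),r_\ell(\gamma),r_\ell(\gamma)+1\}$ supplied by Lemma~\ref{lem: 1-}; since $r_\ell(\gamma)=r_0(\gamma)+2^\ell-1$ and $\ell\ge 1$, the three elements are again distinct, yielding contribution $(2^t-3)|\Omega^{1-}(\ell;m)|$ to $|\Omega^{1-}(\ell;n)|$.

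Summing these contributions gives the first two identities, and since $\Omega^{1}(\ell;n)=\Omega^{1+}(\ell;n)\cup\Omega^{1-}(\ell;n)$ is a disjoint union, the final formula for $T_{0,\l}(n)=|\Omega(\ell; n)|$ follows. The only non-bookkeeping point to check, and the mildest possible obstacle, is the distinctness of the three forbidden positions in $F^+$ and $F^-$ (so that one really loses $3$ extensions and not fewer), but this is immediate from $1\le \ell <t$; all three positions also lie in $\{0,\ldots,2^t-1\}$ because $r=r_0(\gamma)\ge 2^\ell$ by Lemma~\ref{odd-hook-notations} and $r+2^\ell\le 2^t-1$ since $|\gamma|=m<2^t$, so no runner indices are lost at the boundary.
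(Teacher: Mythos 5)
Your proposal is correct and follows essentially the same route as the paper, whose proof is just the one-line observation that the claim ``follows directly'' from Lemmas~\ref{lem: type1 extension and type preserved}, \ref{lem: 1+} and~\ref{lem: 1-} together with $|\mathcal{E}(\gamma,2^t)|=2^t$; your write-up merely makes the core-by-core bookkeeping and the distinctness of the three forbidden runners explicit. One small imprecision in your boundary check: in the $(1+)$ case the bound $r=r_0(\gamma)\ge 2^\ell$ only gives $r-2^\ell-1\ge -1$, so what you actually need is $r\ge 2^\ell+1$, which does hold for $\gamma\in\Omega^{1+}(\ell;m)$ (the proof of Lemma~\ref{lem: 1+} places a bead of the common abacus for $f_0f_\ell(\gamma)=f_\ell f_0(\gamma)$ at position $(0,r-2^\ell-1)$, forcing this) --- but this is implicit in the lemma you are citing, so it does not affect the validity of the argument.
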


\begin{proof}
Recalling that $|\mathcal{E}(\gamma, 2^t)|=2^t$ whenever $|\gamma|<2^t$ by the main result of \cite{B},
the statement follows directly from Lemmas~\ref{lem: type1 extension and type preserved},~\ref{lem: 1+} and~\ref{lem: 1-}.
\end{proof}

In order to obtain an explicit closed formula for $|\Omega(\ell; n)|$ for any given natural number~$n$, it is now enough to compute the values $|\Omega^{1}(\ell; m)|$ and $|\Omega^{2}(\ell; m)|$ for any $m\in\mathbb{N}$ such that $2^\ell +1 \leq m< 2^{\ell+1}$ or such that $m=2^t+q$ for some $0\leq q\leq 2^\ell$.

\subsection{Base cases}
In this subsection we let $n=2^t+m$ with $m<2^t$, where $t\geq \ell$ and $1\leq m\le 2^\ell$.
The following setup and notation for the case where $m<2^\ell$
will be used throughout most of this section, before we turn to the case $m=2^\ell$.

\begin{notation}\label{not: 3.10}
Let $\lambda\vdasho n$ and let $\gamma:=C_{2^t}(\lambda)\vdasho m$.
Let $B$ be a normalized $2^{\ell}$-abacus for $\gamma$.
Since $m<2^\ell$ we know from Lemma~\ref{odd-hook-notations} that $B$ does not have beads in rows $i\geq 1$. Let $c=c_0(B)$ so that $B^0:=B(1,\leftarrow c)$ is a $2^\ell$-abacus for $f_0(\gamma)$. Then $c$ is in position $(0,b_0)$ of $B$, where $b_0=r_0(B) \in\{1,\ldots, 2^\ell-1\}$.

Let $A$ be the $2^\ell$-abacus for $\lambda$ such that $A^\uparrow=B$. For $j\in\{0,\ldots, 2^{\ell}-1\}$ let $\mu_j$ be the partition represented on runner $A_j$ (seen as a $1$-abacus).
As observed in Remark~\ref{rem: abacus}, we know that $(\mu_0,\ldots, \mu_{2^{\ell}-1})$ equals $\mathcal{Q}_{\ell}(\lambda)$ (up to a cyclic shift of its components). Hence we deduce that there exists a unique $x\in\{0,\ldots, 2^\ell-1\}$ such that $\mu_x\in\mathcal{H}(2^{t-\ell})$ and such that $\mu_y=\emptyset$ for all $y\neq x$.
In particular we have that $w_{2^\ell}(A)=w_{2^\ell}(A_x)=2^{t-\ell}$ and $w_{2^\ell}(A_y)=0$ for all $y\neq x$. Let $A^\ell$ be the $2^\ell$-abacus for $f_\ell(\lambda)$ such that $(A^\ell)^\uparrow=B$.
The above discussion shows that $A^\ell$ is obtained from $A$ by sliding a bead up one row on runner $A_x$, to obtain a $1$-abacus configuration for $f_0(\mu_x)$. On the other hand a $2^\ell$-abacus for $f_0(\lambda)$ can be obtained from $A$ by sliding a bead $c'$ in position $(i, b_0)$ of $A$ to position $(i, b_0-1)$. This follows from Lemma~\ref{lem:maxhook}(3) because
$w_{2^\ell}(A(1,\leftarrow c'))=w_{2^\ell}(A)$ (by direct verification, or more generally by \cite[Lemma 2.5]{GLM}) and $(A(1,\leftarrow c'))^\uparrow=B^0$. We call $A^0:=A(1,\leftarrow c')$.
\end{notation}

\begin{lemma}\label{lem: runner x not b0}
Let $\lambda\vdasho n$ be as in Notation~\ref{not: 3.10}. If $x\notin\{b_0-1, b_0\}$ then $f_0f_\ell(\lambda)= f_\ell f_0(\lambda)$. In this case, $\lambda\in\Omega^2(\ell; n)$.
\end{lemma}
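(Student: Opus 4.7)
The plan is to exploit that, when $x\notin\{b_0-1,b_0\}$, the abacus moves realising $f_0$ and $f_\l$ on $A$ take place on disjoint runners and therefore commute; this will yield $f_0f_\l(\la)=f_\l f_0(\la)$, and the disjointness of the two beads involved will also force the type to be $(2)$.

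First I would pin down the $f_0$ move on $A$ precisely. Since $x\ne b_0$ and $x\ne b_0-1$, runners $A_{b_0}$ and $A_{b_0-1}$ coincide with the corresponding runners of $B$. By Lemma~\ref{abacus-facts}(3), in $B$ runner $b_0$ has beads exactly at positions $(z,b_0)$ with $z\le 0$, while runner $b_0-1$ has beads only in rows $z<0$ (the position $(0,b_0-1)$ must be a gap for $B(1,\leftarrow c)$ to be a valid abacus). Consequently, the only row $i$ with a bead at $(i,b_0)$ and a gap at $(i,b_0-1)$ in $A$ is $i=0$, so the bead $c'$ of Notation~\ref{not: 3.10} coincides with $c$, and $A^0$ differs from $A$ only by moving $c$ from $(0,b_0)$ to $(0,b_0-1)$. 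The $f_\l$ move, by construction, slides a single bead up one row on runner $A_x$ and affects only runner $x$.

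Since $x\notin\{b_0-1,b_0\}$, the two moves act on disjoint runners, so they may be performed in either order to produce the same abacus, which I call $A'$. To see that $A'$ represents both iterated partitions I would reason as follows. The abacus $A^0$ represents the odd partition $f_0(\la)$, and its $2^\l$-quotient still has $\mu_x$ on runner $x$ and empty partitions elsewhere, so by Theorem~\ref{theo: INOT} applied to $f_0(\la)$ together with Remark~\ref{compatible-hooks}, the unique odd $2^\l$-hook of $f_0(\la)$ is realised by precisely the up-slide on runner $x$ used to obtain $A^\l$ from $A$; hence $A'$ is a $2^\l$-abacus for $f_\l f_0(\la)$. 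Symmetrically, applying the same horizontal move from $(0,b_0)$ to $(0,b_0-1)$ on $A^\l$ (which is a valid move there, since runners $b_0$ and $b_0-1$ were untouched by the $f_\l$ slide) also yields $A'$; as $A'$ represents a partition obtained from the odd partition $f_\l(\la)$ by a $1$-hook removal, and we already know $A'$ represents an odd partition, the uniqueness in Theorem~\ref{theo: INOT} identifies it with $f_0 f_\l(\la)$. Thus $f_0f_\l(\la)=f_\l f_0(\la)$.

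For the claim $\la\in\Omega^2(\l;n)$, I would observe that the bead $c_0(A)$ triggering $f_0$ lies on runner $b_0$ while the bead $c_\l(A)$ triggering $f_\l$ lies on runner $x$, and that the targets of the two slides lie on runners $b_0-1$ and $x$ respectively. Type $(1+)$ is equivalent to $c_0(A)=c_\l(A)$ and therefore forces $x=b_0$; type $(1-)$ is equivalent to the two moves sharing the same target position and therefore forces $x=b_0-1$. Both are excluded by hypothesis, so $\la$ is of type~$(2)$. The delicate point throughout is the identification step in the second paragraph: one must carefully invoke Theorem~\ref{theo: INOT} together with the quotient/runner correspondence to certify that each intermediate abacus produced by the commuting moves really represents the unique odd-hook removal, rather than some other hook removal whose result would lie outside $\cO$.
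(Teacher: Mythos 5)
Your proof is correct and follows essentially the same route as the paper: both realise $f_0$ as the horizontal slide $(0,b_0)\to(0,b_0-1)$ and $f_\ell$ as the up-slide on runner $x$, observe that these moves act on disjoint runners and hence commute, and identify the common abacus with both composites via the uniqueness in Theorem~\ref{theo: INOT} (the paper checks oddness of $A^{\ell,0}$ directly with Theorem~\ref{thm:oddcriterion}, while you get it for free from the already-identified $f_\ell f_0(\lambda)$, which is a slightly slicker ordering of the same ideas). Your explicit same-bead/same-target argument for the type-$(2)$ claim is a welcome addition, since the paper leaves that part implicit.
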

\begin{proof}
Let $A^\ell$ be the $2^\ell$-abacus for $f_\ell(\lambda)$ described in Notation~\ref{not: 3.10}.
Let $d$ be the bead in position $(0,b_0)$ of $A^\ell$. (There is such a bead in $A^\ell$ because $(A^\ell)_y=B_y$ for all $y\neq x$). We claim that $A^{\ell, 0}:=A^\ell(1, \leftarrow d)$ is a $2^\ell$-abacus for $f_0f_\ell(\lambda)$. By Theorem~\ref{theo: INOT}, we observe that in order to show this it is enough to show that $A^{\ell, 0}$ is a $2^\ell$-abacus for an odd partition of $n-2^\ell-1$. The previous statement holds by Theorem~\ref{thm:oddcriterion} because $w_{2^\ell}(A^{\ell,0})=w_{2^\ell}(A^\ell)$ and $(A^{\ell, 0})^\uparrow = B^0$.

Let $A^0$ be the $2^\ell$-abacus for $f_0(\lambda)$ described at the end of Notation~\ref{not: 3.10}.
Since $x\notin\{b_0-1, b_0\}$, we observe that $w_{2^\ell}((A^0)_y)=0$ for all $y\neq x$ and that $(A^0)_x=A_x$ (here we regard two runners equal if they coincide as $1$-abaci). In particular $(A^0)_x$ is a $1$-abacus for $\mu_x$ (as defined in Notation~\ref{not: 3.10}).
This shows that a $2^\ell$-abacus $A^{0,\ell}$ for $f_\ell f_0(\lambda)$ can be obtained from $A^0$ by sliding a bead up one row on runner $(A^0)_x$ to obtain a $1$-abacus for $f_0(\mu_x)$. Hence we deduce that $(A^{0,\ell})_{x}=(A^{\ell})_{x}=(A^{\ell,0})_{x}$. Similarly we also have that $(A^{0,\ell})_{y}=(A^{\ell,0})_{y}$ for all $y\neq x$.
\end{proof}

\begin{lemma}\label{lem: runner b0}
Let $\lambda\vdasho n$ be as in Notation~\ref{not: 3.10},
$u=2^{t-\ell}$. If $x=b_0$ then $f_0f_\ell(\lambda)= f_\ell f_0(\lambda)$ if and only if $\mu_{x}=(u)$. In this case, $\lambda\in\Omega^{1+}(\ell; n)$.
\end{lemma}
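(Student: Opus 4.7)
The plan is to execute a direct bead-slide analysis on the $2^\ell$-abacus $A$, using the setup of Notation~\ref{not: 3.10}. Write $\mu_x = (u - w, 1^w)$ for some $0 \le w \le u - 1$; then $A_x$ is obtained from $B_x = \{-a,\ldots,-1,0\}$ by deleting the bead at row $-w$ (for $w = 0$ this means moving $c$ itself) and inserting a bead at row $u - w$, while $A_y = B_y$ for $y \ne x$. A preliminary observation is that, because $c = c_0(B)$ is slid by $f_0$ into a gap in $B$, the position $(0, x - 1)$ is necessarily empty in $B$, so runner $x - 1$ of $A$ contains no bead in any row $\ge 0$. This restricts the candidate left-slides realising $f_0(\lambda)$ to positions $(0, x)$ and $(u - w, x)$.

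For the if direction I would handle $w = 0$ first. Here $A_x = \{-a, \ldots, -1, u\}$, and both $f_\ell$ and $f_0$ act on the same bead at $(u, x)$: $f_\ell$ slides it up to $(u - 1, x)$ (encoding $f_0((u)) = (u - 1)$ on the quotient), while $f_0$ slides it left to $(u, x - 1)$. Composing in either order moves this common bead to $(u - 1, x - 1)$ with no other bead of $A$ touched, so $A^{0, \ell} = A^{\ell, 0}$ and the two compositions coincide. The type-$(1+)$ assertion follows because the same bead encodes the hand of both the odd $1$-hook and the odd $2^\ell$-hook, giving $z_0(\lambda) = z_\ell(\lambda)$.

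For the only-if direction I would first pin down the bead $f_0$ moves when $w \ge 1$. The two candidate left-slides from column $x$ are $(0, x) \to (0, x - 1)$ and $(u - w, x) \to (u - w, x - 1)$. Applying the oddness criterion (Theorem~\ref{thm:oddcriterion}): the first yields the single non-empty $2^\ell$-quotient component $(u - w + 1, 1^{w - 1})$ on runner $x$, whose character degree $\binom{u - 1}{w - 1}$ is odd by Lucas's theorem since $u - 1 = 2^{t - \ell} - 1$ has all-ones binary expansion; the second gives components of sizes $w$ and $u - w$ on runners $x$ and $x - 1$, whose sum is the single binary digit $u = 2^{t - \ell}$, so they cannot be $2$-disjoint for $1 \le w \le u - 1$. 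Hence $f_0$ moves the bead at $(0, x)$. A parallel application of Lemma~\ref{lem: hooks} shows that $f_\ell$ moves the bead at $(-w + 1, x)$ when $w$ is odd and at $(u - w, x)$ when $w$ is even.

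A repeat of the same Lucas-theorem oddness check, applied this time to $A^0$ and to $A^\ell$, selects the unique odd slide for the second operator in each composition. Tracking through the resulting bead movements, one reads off that runner $x - 1$ of $A^{\ell, 0}$ always contains a bead in a strictly positive row (at $u - w$ for $w$ odd or at $u - w - 1$ for $w$ even, with $u - w - 1 \ge 1$), whereas runner $x - 1$ of $A^{0, \ell}$ equals $\{-a, \ldots, -1, 0\}$ with no positive-row bead. Since both abaci have the same number of beads, Lemma~\ref{abacus-facts}(4) yields $A^{\ell, 0} \ne A^{0, \ell}$, hence $f_0 f_\ell(\lambda) \ne f_\ell f_0(\lambda)$. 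The main obstacle is the parity-based case split in $w$, which governs both the target of the $f_\ell$-slide and which of the two candidate slides is odd at the second stage; each instance reduces to a short Lucas-theorem verification on binomials of the form $\binom{u - 1}{\cdot}$ or $\binom{u - 2}{\cdot}$.
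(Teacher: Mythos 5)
Your proposal is correct and follows essentially the same route as the paper: a direct bead-slide analysis on the $2^\ell$-abacus of Notation~\ref{not: 3.10}, pinning down the odd slide at each stage via Lemma~\ref{lem: hooks} and the oddness criterion, and exhibiting a position where $A^{0,\ell}$ and $A^{\ell,0}$ disagree. The only differences are cosmetic: you treat both parities of $w$ explicitly and use runner $x-1$ as the distinguishing witness, whereas the paper takes $w$ odd (the even case being analogous) and distinguishes the two abaci at position $(-w,x)$.
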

\begin{proof}
Suppose that $\mu_x=(u)$. Let $d$ be the bead in $A$ lying in position $(u, x)$. It is easy to see that both the abaci $A^{\ell, 0}$ and $A^{0,\ell}$ are obtained from $A$ by moving bead $d$ to position $(u-1, x-1)$. Hence they coincide and therefore we have that $f_0f_\ell(\lambda)= f_\ell f_0(\lambda)$.

Suppose now that $\mu_x=(u-w, 1^w)$ for some $w\in \{1,\ldots,u-1\}$.
Let us also assume that $w$ is odd (the case $w$ even is completely analogous).
Then we have beads in positions $(j, x)$ of $A$ for all $j\in\{-w+1, -w+2,\ldots, 0\}$ and a gap in position $(-w, x)$.
Moreover, the highest gap on runner $x-1$ of $A$ is in position $(0, x-1)$ since $A_{x-1}=B_{x-1}$.
This shows that $A^0=A(1,\leftarrow d)$ where $d$ is the bead in position $(0,x)$ of $A$ (this follows from Lemma~\ref{lem:maxhook}(3) and Theorem~\ref{theo: INOT}).
We observe that $(A^0)_x$ is a $1$-abacus configuration for the partition $\overline{\mu_x}:=(u-w+1, 1^{w-1})$.

Let $c'$ be the bead in position $(-w+1,x)$ of $A$ and let $e$ be the bead in position $(u-w,x)$ of $A$.

By Lemma~\ref{lem: hooks} we have that $f_0(\mu_x)=(u-w, 1^{w-1})$. Hence $A^\ell$ is obtained from $A$ by sliding bead $c'$ up to position $(-w,x)$. Since there is a bead in position $(-w,x-1)$ of $A$ we deduce that $A^{\ell, 0}$ has a bead in position $(-w,x)$.

On the other hand also $f_0(\overline{\mu_x})=(u-w, 1^{w-1})$, by Lemma~\ref{lem: hooks}.
Therefore $A^{0,\ell}$ is obtained from $A^0$ by sliding bead $e$ up to position $(u-w-1,x)$.
Hence $A^{0,\ell}$ has a gap in position $(-w,x)$. We conclude that $A^{0,\ell}\neq A^{\ell,0}$, as desired.
\end{proof}

\begin{lemma}\label{lem: runner b0-1}
Let $\lambda\vdasho n$ be as in Notation~\ref{not: 3.10}. If $x=b_0-1$ then $f_0f_\ell(\lambda)= f_\ell f_0(\lambda)$ if and only if $\mu_{x}=(1^{2^{t-\ell}})$. In this case, $\lambda\in\Omega^{1-}(\ell; n)$.
\end{lemma}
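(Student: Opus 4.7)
My plan is to mirror the abacus analysis in the proof of Lemma~\ref{lem: runner b0}, exploiting the duality between the horizontal hook case $\mu_x = (u)$ (corresponding to $x = b_0$, treated there) and the vertical hook case $\mu_x = (1^u)$ (corresponding to $x = b_0 - 1$, needed here). Throughout I keep the conventions of Notation~\ref{not: 3.10}: runner $A_{b_0}$ coincides with $B_{b_0}$, a packed column of beads at rows $-a, \ldots, 0$, while $A_x$ is determined by the shape of~$\mu_x$.

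\textbf{Forward direction.} Assume $\mu_x = (1^u)$. Then $A_x$ carries beads at rows $\{-a, \ldots, -u-1\} \cup \{-u+1, \ldots, 0\}$, with its unique gap at $(-u, x)$. Let $d$ be the bead of $A$ at $(-u+1, b_0)$. Using Lemma~\ref{lem:maxhook}(3) and Theorem~\ref{theo: INOT}, I will verify that both $A^{\ell, 0}$ and $A^{0, \ell}$ coincide with the abacus $A'$ obtained from $A$ by relocating $d$ to $(-u, x)$:
\begin{itemize}
\item In the chain $A \to A^\ell \to A^{\ell, 0}$, the only admissible $f_\ell$-up-slide on $A_x$ sends the bead at $(-u+1, x)$ into the gap at $(-u, x)$, opening a new gap at $(-u+1, x)$; the subsequent $f_0$-slide then sends $d$ one step left into this new gap.
\item In the chain $A \to A^0 \to A^{0, \ell}$, the $f_0$-slide sends the bead at $(-u, b_0)$ (directly above $d$) one step left into the gap at $(-u, x)$; the subsequent $f_\ell$-slide sends $d$ one row up into the new gap at $(-u, b_0)$.
\end{itemize}
Both orders produce $A'$, so $f_0 f_\ell(\lambda) = f_\ell f_0(\lambda)$. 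Moreover, the $f_\ell$- and $f_0$-slides on $A$ both target position $(-u, x)$, and translating back to the Young diagram one sees that the $2^\ell$-hook corresponding to a 1-hook of the vertical shape $\mu_x=(1^u)$ is a vertical strip whose bottom cell is precisely the odd 1-hook of $\lambda$; hence $s_0(\lambda) = s_\ell(\lambda)$, that is, $\lambda \in \Omega^{1-}(\ell; n)$.

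\textbf{Reverse direction.} Assume $\mu_x = (u - w, 1^w)$ for some $w \in \{0, 1, \ldots, u - 2\}$. Then $A_x$ has its gap at row $-w-1$ and an isolated bead at row $u - w - 1 \ge 1$. Adapting the case analysis at the end of the proof of Lemma~\ref{lem: runner b0}, I will identify the unique $f_\ell$- and $f_0$-slides on $A$, carry out the second slide in each order, and pinpoint a position at which $A^{\ell, 0}$ and $A^{0, \ell}$ disagree. By Lemma~\ref{abacus-facts}(4) these abaci have the same bead count, so they represent distinct partitions, whence $f_0 f_\ell(\lambda) \ne f_\ell f_0(\lambda)$.

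\textbf{Main obstacle.} The bulk of the work is in the reverse direction, which splits into sub-cases depending on $w$ and on which bead the $f_0$-slide engages (a packed-column bead of $A_{b_0}$, a middle-strip bead of $A_x$, or the isolated hand-bead of $A_x$). The disagreement between $A^{\ell, 0}$ and $A^{0, \ell}$ is forced by the distinct gap structure of $A_x$, but the bookkeeping -- exactly as in the analogous step of the proof of Lemma~\ref{lem: runner b0} -- is the most tedious part of the argument.
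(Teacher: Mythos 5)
Your proposal is correct in outline but takes a genuinely different --- and much more laborious --- route than the paper. The paper's entire proof is a one-line reduction by conjugation: since $f_j(\nu')=f_j(\nu)'$ for odd partitions (conjugation preserves character degrees and is compatible with hook removal), and since passing from $\lambda$ to $\lambda'$ reflects the $2^\ell$-abacus so that the configuration ``$x=b_0-1$ with quotient component $\mu_x$'' becomes ``$x=b_0$ with quotient component $\mu_x'$'' (and exchanges the types $(1+)$ and $(1-)$), the statement is exactly Lemma~\ref{lem: runner b0} applied to $\lambda'$. You instead redo the abacus analysis from scratch. Your forward direction is complete and checks out: the beads at $(-u+1,x)$, $(-u,b_0)$ and $(-u+1,b_0)$ are correctly identified, both orders of slides land on the same configuration, and the same-target-position criterion does give type $(1-)$; the oddness of the intermediate configurations, which you need in order to invoke the uniqueness in Theorem~\ref{theo: INOT}, follows from Theorem~\ref{thm:oddcriterion} exactly as in Lemma~\ref{lem: runner x not b0}. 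The reverse direction, however, is only a plan: you acknowledge that the sub-case analysis (on $w$ and on which bead the $f_0$-slide engages) still has to be carried out, and no explicit position of disagreement between $A^{\ell,0}$ and $A^{0,\ell}$ is exhibited. That analysis does go through (it is the mirror image of the one written out in the proof of Lemma~\ref{lem: runner b0}), so nothing in your approach would fail --- but the conjugation observation buys you all of it for free, which is why the paper can dispatch this lemma in a single sentence. If you keep the direct route, the reverse direction must be written out in full before the proof is complete.
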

\begin{proof}
In this case we observe that $\lambda'$ satisfies the hypothesis of Lemma~\ref{lem: runner b0}. Hence the statement follows.
\end{proof}

\begin{teo}
Let $\lambda\vdasho n$;
we assume Notation~\ref{not: 3.10} and set
$u=2^{t-\ell}$. Then the following are equivalent.
\begin{itemize}
\item[(i)] $f_0f_\ell(\lambda)\neq f_\ell f_0(\lambda)$.

\item[(ii)] $x=b_0$ and $\mu_x\neq (u)$ or  $x=b_0-1$ and $\mu_x\neq (1^{u})$.

\item[(iii)] $\lambda$ or $\lambda'$ are such that $x=b_0$ and $\mu_x\neq (u)$.
\end{itemize}
\end{teo}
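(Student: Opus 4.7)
The plan is to derive the theorem essentially immediately from the three preceding lemmas together with a conjugation symmetry argument. The bulk of the work has already been done in Lemmas~\ref{lem: runner x not b0}, \ref{lem: runner b0}, and \ref{lem: runner b0-1}, each of which pins down the behaviour in one of the three possible configurations of the parameter $x$ relative to $b_0$.

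For (i) $\Leftrightarrow$ (ii), I would perform a case analysis on the position $x$ of the unique runner of $A$ carrying a non-empty quotient component. By Notation~\ref{not: 3.10}, $x$ ranges over $\{0, 1, \ldots, 2^\ell - 1\}$ and $\mu_x$ is a hook partition of $u = 2^{t - \ell}$, while $b_0 \in \{1, \ldots, 2^\ell - 1\}$, so the three cases $x \notin \{b_0 - 1, b_0\}$, $x = b_0$, $x = b_0 - 1$ are mutually exclusive and exhaustive. In the first case, Lemma~\ref{lem: runner x not b0} gives $f_0 f_\ell(\lambda) = f_\ell f_0(\lambda)$, so both (i) and (ii) fail. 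In the second case, Lemma~\ref{lem: runner b0} equates (i) with the first clause of (ii); in the third, Lemma~\ref{lem: runner b0-1} equates (i) with the second clause. This yields the equivalence.

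For (ii) $\Leftrightarrow$ (iii), I would invoke the conjugation symmetry that the authors have already exploited in deriving Lemma~\ref{lem: runner b0-1} from Lemma~\ref{lem: runner b0}. The one clause appearing in (iii) applied to $\lambda$ reproduces the first clause of (ii). The task is therefore to show that this same clause applied to $\lambda'$ is equivalent to the second clause of (ii) applied to $\lambda$. Passing from $\lambda$ to $\lambda'$ replaces each quotient component $\mu_j$ by its conjugate (possibly with a reindexing of the runners) and, after renormalising the abacus, exchanges the roles of runner $b_0 - 1$ and runner $b_0$. Under this exchange, the condition ``$x = b_0 - 1$ with $\mu_x \neq (1^u)$'' for $\lambda$ translates exactly to ``$x' = b_0'$ with $\mu_{x'}' \neq (u)$'' for $\lambda'$. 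Combining this with the identification of the first clauses on the two sides gives (ii) $\Leftrightarrow$ (iii).

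The main technical obstacle is being careful about the conjugation bookkeeping on the normalized $2^\ell$-abacus, namely that transposition followed by renormalisation swaps runner indices $b_0$ and $b_0 - 1$ and conjugates each $\mu_j$. This is a routine verification from the definition of the abacus representation but must be spelled out; once it is in hand, both equivalences collapse to routine case inspection against the three lemmas cited above.
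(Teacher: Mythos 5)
Your proposal is correct and follows essentially the same route as the paper, whose entire proof is the one-line observation that the theorem is a consequence of Lemmas~\ref{lem: runner x not b0}, \ref{lem: runner b0} and~\ref{lem: runner b0-1}. Your explicit three-way case split on $x$ versus $b_0$ and the conjugation bookkeeping for (ii)~$\Leftrightarrow$~(iii) merely spell out what the paper leaves implicit (the conjugation step being exactly the mechanism already used to deduce Lemma~\ref{lem: runner b0-1} from Lemma~\ref{lem: runner b0}).
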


\begin{proof}
This is a consequence of Lemmas~\ref{lem: runner x not b0},~\ref{lem: runner b0},~\ref{lem: runner b0-1}.
\end{proof}

The unique base case still missing is the following for which we have to adapt our notation, using again the $2^t$-abacus as in Notation~\ref{def: 2 to the t abacus}.

\begin{notation}\label{not: m=2^l}
Let $n=2^t+2^{\ell}$, with $0<\ell < t$, i.e., we now consider the case $m=2^\ell$.
\\
Let $\lambda\vdasho n$,
and let $\gamma:=C_{2^t}(\lambda)\vdasho m$. Since $m=2^\ell$,
$\gamma$ is a hook, say $\gamma=(m-w,1^w)$, for some $w\in \{0,1,\ldots,m-1\}$.
Let $B$ be a normalized $2^t$-abacus for $\gamma$. Then $B$ does not have beads in rows $i\geq 1$, by Lemma~\ref{odd-hook-notations}.
Let $c=c_0(B)$ so that $B^0:=B(1,\leftarrow c)$ is a $2^t$-abacus for $f_0(\gamma)$.
Since $\gamma$ is a hook,
$c$ is in position $(0,b_0)$ of $B$ with $b_0=1$ or $b_0=2^\ell$.
The bead $d=c_\ell(B)$ in $B$ such that $B^\ell:=B(2^\ell,\leftarrow d)$
is a $2^t$-abacus for $f_\ell(\gamma)=\emptyset$
is clearly in position $(0,b_\ell)$ with $b_\ell=2^\ell$.

Let $A$ be the $2^t$-abacus for $\lambda$ such that $A^\uparrow=B$. For $j\in\{0,\ldots, 2^t-1\}$ let $\mu_j$ be the partition represented on runner $A_j$ (seen as a $1$-abacus).
So there exists a unique $x\in\{0,\ldots, 2^t-1\}$ such that $\mu_x=(1)$ and $\mu_y=\emptyset$ for all $y\neq x$; we also allow $x=-1$ and
identify $\mu_{-1}=\mu_{2^t-1}$ if needed.
As before, we note that a $2^t$-abacus for $f_0(\lambda)$ can be obtained from $A$ by moving a bead $c'$ one position to the left. We call $A^0:=A(1,\leftarrow c')$.
Similarly, let $A^\ell:=A(2^\ell,\leftarrow d')$
be the $2^t$-abacus for $f_\ell(\lambda)$,
obtained from $A$ by moving a bead $d'$ $2^\ell$ positions to the left.

Since $f_k(C_{2^t}(\lambda))=C_{2^t}(f_k(\lambda))$ for $k\le \ell$
by Lemma~\ref{lem: f commutes with core},
we know that $c'$ and $d'$ are on the same runners $b_0$ and
$b_\ell$, respectively,
as the beads $c$ and $d$ of $B$.
\end{notation}

\begin{lemma}\label{lem: m=2 to ell}
Let $n=2^t+2^{\ell}$, with $0<\ell < t$.
Let $\gamma\in \Omega(\ell;2^\ell)=\Omega^1(\ell;2^\ell)$, $\gamma=(2^\ell-w,1^w)$.
Let $\lambda\in \mathcal{E}(\gamma,2^t)$
and
$x$ as in Notation~\ref{not: m=2^l}.
Then
$\lambda\in \Omega(\ell;n\mid \gamma)$
if and only if
$x\in \{0,(-1)^{w-1},2^\ell,2^\ell+(-1)^{w-1}\}$.

More precisely, when $\gamma \in \Omega^{1+}(\ell; 2^\ell)$, then
$\lambda\in \Omega^{1+}(\ell;n\mid \gamma)$ for $x\in \{2^\ell-1,2^\ell\}$
and $\lambda\in \Omega^{2}(\ell;n\mid \gamma)$ for $x\in \{0,2^t-1\}$.
When $\gamma \in \Omega^{1-}(\ell; 2^\ell)$, then
$\lambda\in \Omega^{1-}(\ell;n\mid \gamma)$ for $x\in \{0,2^\ell+1\}$
and $\lambda\in \Omega^{2}(\ell;n\mid \gamma)$ for $x\in \{1,2^\ell\}$.
In particular,
$$
|\Omega^2(\ell;n)|=2^{\ell+1},\;
|\Omega^{1\pm}(\ell;n)|=2^{\ell}   
\; \text{ and } \;
T_{0,\l}(n)=|\Omega(\ell;n)|=2^{\ell+2}\:.$$
\end{lemma}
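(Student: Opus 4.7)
The plan is to follow the $2^t$-abacus technique developed for Lemmas~\ref{lem: runner x not b0}--\ref{lem: runner b0-1}, now adapted to the setting of Notation~\ref{not: m=2^l}. First I make the bead layout of $B$ explicit: since $\gamma = (2^\ell - w, 1^w)$ has first-column hook lengths $\{2^\ell, w, w-1, \ldots, 1\}$, the normalized $2^t$-abacus $B$ has all rows below row $0$ fully packed, all rows above row $0$ empty, and row $0$ occupied exactly at the columns $\{1, 2, \ldots, w, 2^\ell\}$. An application of Lemma~\ref{lem: hooks} then gives $b_\ell = 2^\ell$ in both parity cases, while $b_0 = 2^\ell$ when $w$ is even (so $\gamma \in \Omega^{1+}(\ell; 2^\ell)$ with $c_0(B) = c_\ell(B)$) and $b_0 = 1$ when $w$ is odd (so $\gamma \in \Omega^{1-}(\ell; 2^\ell)$ with $b_\ell = b_0 + 2^\ell - 1$). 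Each $\lambda \in \mathcal{E}(\gamma, 2^t)$ is encoded by a unique $x \in \{0, 1, \ldots, 2^t - 1\}$ specifying the runner of the single down-slide from $B$ to $A$, so the $2^t$ choices of $x$ exhaust $\mathcal{E}(\gamma, 2^t)$.

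Next I run a case analysis on $x$ in the $w$-even case, computing $A^{0,\ell}$ and $A^{\ell,0}$ directly from $A$ using Lemma~\ref{lem:maxhook}(3), Theorem~\ref{theo: INOT}, and Lemma~\ref{abacus-facts}. The outcome will be that $A^{0,\ell} = A^{\ell,0}$ precisely for $x \in \{2^\ell - 1, 2^\ell\}$ (where the slide sits at or just before the common bead $c_0(B) = c_\ell(B)$, so that this bead continues to implement both hook removals in $A$, possibly one row lower) and for $x \in \{0, 2^t - 1\}$ (where the slide is at, or one step above via wrap-around, the target of the $2^\ell$-hook, so that the odd $1$-hook and odd $2^\ell$-hook of $\lambda$ are carried by different beads that do not interfere); for every other $x$, comparing either a row-$0$ position or a row-$(-1)$ position of the two abaci directly exhibits a discrepancy. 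Reading off $r_0(\lambda)$ and $r_\ell(\lambda)$ from $A$ in each commuting case then shows that $x \in \{2^\ell - 1, 2^\ell\}$ places $\lambda$ in $\Omega^{1+}(\ell; n)$ (same hand as $\gamma$), while $x \in \{0, 2^t - 1\}$ places $\lambda$ in $\Omega^{2}(\ell; n)$.

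The $w$-odd case can be deduced by conjugating $\lambda$ and invoking the $w$-even analysis applied to $\gamma' = (w + 1, 1^{2^\ell - w - 1}) \in \Omega^{1+}(\ell; 2^\ell)$, using that conjugation preserves oddness and the relation $f_0 f_\ell = f_\ell f_0$ and swaps types $(1+)$ and $(1-)$ while fixing type $(2)$; alternatively, one runs an entirely parallel abacus analysis with $c_0(B)$ at $(0, 1)$ and $c_\ell(B)$ at $(0, 2^\ell)$. Either way, the commuting values of $x$ come out to be $\{0, 1, 2^\ell, 2^\ell + 1\}$, with $\lambda$ of type $(1-)$ for $x \in \{0, 2^\ell + 1\}$ and of type $(2)$ for $x \in \{1, 2^\ell\}$. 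For the counts, Lemma~\ref{lem: hook types} applied with $n = 2^\ell$ and $\ell = t$ gives $|\Omega^{1+}(\ell; 2^\ell)| = |\Omega^{1-}(\ell; 2^\ell)| = 2^{\ell - 1}$; each of the $2^\ell$ cores $\gamma$ contributes $4$ commuting extensions, split $2 + 2$ between $\lambda$'s of the same type as $\gamma$ and $\lambda$'s of type $(2)$; summing over all cores yields $|\Omega^{1\pm}(\ell; n)| = 2^\ell$, $|\Omega^2(\ell; n)| = 2^{\ell + 1}$, and therefore $T_{0,\ell}(n) = |\Omega(\ell; n)| = 2^{\ell + 2}$.

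The main obstacle is the bookkeeping of the abacus case analysis. Unlike in the preceding lemmas of this subsection, removing a $2^\ell$-hook on the $2^t$-abacus is a long-range move by $2^\ell$ positions in the linear bead order, which within row $0$ crosses several runners and, starting near column $0$, wraps into row $-1$; consequently $c_0(A)$, $c_\ell(A)$, $c_0(A^\ell)$, and $c_\ell(A^0)$ need not sit where $c_0(B)$ and $c_\ell(B)$ do and must be re-identified in each configuration. The commuting cases $x = 2^t - 1$ (for $w$ even) and $x = 2^\ell + 1$ (for $w$ odd) are the most delicate: they lie visibly far from $b_0$ and $b_\ell$, yet commutativity holds precisely because the $2^\ell$-hook removal wraps across the row boundary and then meets the missing bead from the down-slide at $x$. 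Spotting these wrap-around cases and verifying each of the $2^t$ configurations -- even though each individual check is mechanical -- is where the real work of the proof lies.
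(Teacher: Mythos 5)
Your proposal is correct and follows essentially the same route as the paper's proof: an explicit case analysis on the runner $x$ of the single down-slide on the $2^t$-abacus, with one parity of $w$ treated in detail and the other obtained by conjugation (you detail $w$ even where the paper details $w$ odd, an immaterial difference), followed by the same count of $2^{\ell-1}$ cores of each type each contributing two extensions of the core's type and two of type $(2)$. You also correctly isolate the genuinely delicate wrap-around cases ($x=2^t-1$ for $w$ even, $x=2^\ell+1$ for $w$ odd) and the uniform row-$0$/row-$(-1)$ discrepancy argument that rules out all remaining $x$, which is exactly how the paper disposes of the non-commuting cases.
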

\begin{proof}
We consider the $2^t$-abacus of $\lambda$ using the notation introduced above.
Clearly, as $b_0\in \{1,2^\ell\}$ and $b_\ell=2^\ell$, the partition
$\gamma$ is always of type $(1)$; it is of type~$(1-)$, i.e.,
$b_0=1$, if and only if $w$ is odd, by Lemma~\ref{lem: hook types}.

We discuss the case $b_0=1$, i.e., $w$ odd, in detail; the case $b_0=b_\ell$
is similar (and just corresponds to the conjugate partition).

First we want to show that $f_\ell f_0(\lambda)=f_0f_\ell(\lambda)$
when $x\in \{0,1,2^\ell,2^\ell+1\}$.

For $x=0$, $\lambda=(m-w,1^{2^t+w})$, and we easily compute
$f_\ell f_0(\lambda)=(m-w,1^{2^t-m+w-1})=f_0f_\ell(\lambda)$;
we also note that $\lambda\in \Omega^{1-}(\ell;n)$.
For $x=2^\ell +1$, we use the $2^t$-abacus to compute again
$f_0f_\ell(\lambda)=(m-w,1^{2^t-m+w-1})$, which then also equals
$f_\ell f_0(\lambda)$. Again, $\lambda\in \Omega^{1-}(\ell;n)$.

For $x=1$, the bead $c'$ is at position $(1,1)$ in $A$,
$d'$ is at $(0,2^\ell)$, and both target positions $(1,0)$
and $(0,0)$ are empty; then we obtain the $2^t$-abacus
$A^{0,\ell}:=[A(1,\leftarrow c')](2^\ell,\leftarrow d')$
for $f_\ell f_0(\lambda)$ and
the $2^t$-abacus
$A^{\ell,0}:=[A(2^\ell,\leftarrow d')](1,c')$ for $f_0f_\ell(\lambda)$,
and these coincide.
The case $x=2^\ell$ is handled analogously.
In both cases, we easily see that $\lambda\in \Omega^2(\ell;n)$.

Note that in the case where $\gamma\in \Omega^{1+}(\ell;m)$,
i.e., $b_0=b_\ell$, a similar argument gives
two extensions $\lambda\in \Omega(\ell;n)$
of type $(1+)$ and two extensions of type $(2)$ as claimed.

Continuing with the case $b_0=1$, we next
want to show that $f_\ell f_0(\lambda)\ne f_0f_\ell(\lambda)$
when $x\not\in \{0,1,2^\ell,2^\ell+1\}$.

We have three types of extensions $\lambda$, corresponding to
$x\in \{2,\ldots,w\}$ if $w>1$, $x\in \{w+1,\ldots,2^\ell-1\}$
if $w<2^\ell -1$, and $x\in \{2^\ell+2,\ldots,2^t-1\}$ if $(\ell;t)\ne (1,2)$.

In all cases,
the $2^t$-abacus $A^\ell$ has beads at positions $(0,0)$ and $(0,1)$, so also
$A^{\ell,0}$ has this property. But $A^0$ has a gap at position $(0,1)$,
which can then not be filled in $A^{0,\ell}$
as $A^0$ has no bead in position $(0,2^\ell+1)$.
So $A^{\ell,0}\ne A^{0,\ell}$, and the claim follows.
\end{proof}

\medskip

We collect all the counting formulae
for $T_{0,\l}(n)$ in the following
result.
Together with the reduction formula for
the numbers $F_{k\l}(n)$ (so also for $T_{k,\l}(n)$)
in Theorem~\ref{theo: B} we thus have a complete answer to
our counting problem.

\begin{corollary} \label{cor: countfinal}
Let $n=2^{a_1}+\cdots+2^{a_r}$
where $a_1>a_2>\cdots >a_r$,
and let $0<\ell\le a_1$.
\begin{enumerate}
\item\label{cor: count1}
Assume $a_r >\ell$.
Then
$$T_{0,\l}(n)=
|\Omega(\ell;n)|=
2^{a_r-1}\big(\prod_{j=1}^{r-1}(2^{a_j}-3)
+\prod_{j=1}^{r-1}2^{a_j}\big).$$

\item \label{cor: count2}
Assume $\l>a_r$, say $a_p\geq \ell>a_{p+1}$.
Then
$$T_{0,\l}(n)=|\Omega(\ell;n)|=
\big((2^\ell-2)2^{a_p-\ell}\prod_{j=1}^{p-1}2^{a_j}
+2\prod_{j=1}^{p-1}(2^{a_j}-3)\big)
\prod_{j=p+1}^{r}2^{a_j}.$$

\item \label{cor: count3}
Assume $a_r= \ell$.
Then
$$T_{0,\l}(n)=
|\Omega(\ell;n)|=
2^{\ell+1}\big(\prod_{j=1}^{r-2}2^{a_j}
+\prod_{j=1}^{r-2}(2^{a_j}-3)\big).$$
\end{enumerate}
\end{corollary}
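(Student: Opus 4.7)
The plan is to iterate the recursion of Corollary~\ref{cor: iteration counting}, peeling off the largest binary digit of $n$ at each step and stopping as soon as the remainder falls below $2^\ell+1$. Tracking the types separately is essential, since
\[
|\Omega^{1}(\ell;N)|=(2^{a}-3)|\Omega^{1}(\ell;N-2^{a})|, \qquad
|\Omega^{2}(\ell;N)|=2^{a}|\Omega^{2}(\ell;N-2^{a})|
\]
(valid when $2^a$ is the largest binary digit of $N$ and $N-2^{a}\ge 2^\ell+1$) involves different multipliers; one sums the two contributions only at the very end to obtain $T_{0,\ell}(n)=|\Omega^1(\ell;n)|+|\Omega^2(\ell;n)|$.

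For case (i), with $a_r>\ell$, the hypothesis $N-2^a\ge 2^{a_r}>2^\ell$ holds at every step, so the iteration reaches the pure $2$-power $2^{a_r}$. Lemma~\ref{lem: hook types} evaluates this base case as $|\Omega^{1}(\ell;2^{a_r})|=2^{a_r-1}=|\Omega^{2}(\ell;2^{a_r})|$, and multiplying by the accumulated products $\prod_{j=1}^{r-1}(2^{a_j}-3)$ and $\prod_{j=1}^{r-1}2^{a_j}$ respectively, then adding, yields the claimed formula. Case (iii), with $a_r=\ell$, is analogous but the recursion now halts after $r-2$ steps (since $2^\ell\not\ge 2^\ell+1$) at the base $2^{a_{r-1}}+2^\ell$, which is precisely the setting of Lemma~\ref{lem: m=2 to ell} and gives $|\Omega^{1}|=|\Omega^{2}|=2^{\ell+1}$ there; the formula follows by the same multiplication.

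Case (ii) is the delicate one. With $a_p\ge\ell>a_{p+1}$, the iteration halts after $p-1$ steps at $2^{a_p}+m$ with $m=2^{a_{p+1}}+\cdots+2^{a_r}<2^\ell$. The base case is handled by Notation~\ref{not: 3.10} together with Lemmas~\ref{lem: runner x not b0},~\ref{lem: runner b0} and~\ref{lem: runner b0-1}: for each of the $\prod_{j=p+1}^{r}2^{a_j}$ odd cores $\gamma\vdasho m$, the $2^{a_p}$ extensions in $\mathcal{E}(\gamma,2^{a_p})$ split into exactly one element of type $(1+)$, one of type $(1-)$, and $(2^\ell-2)\cdot 2^{a_p-\ell}$ of type $(2)$, the rest being excluded from $\Omega(\ell;\cdot)$. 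Multiplying by $\prod_{j=1}^{p-1}(2^{a_j}-3)$ and $\prod_{j=1}^{p-1}2^{a_j}$ respectively and adding produces the formula (ii). The degenerate sub-case $a_p=\ell$ is absorbed since $2^{a_p-\ell}=1$; the total then collapses to $|\cO(n)|$, in agreement with \cite[Theorem~B]{BGO}.

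The main obstacle is purely bookkeeping: verifying that the iteration terminates at the correct base in each of the three cases, and that the type split furnished by the base-case lemma matches the two accumulated products. Edge cases ($r=1$ in (i), $r=2$ in (iii), and $p=1$ or $a_p=\ell$ in (ii)) need only direct verification against the cited lemmas and introduce no new ideas.
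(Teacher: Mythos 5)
Your proposal is correct and follows essentially the same route as the paper: iterate the recursion of Corollary~\ref{cor: iteration counting} while keeping the counts $|\Omega^1|$ and $|\Omega^2|$ separate, and terminate at the base case $2^{a_r}$ (Lemma~\ref{lem: hook types}), at $2^{a_p}+m$ with $1\le m<2^\ell$ (Lemmas~\ref{lem: runner x not b0}, \ref{lem: runner b0}, \ref{lem: runner b0-1}), or at $2^{a_{r-1}}+2^\ell$ (Lemma~\ref{lem: m=2 to ell}), according to the three cases; the bookkeeping and the type splits you record all match the cited lemmas. One aside in case (ii) is incorrect, though it does not affect the derivation of the formula: when $a_p=\ell$ with $p>1$, the total does \emph{not} collapse to $|\cO(n)|$ --- only the base-case count $|\Omega(\ell;2^\ell+m)|$ equals $|\cO(2^\ell+m)|$, and the accumulated factors $2^{a_j}-3<2^{a_j}$ in the type-$1$ branch still force $T_{0,\ell}(n)<|\cO(n)|$, consistent with \cite[Theorem B]{BGO}, which for $k=0$ and $n$ not a $2$-power requires $\ell=a_1$ (or $(n,\ell)=(6,1)$) for full commutativity.
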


\begin{proof}
(1)
Using Corollary~\ref{cor: iteration counting} in an inductive argument,
we obtain the claimed formula.

(2)
The formula follows from Lemmas~\ref{lem: runner x not b0},
~\ref{lem: runner b0},~\ref{lem: runner b0-1}
and by Corollary~\ref{cor: iteration counting}.

(3)
Together with our previous results, we deduce
the claim using Lemma~\ref{lem: m=2 to ell}.
\end{proof}

\subsection*{Acknowledgements}
Part of this work was carried out while the second and third authors were visiting the
Institute of Algebra, Number Theory and Discrete Mathematics at Leibniz Universit\"at Hannover.
They would like to thank everyone at the institute for the kind hospitality.

\end{document}